\newtheorem{thm}{Theorem}
\newtheorem{main theorem}[thm]{Main Theorem}
\newtheorem{corollary}[thm]{Corollary}
\newtheorem{lemma}[thm]{Lemma}
\newtheorem{prop}[thm]{Proposition}
\newtheorem*{prop*}{Proposition}
\theoremstyle{definition}
\newtheorem{remark}[thm]{Remark}
\newcommand{\bea}{\begin{eqnarray*}}
\newcommand{\eea}{\end{eqnarray*}}
\newcommand{\be}{\begin{equation}}
\newcommand{\ee}{\end{equation}}
\title[Attracting skew products]{Fatou components of attracting skew products}
\author[Han Peters, Iris M. Smit]{Han Peters, Iris Marjan Smit}
\address{KdV Institute for Mathematics\\
University of Amsterdam\\
The Netherlands}
\email{h.peters@uva.nl}
\email{imsmit@gmail.com}
\begin{document}
\bibliographystyle{plain}

\begin{abstract}
We investigate the existence of wandering Fatou components for polynomial skew-products in two complex variables. In 2004 the non-existence of wandering domains near a
super-attracting invariant fiber was shown in \cite{Lilov}. In 2014 it was shown in \cite{ABDPR} that wandering domains can exist near a parabolic invariant fiber. In
\cite{PV2014} the geometrically-attracting case was studied, and we continue this study here. We prove the non-existence of wandering domains for subhyperbolic attracting skew products; this class contains the maps studied in \cite{PV2014}. Using expansion properties on the Julia set in the invariant fiber, we prove bounds on the rate of escape of critical orbits in almost all fibers. Our main tool in describing these critical orbits is a possibly singular linearization map of unstable manifolds.
\end{abstract}

\maketitle

\tableofcontents

\section{Introduction}

Sullivan's No Wandering Domains Theorem \cite{Sullivan} states
that every Fatou component of a rational function $f\colon \hat{\mathbb
C} \rightarrow \hat{\mathbb C}$ is either periodic or
pre-periodic. It was recently shown in \cite{ABDPR} by Astorg,
Buff, Dujardin, Raissy and the first author that Sullivan's
Theorem does not hold for polynomial maps in $\mathbb C^2$. The
maps constructed in \cite{ABDPR} are polynomial skew-products of
the form
$$
(z,w) \mapsto \bigl(f(z,w), g(w)\bigr),
$$
where $f(z,w) = p(z) + q(w)$ and both polynomials $p$ and $g$ have
a parabolic fixed point at the origin. The wandering Fatou
components are contained in the parabolic basin of $g$.

In the current project we investigate whether polynomial
skew-products can also have wandering domains in the basin of an
attracting fixed point of $g$. Throughout the paper, we will
assume that the first coordinate function $f$ of the skew-product
has the form $f(z,w) = z^d + a_{d-1}(w) z^{d-1} + \ldots + a_0(w)$.
Recall that in the paper on polynomial skew-products by Jonsson
\cite{Jonsson} a polynomial skew-product is assumed to extend
holomorphically to $\mathbb P^2$. We do not make this stronger
assumption here. We will prove the following.

\begin{thm}\label{thm:main}
Let $F\colon(z,w)\mapsto (f(z,w),g(w))$ be a polynomial skew product and assume that $0 = g(0)$ is an attracting fixed point with corresponding basin $B_g$. Further assume that the polynomial $p(z) = f(z,0)$ is subhyperbolic. Then $F$ has no wandering Fatou components contained in $B_g$.
\end{thm}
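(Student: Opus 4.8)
The plan is to argue by contradiction, so suppose $U\subseteq B_g$ is a wandering Fatou component of $F$ and write $F^n(x)=(z_n,w_n)$ for $x\in U$. Since $\pi_2\circ F=g\circ\pi_2$ and $\pi_2(U)\subseteq B_g$, we have $w_n\to 0$ locally uniformly on $U$. If the first coordinates $z_n$ were unbounded on $U$, then $U$ would lie in the basin of the super-attracting invariant curve $\{z=\infty\}$ of the extension of $F$ to $\mathbb P^1\times\mathbb C$, which has no wandering Fatou components (an easier situation: fiberwise B\"ottcher coordinates linearize the dominating first coordinate and reduce the dynamics to a quasi-product model); so we may assume that $z_n$ stays bounded on $U$. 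Next, each attracting cycle of $p$ lifts to an attracting cycle of $F$ inside the invariant fiber $\{w=0\}$ — the extra multiplier in the $w$-direction has modulus $<1$ — and for $|w|$ small an orbit of $F$ whose first coordinate lies in a compact part of $\mathcal F(p)$ shadows the corresponding $p$-orbit long enough to enter the basin of one of these cycles. Hence if $z_n(x)$ stayed a definite distance away from $J(p)$ along some subsequence, for some $x\in U$, then $U$ would be (pre-)periodic. Therefore $\operatorname{dist}(z_n,J(p))\to 0$ on $U$, and by equicontinuity of $\{F^n|_U\}$ this holds locally uniformly: there is a bounded neighborhood $W$ of $J(p)$ such that for every relatively compact open $V\Subset U$ one has $F^n(V)\subseteq\mathcal T:=W\times\{|w|<\epsilon\}$, together with all forward iterates, for every $n$ beyond some $n_0(V)$.

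Fix such a $V$ and an $n\ge n_0(V)$. For any $c\in\pi_2(F^n(V))$ the slice $S_{n,c}:=F^n(V)\cap\{w=c\}$ is nonempty and open in the fiber $\{w=c\}$, and its entire forward orbit stays in $\mathcal T$; hence, identifying $\{w=c\}$ with $\mathbb C$, the set $S_{n,c}$ lies in the \emph{fibered filled Julia set of the tube over $c$}, the set of $z\in W$ for which the non-autonomous orbit $z,\ f_c(z),\ f_{g(c)}(f_c(z)),\dots$ remains in $W$. Since $\pi_2(F^n(V))=g^n(\pi_2(V))$ is open and, for $n$ large, contains points arbitrarily close to $0$, the theorem will follow once we prove that for almost every $c$ near $0$ the fibered filled Julia set of the tube over $c$ has empty interior.

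To this end we invoke subhyperbolicity of $p$: there is a conformal metric $\mu$ with finitely many singularities, at the forward orbits of the critical points of $p$ that lie in $J(p)$, defined on $W$ (which, shrinking if necessary, we take so that $p$ is uniformly $\mu$-expanding and so that the only $p$-orbits remaining in $W$ are those of $J(p)$); for $|w|$ small, $f_w$ inherits the same $\mu$-expansion. Were the fibered filled Julia set over some $c$ to have nonempty interior, it would contain a disk $\Delta$ whose forward orbit stays in $W$; the family of first coordinates $\{\pi_1\circ F^n|_\Delta\}$ is then bounded, hence normal, and because $g^k(c)\to 0$ the non-autonomous iterates approach the $p$-dynamics, forcing $\operatorname{dist}(\pi_1(F^n(\Delta)),J(p))\to 0$. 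As $J(p)$ has empty interior, every limit function is constant, so the $\mu$-diameters of the iterates of $\Delta$ tend to $0$ — which, as long as these iterates avoid neighborhoods of the finitely many singular points of $\mu$, contradicts $\mu$-expansion. This is the fiberwise analogue of Lilov's argument, now immune to the contracting $w$-direction; the only way it can fail is if the orbit of $\Delta$ repeatedly approaches the singular set of $\mu$, equivalently if it shadows the preperiodic critical points of $p$ contained in $J(p)$. To control this one studies the orbits of the critical points of $f_c$ starting near those critical points of $p$, by means of a possibly singular linearization of the unstable manifolds over $J(p)\times\{0\}$: it is assembled from the entire Koenigs linearizers of the repelling cycles onto which those critical orbits eventually land, organized into a semiconjugacy to a linear model, and it is singular precisely because $p$ maps critical points into the linearized region. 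Read in these coordinates, the $\mu$-expansion yields the quantitative bound that, for $c$ outside an exceptional (in fact polar, hence Lebesgue-null) set of fibers, the critical points of $f_c$ starting near $J(p)$ escape $W$ within $O(\log\tfrac{1}{|c|})$ steps. This is fast enough that the time the orbit of $\Delta$ can spend near the singular set of $\mu$ cannot compensate for the $\mu$-expansion, so the $\mu$-diameters cannot tend to $0$ after all — the desired contradiction.

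The principal obstacle is this last quantitative estimate: constructing the singular linearization of the unstable manifolds with enough precision to follow a critical orbit through the linearized coordinates, and converting the orbifold expansion on $J(p)$ into the $O(\log\tfrac{1}{|c|})$ escape bound valid for almost every fiber, including the identification of the exceptional polar set of fibers along which a critical orbit lingers too long near the postcritical set. Once these estimates are available, the remaining steps — lifting the attracting basins, the normal-family and empty-interior reduction, and the fiberwise Lilov argument — are essentially perturbations of the classical one-variable subhyperbolic picture.
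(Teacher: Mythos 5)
Your high-level architecture matches the paper in several respects: reduce to the fibers near $\{w=0\}$, show that for almost every fiber no horizontal disk can have its whole forward orbit trapped near $\mathcal{J}_p$, and control the critical orbits of $f_{\lambda^n w_0}$ by means of a (possibly singular) linearization of the unstable manifolds at the repelling points onto which the critical points of $p$ in $\mathcal{J}_p$ land. But the mechanism you propose for the final contradiction --- direct expansion of $f_w$ with respect to the admissible orbifold metric $\mu$ of $p$ --- does not work, and this is precisely the difficulty the paper is built to circumvent. The metric $\mu$ absorbs the criticality of $p$ only because its singularities sit exactly on the postcritical orbit of $p$; for $w\neq 0$ the critical points of $f_w$ are displaced off that orbit and their forward orbits do not return to it, so $f_w$ is genuinely contracting (indeed $f_w'$ vanishes) at points where $\mu$ is smooth, and no uniform ``inherited $\mu$-expansion'' holds. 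Your proposed repair --- that critical points escape $W$ in $O(\log\tfrac{1}{|c|})$ steps, so the lost expansion is negligible --- is asserted without any mechanism for converting an escape-time bound into a bound on accumulated contraction of the disk $\Delta$. The paper's conversion goes through degree counting: the number of critical points of the proper map $F_1^{n_j}$ over a disk avoiding $U\cup W_0$ is bounded by $d^{\#\{\text{bad times}\}}$, and this must be $o(\sqrt{n_j})$ for the Poincar\'e-area estimate (Proposition \ref{prop:area inv}) to close the argument. A bound of $C\log n$ bad times with uncontrolled $C$ gives degree $n^{C\log d}$, which is useless once $C\log d\ge 1$; the paper therefore needs the coefficient of $\log n$ to be \emph{arbitrarily small}, and obtains this only via a second, ergodic-theoretic argument (unique ergodicity of the rotation $s\mapsto sy$ on the torus $\mathbb{C}/(2\pi i\mathbb{Z}+\log\mu\,\mathbb{Z})$, Proposition \ref{prop:dynamics upper bound}). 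Your sketch contains only the analogue of the ``slow escape'' half (Proposition \ref{prop:log n}) and omits this entirely.

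A second concrete gap is the source of the full-measure set of good fibers. The paper obtains it by a Borel--Cantelli argument whose summability hinges on Proposition \ref{prop:area}: the areas of the sublevel sets $V_m=\mathbb{C}\setminus p^{-m}(W_0)$ decay exponentially, which is proved from the Carleson--Jones--Yoccoz estimates for semi-hyperbolic polynomials together with the John-domain/Minkowski-dimension bound on $\mathcal{J}_p$. This is where subhyperbolicity (beyond mere expansion) enters quantitatively, and it is absent from your outline; your claim that the exceptional set of fibers is ``polar, hence Lebesgue-null'' is unsupported and plays no role in the actual proof. The surrounding reductions in your first paragraph (boundedness of the first coordinates, bulging of the attracting basins of $p$, the normal-family argument forcing the orbit toward $\mathcal{J}_p$) are correct and agree with the paper.
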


Note that any Fatou component of a skew-product $F$ must be
contained in a Fatou component of the polynomial $g$. Hence if $F$
has a wandering Fatou component, then by Sullivan's Theorem there
must be a wandering Fatou component that is contained in a
periodic Fatou component of $g$. This component must be either
attracting, parabolic or a Siegel disk. As mentioned before, the
existence of wandering domains in parabolic basins of $g$ was
shown in \cite{ABDPR}. In \cite{Lilov}, Lilov showed that there can
be no wandering Fatou components in a super-attracting basin of
$g$. In this work we will focus on the geometrically attracting
case.

\medskip

Without loss of generality we may assume that $0 = g(0)$ is an attracting fixed point. Note that if we can rule out the existence of wandering domains in a small neighborhood of
$\{w=0\}$, then it follows that there are no wandering domains in the entire basin of attraction of $0$. When working in a small neighborhood of $\{w=0\}$ we can change
coordinates so that $g(w) = \lambda w$, with $0< |\lambda| < 1$. A consequence is that the coefficients $a_0, \ldots a_{d-1}$ of $f$ are generally no longer polynomials, but only
depend holomorphically on $w$. This will not be an issue for us, and in fact our results hold for $g$ holomorphic as well. For simplicity of notation we will continue to assume
that $F(z,w) = (f(z,w) , \lambda w)$ is a polynomial skew-product.

\medskip

The geometrically attracting case was recently studied by Vivas and the first author in \cite{PV2014}. In order to state results from that paper, let us first recall in greater
detail what Lilov proved for the super-attracting case. Lilov first showed that every $1$-dimensional Fatou component of $p(\cdot) = f(\cdot,0)$ is contained in a Fatou
component of $F$, which we will refer to as a \emph{bulging} Fatou component of $p$ (by slight abuse of language). Then, Lilov showed that the forward orbit of any nearby horizontal disk must intersect a
bulging Fatou component of $p$. By horizontal disk we mean a disk contained in a fiber $\{w = w_0\}$, and by nearby we mean that $w_0$ is contained in the basin of $\{w = 0\}$.
The non-existence of wandering Fatou components in a super-attracting basin of $g$ follows immediately.

\medskip

The bulging of Fatou components holds in the geometrically attracting case as well. However, in \cite{PV2014} explicit attracting polynomial skew-products were constructed for
which there are nearby horizontal disks whose forward orbits avoid the bulging Fatou components of $p$. What we show in the current work is that in \emph{most} fibers such disks
cannot exist.

\begin{prop}\label{prop:main}
Consider a skew-product
$$
F(z,w) = \bigl(f(z,w), \lambda w\bigr)
$$
satisfying the conditions of Theorem \ref{thm:main}, with
$0<|\lambda|<1$. Then there is a set $E \subset \mathbb{C}$ of
full measure, such that for every $w_0 \in E$ the
forward orbit of every disk in the fiber $\{w = w_0\}$ must
intersect a bulging Fatou component of $p$.
\end{prop}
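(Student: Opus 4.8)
The plan is to argue by contradiction. Suppose some disk $D\subset\{w=w_0\}$ has a forward orbit avoiding every bulging Fatou component; I will show that $D$ must shadow the Julia set $J_p$, that subhyperbolicity then forces the iterates of $D$ to keep colliding with critical points of the fiber maps, and that such collisions can fail to carry $D$ into a bulging component only when $w_0$ lies in a set of measure zero. By shrinking $D$ so that its first $k$ iterates avoid the critical set and replacing $D$ by a small disk inside $F^k(D)$, it suffices to treat fibers with $|w_0|$ arbitrarily small; the general statement then follows by applying the special case in a fiber $\{w=\lambda^nw_0\}$ with $n$ large, since the set of $w_0$ for which $\lambda^nw_0$ eventually lands in the good set has full measure. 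Because the basin of $\infty$ bulges and every bounded Fatou component of $p$ bulges, and because the fiber slices of these bulging components exhaust the Fatou set of $p$ as $w\to 0$, avoidance of the bulging components forces the $z$-projection of $F^n(D)\subset\{w=\lambda^nw_0\}$ into an $\varepsilon_n$-neighborhood of $J_p$ with $\varepsilon_n\to0$, while $F^n(D)$ itself stays outside every bulging component.

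Subhyperbolicity of $p$ supplies an orbifold metric $\rho$ on a neighborhood $U$ of $J_p$ in which $p$ is uniformly expanding; for $|w|$ small, $f(\cdot,w)$ is still $\rho$-expanding on $U$ at points at definite $\rho$-distance from its critical points. The only critical points of $f(\cdot,w)$ that can lie in $U$ are the perturbations $c_j(w)$ of the finitely many critical points $c_j(0)$ of $p$ that lie in $J_p$ --- the others sit inside bulging components, hence are avoided by $F^n(D)$. A standard ``expand until obstructed'' argument then shows that if $F^n(D)$ eventually stayed at definite $\rho$-distance from every relevant $c_j(\lambda^nw_0)$, its $\rho$-size would grow past the $\rho$-diameter of a Fatou component of $p$ and $F^n(D)$ would meet a bulging component, a contradiction. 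Hence there is an index $j$ with $c_j(0)\in J_p$ for which $F^n(D)$ comes $\rho$-close to $c_j(\lambda^nw_0)$ along a set of times $n$ of positive lower density --- dense enough to defeat the expansion factor.

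At such a collision time $n$, the disk $F^n(D)$ --- already repeatedly folded at critical points --- is very thin near the new one, so it contains a point within $\delta_n$ of $c_j(\lambda^nw_0)$ with $\delta_n$ decaying geometrically in $n$; consequently $F^{n+m}(D)$ contains a point within $C^m\delta_n$ of the critical orbit point $F^m(c_j(\lambda^nw_0),\lambda^nw_0)$, which stays small over a window $0\le m\le M(n)$ with $M(n)$ growing linearly in $n$. If within this window the $j$-th critical orbit starting from the fiber $\{w=\lambda^nw_0\}$ entered a bulging component, then $F^{n+m}(D)$ would meet it, a contradiction; so at infinitely many collision times that orbit must survive in $U$ beyond step $M(n)$. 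To bound how often this can occur I would use the (singular) linearization of unstable manifolds: each $c_j(0)\in J_p$ is strictly preperiodic for $p$, landing on a repelling cycle through a point $\zeta_j$, so $(\zeta_j,0)$ is a saddle periodic point of $F$ whose unstable manifold carries a linearization $\phi_j$ conjugating the return map to multiplication by the multiplier $\mu_j$; $\phi_j$ is possibly singular precisely because $\zeta_j$ is the landing point of a critical orbit, so the globalized linearization ramifies over the postcritical set. Tracking $F^m(c_j(w),w)$ for small $w$ through $\phi_j$ --- after a bounded transient it enters a $w$-dependent neighborhood of $(\zeta_j,0)$ and then escapes along the unstable manifold, the linearizing coordinate turning the $w$-dependence of the escape time and exit position into an explicit power law --- one obtains an estimate of the form: the set of $w$ near $0$ for which the $j$-th critical orbit stays in $U$ (equivalently, has not entered a bulging component) for at least $T$ steps has measure at most $\eta(T)$, with $\eta$ decaying sufficiently fast. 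Applying this with $T=M(n)$, summing over the finitely many relevant $j$ and over the geometric sequence $(\lambda^nw_0)_n$, and invoking Borel--Cantelli confines $w_0$ to a null set; its complement is the desired set $E$.

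The main obstacle is this last step: constructing and linearizing the unstable manifolds over the postcritical saddles, controlling the singularities of the maps $\phi_j$, extracting a clean quantitative escape-rate estimate that is uniform in $j$, and checking that it beats the linearly growing window $M(n)$ over which the repeatedly folded disk $F^n(D)$ must track the escaping critical orbit. Balancing these is exactly the competition, peculiar to the geometrically attracting case, between the geometric expansion near $J_p$ and the geometric contraction toward $\{w=0\}$; it is why the exceptional set --- empty in Lilov's super-attracting setting --- is here only of measure zero, and why the critical orbits shadowing $J_p$ constructed in \cite{PV2014} are not excluded outright but merely for $w_0$ off a null set.
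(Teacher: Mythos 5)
Your proposal correctly identifies two of the paper's key tools --- the (possibly singular) linearization of the unstable manifolds at the saddles over the preperiodic critical points, and a Borel--Cantelli argument to carve out the full-measure set $E$ --- but the mechanism you build around them has a gap that I believe is fatal as stated. Your plan hinges on the dichotomy: at a collision time $n$, either the critical orbit starting at $(c_j(\lambda^n w_0),\lambda^n w_0)$ enters a bulging component within a window $M(n)$ growing linearly in $n$ (contradiction via tracking), or $w_0$ lies in a set whose measure is controlled by an escape-rate estimate $\eta(T)$ with $T=M(n)$. But the second alternative is not a small-measure event: for \emph{every} $w$ near $0$, the critical orbit of $c_j(w)$ with $c_j(0)\in\mathcal J_p$ shadows the finite postcritical orbit of $p$, converges toward the saddle $(x_r,0)$, and only then escapes along the unstable manifold; the linearization itself shows the escape time is comparable to $\log(1/|w|)$, i.e.\ linear in $n$ when $w=\lambda^n w_0$ (in the resonant case of Section 4 it is exactly $\sim n$). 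So the set of $w$ for which the orbit has not entered a bulging component after $T$ steps contains an entire disk of radius $e^{-cT}$, $\eta(T)$ cannot decay fast enough to beat a linearly growing window after rescaling by $\lambda^{-n}$, and Borel--Cantelli yields nothing. A second unsupported step is the claim that $F^n(D)$ contains a point within a geometrically decaying $\delta_n$ of the critical point; ``repeated folding'' does not give this.

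The paper closes exactly these gaps differently. First, it only counts critical points whose orbit at time $n$ lies \emph{outside} a neighborhood $U$ of the postcritical set --- legitimate because subhyperbolicity lets one take the limit point $\zeta$ of $F^{n_j}(D)$ off the postcritical set --- so the long sojourn near the saddle is harmless; the linearization plus an equidistribution argument on a torus then gives the sublinear bound $O(\sqrt{n_j})$ on the degree of the proper map $F_1^{n_j}\colon O_j\to D(\zeta,r)$ (Proposition \ref{prop:final critical pointcount}). Second, instead of pointwise tracking, the contradiction comes from area: semi-hyperbolicity makes $I_\infty$ a John domain, whence $\mathrm{Area}(V_m)$ decays exponentially (Proposition \ref{prop:area}), so $\mathrm{Area}(F^n(D))$ decays exponentially, and the hyperbolic-area estimate for degree-$d$ proper maps (Proposition \ref{prop:area inv}) forces the Poincar\'e area of $D$ to be $0$. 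You would need substitutes for both of these quantitative inputs --- the exclusion of $U$ from the degree count and the area/degree trade-off --- before your outline could be completed.
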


In particular the set $E$ is dense, so Theorem \ref{thm:main} follows immediately.
Another stronger corollary is that the only Fatou components of
$F$ are the bulging Fatou components of $p$. This follows
immediately from the fact that the topological degree of $F$
equals the degree of $p$, hence the only Fatou components that can
be mapped onto the bulging Fatou components of $p$ are those
bulging Fatou components.

\medskip

The skew-products constructed in \cite{PV2014} are of the form
$$
F(z,w) = \bigl(p(z) + q(w), \lambda w\bigr),
$$
where $p$ has only a single critical point which lies in the Julia set and is pre-periodic. Hence the maps from \cite{PV2014} satisfy the conditions in Theorem \ref{thm:main}.

\medskip

Let us emphasize here that the proof of Sullivan's No Wandering Domains Theorem relies in an essential way on the Mapping Theorem for quasiconformal maps. There is no known analogue for the Mapping theorem that can be used to prove the non-existence of wandering domains in higher dimensions. In light of the construction of wandering domains in \cite{ABDPR}, it seems unlikely that this line of argument can be used even for polynomial skew-products. For general one-dimensional polynomial or rational functions there is no alternative for the use of quasiconformal deformations to prove the non-existence of wandering domains.

However, under additional assumptions on the post-critical set there do exist alternative proofs. The easiest class is given by the hyperbolic polynomials, for which the forward orbits of all critical points stay bounded away from the Julia set. In this case a sufficiently large iterate of the map is expanding on the Julia set, which immediately implies the non-existence of wandering Fatou components. It was pointed out by Lilov in \cite{Lilov} that an attracting or super-attracting skew-product acting hyperbolically on the invariant fiber has no wandering Fatou components. In this article we consider skew-products satisfying a weaker assumption, namely that the polynomial acting on the invariant fiber is subhyperbolic.

\medskip

In the subhyperbolic case, sometimes referred to as Misiurewicz, the polynomial is assumed to have no parabolic periodic points, and the critical points that lie on the Julia set are pre-periodic. In fact, these assumptions can be taken as the definition of subhyperbolicity, an alternative definition being that the polynomial is expanding with respect to a so-called admissible metric.

Let us recall a classical argument due to Fatou to show that a subhyperbolic polynomial $p$ does not have wandering Fatou components. Imagine that it does, and let $K$ be a compact subset of such a wandering Fatou component. Then for some subsequence $(n_j)$ the sets $f^{n_j}(K)$ must converge to a point $x \in \mathcal{J}_p$ that does not lie in the post-critical set. Let $U$ be a disk centered at $x$ that does not intersect the post-critical set. Then on $U$ all inverse branches of every iterate $p^n$ are conformal, and necessarily map $U$ inside the disk of radius $R$, the escape radius. It follows that as the Poincar\'e diameter of $f^{n_j}(K)$ in $U$ converges to $0$, the Poincar\'e diameter of $K$ in $D_0(R)$ must also converge to $0$, which gives a contradiction.

\medskip

We will follow the same line of argument in this paper, the main difference being that instead of having no critical points enter the disk $U$, we obtain sub-linear estimates on the degree of the inverse branches. This will turn out to be sufficient to conclude the non-existence of wandering domains.

\medskip

The outline of the proof of Proposition \ref{prop:main} is discussed in the next section. The details are covered in Sections \ref{section: Linearization maps} through
\ref{section:areas}. In Section \ref{section:resonant} we discuss the proof under stronger assumptions, close to the assumptions used in \cite{PV2014}, in which case a much shorter argument is obtained.

\section{Outline and background} \label{section:outline}

Throughout the rest of this paper, we work with a map
$$
F(z,w)=\bigl(f(z,w),\lambda w\bigr),
$$
where $|\lambda|<1$, $f(z,w)=z^d+a_{d-1}(w)z^{d-1}+\ldots +a_0(w)$ and
$p(z)=f(z,0)$. We will write $F^n$ for the $n$-th iterate
of $F$, and we will use the convenient notation
$$
F^n(z,w) = \bigl(F^n_1(z,w), F^n_2(z,w)\bigr).
$$

We assume that the polynomial $p$ is subhyperbolic. A consequence is that the Fatou set consists of a finite collection of attracting basins, and that the orbit of each critical point in the Fatou set converges to one of the attracting cycles. Every critical point in $\mathcal{J}_p$ is eventually mapped onto a repelling periodic point. Passing to some iterate of $F$ and $p$, we may assume that all our critical points in $\mathcal{J}_p$ are not just preperiodic, but will eventually be mapped onto repelling fixed points.

\medskip

A subhyperbolic polynomial is in particular also
\emph{semi-hyperbolic}. Recall that a polynomial $p$ is called
semi-hyperbolic if the Julia set does not contain parabolic
periodic points or recurrent critical points. It is an interesting
question whether the methods introduced in this article can be
used to prove Theorem \ref{thm:main} under the more general assumption that $p$ is
semi-hyperbolic.

\medskip

The main idea in the proof of Proposition \ref{prop:main} is that the fibers $\{w = w_0\}$ in $E$ are chosen such that the critical points in the fibers $\{w = \lambda^n w_0\}$ escape a given neighborhood of the Julia set sufficiently fast under iterations of $F$. To be more precise, fix $R>0$ large enough
such that
$$
|z| > R \; \; \mathrm{implies \; that} \; \; |p(z)| > 2|z|.
$$
For each attracting periodic point $y$ of $p$, fix an open neighborhood $W_y$ of its orbit such that $\overline{p(W_y)} \subset W_y$.
Define
$$
W_0=\{|z|>R\} \cup \bigcup_{y}W_y,
$$
where we take the union over all attracting periodic points $y \in \mathbb{C}$.

Since $p$ has only finitely many attracting periodic orbits, we can find $\epsilon>0$ be such that the
set
$$
W:=W_0 \times D(0,\epsilon)\subseteq \mathbb{C}^2
$$
satisfies $\overline{F(W)} \subseteq W$. Fix a constant $M$ such that $|DF| \leq M$ on
$D(0,R+1) \times D(0,\epsilon)$.

Define the sets $U_m=p^{-m}(W_0)$ and write $V_m$ for their complements. By slight abuse of
notation we consider these as subsets of $\mathbb{C}_z =
\mathbb{C} \times \{0\} \subset \mathbb C^2$ as well. Notice that
$$
\mathcal{F}_p = \bigcup_{m \in \mathbb{N}} U_m.
$$
By definition, points in $U_m$ must escape to $W_0$
in at most $m$ steps. An
elementary computation shows that a similar statement holds for points
sufficiently close to $U_m$:

\begin{lemma}\label{lemma:Cirkelverlaten}
There exist $C_1>0$ such that for all $m
\in \mathbb N$ the following holds:
$$
\text{If }\textrm{d}\bigl((z,w),U_m \bigr) < C_1 M^{-m}, \quad
\text{then }F^{m+1}(z,w) \in W.
$$
\end{lemma}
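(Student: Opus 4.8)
The plan is to track the forward $F$-orbit of $(z_0,w_0)$ by comparing it with the $p$-orbit, in the invariant fiber, of a nearby point $z'\in U_m$, controlling the divergence of the two orbits by the Lipschitz constant $M$; once the $p$-orbit lands in $W_0$ one checks that the perturbed orbit has entered $W$, and then invokes $\overline{F(W)}\subseteq W$.

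In detail: pick $z'\in U_m=p^{-m}(W_0)$ with $\mathrm{d}\bigl((z_0,w_0),(z',0)\bigr)<C_1 M^{-m}$, and write $(z_j,w_j)=F^j(z_0,w_0)$, $\zeta_j=p^j(z')$, so that $F^j(z',0)=(\zeta_j,0)$ and $w_j=\lambda^j w_0$. Let $k\in\{0,\dots,m\}$ be the first index with $\zeta_k\in W_0$; it exists because $\zeta_m\in W_0$. Throughout use the norm $\|(z,w)\|=|z|+|w|$, equivalent to the Euclidean one so that the statement is unaffected after a harmless rescaling of $C_1$, enlarge $M$ if necessary so that $M\ge 1$, and assume $C_1<\min\{1,\epsilon\}$. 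The first step is to prove, by induction on $j$ for $0\le j\le k$, the estimate $\|(z_j,w_j)-(\zeta_j,0)\|\le C_1 M^{\,j-m}$; in particular $\|(z_k,w_k)-(\zeta_k,0)\|\le C_1$ and $|w_j|\le|w_0|<C_1<\epsilon$ for every $j$. The base case is the hypothesis. For the step, when $j<k$ we have $\zeta_j\notin W_0$, hence $|\zeta_j|\le R$; since $\|(z_j,w_j)-(\zeta_j,0)\|\le C_1<1$, both points, and the segment joining them, lie in $D(0,R+1)\times D(0,\epsilon)$, so the mean value inequality gives $\|(z_{j+1},w_{j+1})-(\zeta_{j+1},0)\|\le M\,\|(z_j,w_j)-(\zeta_j,0)\|\le C_1 M^{\,j+1-m}$, using $F(\zeta_j,0)=(\zeta_{j+1},0)$.

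The second step is to show that $(z_j,w_j)\in W$ for some $j$ with $k\le j\le k+1$; this finishes the proof, since then $F^{m+1}(z_0,w_0)=F^{\,m+1-j}(z_j,w_j)\in W$ because $j\le k+1\le m+1$ and $\overline{F(W)}\subseteq W$. I distinguish cases according to how the comparison orbit enters $W_0$. If $|\zeta_k|>R$ and $|z_k|>R$, then $(z_k,w_k)\in\{|z|>R\}\times D(0,\epsilon)\subseteq W$. If $|\zeta_k|>R$ but $|z_k|\le R$, then $|\zeta_k|\le|z_k|+C_1\le R+C_1<R+1$, so one further mean value step along a segment inside $D(0,R+1)\times D(0,\epsilon)$ gives $\|(z_{k+1},w_{k+1})-(\zeta_{k+1},0)\|\le C_1 M$; since $|\zeta_{k+1}|=|p(\zeta_k)|>2|\zeta_k|>2R$, this forces $|z_{k+1}|>2R-C_1 M>R$ provided $C_1<R/M$, so again $(z_{k+1},w_{k+1})\in W$. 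Finally, if $|\zeta_k|\le R$ then necessarily $\zeta_k\in\bigcup_y W_y$, hence $\zeta_{k+1}=p(\zeta_k)\in\bigcup_y\overline{p(W_y)}$, which is a compact subset of the open set $W_0$ (the $W_y$ may be taken bounded); writing $\delta_1=\mathrm{dist}\bigl(\bigcup_y\overline{p(W_y)},\mathbb C\setminus W_0\bigr)>0$, the same extra mean value step gives $\|(z_{k+1},w_{k+1})-(\zeta_{k+1},0)\|\le C_1 M<\delta_1$ provided $C_1<\delta_1/M$, whence $z_{k+1}\in W_0$ and $(z_{k+1},w_{k+1})\in W$. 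Thus $C_1:=\tfrac12\min\{1,\epsilon,R/M,\delta_1/M\}$ works (omit $\delta_1$ if $p$ has no attracting cycles).

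The main obstacle is exactly this second step. When the comparison orbit $\zeta$ first meets $W_0$ at its boundary — modulus barely above $R$, or just inside some $W_y$ — the estimate $\|(z_k,w_k)-(\zeta_k,0)\|\le C_1$ alone does not place $(z_k,w_k)$ in $W_0$, and one must take one further iterate, at the cost of an extra factor $M$, and exploit the strict expansion $|p|>2|{\cdot}|$ off $D(0,R)$ or the strict invariance $\overline{p(W_y)}\subset W_y$ to recover a definite margin. That extra factor $M$, together with the margins $R$ and $\delta_1$, is what fixes the form of $C_1$; the remaining points (the norm bookkeeping, the boundedness of the $W_y$) are routine.
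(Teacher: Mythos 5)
Correct. The paper gives no written proof of this lemma (it is dismissed as ``an elementary computation''), and your argument --- shadowing the orbit of $(z,w)$ by the $p$-orbit of a nearby point of $U_m$ via the Lipschitz bound $M$ on $D(0,R+1)\times D(0,\epsilon)$, then spending one extra iterate to convert mere proximity to $\partial W_0$ into definite membership in $W$, using the expansion $|p(z)|>2|z|$ off $D(0,R)$ and the strict invariance $\overline{p(W_y)}\subset W_y$ --- is exactly the intended computation, carried out correctly.
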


Since $p$ is subhyperbolic, it is also semi-hyperbolic. These polynomials were studied by
Carleson, Jones and Yoccoz in \cite{CJY1994}. In Section
\ref{section: John Domains} we will use their estimates to prove the following:

\medskip

\noindent{\bf Proposition \ref{prop:area}.} \emph{ The area of the
sets $V_m$ decreases exponentially with $m$. }

\medskip

We note that this estimate does not hold for general polynomials;
for example, it does not hold for polynomials with a parabolic
periodic point. As the exponential decay of these areas will play
a crucial role in our proof, it is clear that one should not
expect our proof to work for polynomials $p$ that are not
semi-hyperbolic.

\medskip

Let $U\subseteq \mathbb{C}$ be any neighborhood of the
post-critical set of $p$. Using Proposition \ref{prop:area} we
will be able to choose fibers $\{w = w_0\}$ for which all critical
points in the fibers $\{w = \lambda^n w_0\}$ escape to $W$,
with bounds on the number of steps it takes for the orbit of such
a critical point to leave $U \times \mathbb{C}$ and land in $W$.
As a consequence we will obtain the following:

\medskip

\noindent {\bf Proposition \ref{prop:final critical pointcount}.} \emph{
There exists a set $E \subset \mathbb{C}$, of full measure in some neighborhood of the origin, with the following property:
For all $w \in E$ there exists a constant $C_2 = C_2(w,U)$ such that for all $n \in \mathbb{N}$ we have
$$
\#\left\{z:~ \frac{\partial F_1^n}{\partial z}(z,w)=0 \mathrm{\;
and \;} F_1^n(z,w) \notin W_0 \cup U  \right\} \leq C_2
\sqrt{n}.
$$
}

\medskip

Recall that each critical point $x \in \mathcal{J}_p$ of $p$ is eventually mapped to a
repelling fixed point $x_r$, with multiplier $\mu$ where $|\mu|>1$. The main tool
for controlling the orbits of the critical points of the
polynomial skew product is a linearization map of the unstable
manifold of this repelling fixed point, given by a map $\Phi\colon
\mathbb C \rightarrow \mathbb C$ that satisfies $\Phi(\mu t) = p^k
\circ \Phi(t)$ for some $k \in \mathbb{N}$. The construction of these linearization maps will
be discussed in Section \ref{section: Linearization maps}.

\medskip

The bound on the number of critical points provided by Proposition \ref{prop:final critical pointcount} gives us degree estimates for some proper holomorphic maps between
hyperbolic Riemann surfaces, which in turn can be used to obtain area-estimates, using the following proposition, which will be proved in Section \ref{section:areas}.

\medskip

\noindent {\bf Proposition \ref{prop:area inv}.} \emph{ There
exists a uniform constant $C_3>0$ such that the following holds:
Let $f\colon\mathbb D \rightarrow \mathbb D$ be a proper holomorphic
map of degree $d$, and let $R \subseteq \mathbb{D}$ have
Poincar\'{e} area $A$. If $d\cdot A^{1/2d}<1/8$, then the
Poincar\'e area of $f^{-1}(R)$ will be at most $C_3 d^3 A^{1/d}$.
}

\medskip

With these ingredients we are ready to prove our main results.

\medskip

\noindent {\bf Proof of Proposition \ref{prop:main}.} Let $E$ be
as in Proposition \ref{prop:final critical pointcount}, and
suppose for the purpose of a contradiction that a fiber $\{w =
w_0\}$, where $w_0 \in E$, does contain a disk $D$ whose forward
orbit avoids the bulging Fatou components of $p$. Then the
restriction of $F^n$ to $D$ is bounded and hence a normal family.
Therefore, there exists a subsequence $F^{n_j}$ such that
$F^{n_j}|_D$ converges, uniformly on compact subsets of $D$, to a
holomorphic map to $\mathcal{J}_p$. As $\mathcal{J}_p$ has no
interior, $F^{n_j}(D)$ must in fact converge to a point $\zeta \in
\mathcal{J}_p$. After shrinking $D$ if necessary, we may assume
that the convergence of $F^{n_j}(D)$ to $\zeta$ is uniform.

\medskip

In particular, as $F(W) \subseteq W$, we know that $F^n(D)$ will
never intersect $W$. It therefore follows from Lemma
\ref{lemma:Cirkelverlaten} that whenever $|\lambda^n w_0|  < C_1
M^{-m}$, we have $F_1^n(D) \subseteq V_m$. By Proposition
\ref{prop:area}, we know that $\textrm{Area}(V_m)$ decreases
exponentially. Hence there exist constants $C > 1$, and $\gamma <
1$ such that
\begin{equation}\label{eq:area F^n(D)}
\mathrm{Area}\bigl(F^n(D)\bigr) \leq C \gamma^n.
\end{equation}

By our assumption that the critical points in $\mathcal{J}_p$ are
all eventually mapped onto repelling fixed points, we may
assume that $\zeta$ does not lie in the post-critical set. So
choose our open neighbourhood $U \subseteq \mathbb{C}_z$ of the
post-critical set in such a way that $\zeta \notin U$, and let
$r>0$ be such that $D(\zeta,r) \cap (U \cup W_0)= \emptyset$. Let $J_1 \in
\mathbb N$ be such that $F_1^{n_j}(D) \subseteq
D(\zeta,\frac{r}{2})$ for all $j \geq J_1$. Define $O_j$ to be the
connected component of $\left( F^{n_j}
\right)^{-1}\left(D(\zeta,r)\times \{\lambda^{n_j} w_0\}\right)$
that contains $D$. Then we have $D \subseteq O_j \subseteq
D(0,R)\times \{w_0\}$, and we can regard $F_1^{n_j} \colon O_j
\rightarrow D(\zeta,r)$ as a one-dimensional proper holomorphic
map.

\medskip

Proposition \ref{prop:final critical pointcount} tells us that
this map has at most $d_j=C_2\sqrt{n_j}$ critical points. We are
now in position to apply Proposition \ref{prop:area inv}. Set
$R_j=F_1^{n_j}(D)$, with Poincar\'{e} area
$A_j=\text{Area}_{D(\zeta,r)}(R_j)$ with respect to $D(\zeta,r)$.
As $j \geq J_1$, we have $R_j\subseteq D(\zeta,\frac{r}{2})$.
Hence we can estimate $A_j$ using our result on the Euclidean area
of $R_j$ as found in Equation \eqref{eq:area F^n(D)}:
$A_j<C'\gamma^{n_j}$ for some uniform constant $C'$. Then
$$
d_j A_j^{1/2d_j}< C_2 \sqrt{n_j} \left( C' \gamma^{n_j}
\right)^{1/2C_2 \sqrt{n_j}}<C_2(C')^{1/2C_2 \sqrt{n_j}} \sqrt{n_j}  \gamma^{\sqrt{n_j}/2C_2}.
$$
Since $\gamma<1$, this expression converges to zero as $j$ increases. Therefore we can find $J_2>J_1$ such
that $d_j A_j^{1/2d_j}<1/8$ for all $j>J_2$.

In this setting, Proposition \ref{prop:area inv} implies that
$$
\text{Area}_{D(0,R)}(D) \leq \text{Area}_{O_j}(D) \leq C_3 d_j^3 A_j^{1/d_j} < C_3C_2^3 n_j^{3/2} \left(C' \gamma^{n_j} \right)^{1/C_2 \sqrt{n_j}}.
$$
However, this last expression will also tend to zero as $j$
increases, which gives a contradiction. \hfill $\square$

\medskip

For the rest of this paper, our goal is to prove Propositions \ref{prop:area}, \ref{prop:final critical pointcount} and \ref{prop:area inv}. First, we define the linearization
map $\Phi$ that is used to describe the behavior of the critical points in the Julia set. In Section \ref{section:resonant} we prove our results for the special class of maps studied by Vivas and the first author in \cite{PV2014}. In this setting the proof is much shorter, and we will give a more precise description of the post-critical orbits.

In Section \ref{section: John Domains}, we get back to the general case and prove Proposition \ref{prop:area}. This estimate will be used to prove Proposition \ref{prop:final
critical pointcount} in the next section. And finally, we prove Proposition \ref{prop:area inv} in Section \ref{section:areas}.

\section{Linearization maps}\label{section: Linearization maps}

In this section we construct the linearization map $\Phi$ that
will later help us track the orbits of the critical points of $F$.
As $F(z,w)=\left( f(z,w), \lambda w\right)$ with $f(z,w)=
z^d+a_{d-1}(w)z^{d-1}+ \cdots + a_0(w)$, the critical points of
$F$ are those points where $\frac{\partial f(z,w)}{\partial z}=0$.

In a small neighborhood $\{|w|<\epsilon \}$ of the
$\{w=0\}$-fiber, these critical points will form finitely many,
possibly singular, varieties $K_1$, \ldots, $K_q$, each of which
intersects $\{w=0\}$ in a single point. Let $K$ be such a variety, with intersection $(x_0,0)$ with $\{w=0\}$. If $x_0 \in \mathcal{J}_p$, then $x_0$ must be eventually mapped to a repelling fixed point $x_r=p^r(x_0)$ of $p$, which is a saddle point for $F$. The variety $F^r(K)$ will pass through this saddle point. The linearization results
in this section will help us study the orbits originating on these varieties.

\bigskip

Let $G\colon (\mathbb C^2,0) \rightarrow (\mathbb C^2,0)$ be a
holomorphic germ, and assume that $0$ is a saddle fixed point.
Without loss of generality we may assume that the stable direction
is $(0,1)$ and the unstable direction is $(1,0)$, so that $G$ is
of the form
$$
G\colon(z,w) \mapsto \bigl(\mu z + h.o.t., \lambda w + h.o.t.\bigr),
$$
with $|\mu|>1$ and $|\lambda| < 1$. After changing coordinates we
may assume that, possibly on a smaller neighborhood of the origin,
the stable and unstable manifolds are equal to respectively
$\mathbb C_w=\{0\} \times \mathbb{C}$ and $\mathbb C_z=\mathbb{C}
\times \{0\}$.

\medskip

Let $V$ be an irreducible analytic variety through the origin. We
can locally write $V$ as
$$
V = \bigl\{\left(\gamma_1(t), \gamma_2(t)\right)\bigr\},
$$
where
$$
\gamma_1(t) = a t^k + h.o.t. \; \; \; \mathrm{and} \; \; \;
\gamma_2(t) = b t^l + h.o.t..
$$

\begin{remark} When $\gamma_1=0$, the variety $V$ coincides with
the stable variety $\{z=0\}$ of $G$. In this case,
$(G^n(\gamma(t)))_{n \in \mathbb{N}}$ will converge to the origin
along the line $\{z=0\}$, uniformly on compact subsets.
\end{remark}

When $\gamma_1 \neq 0$, the order $k$ of the parametrization is well-defined. We will
prove the following proposition:

\begin{prop}\label{prop:general linearization}
The maps $\Phi_n$ defined by
$$
\Phi_n(t) = G^{kn}\left(\gamma_1(\mu^{-n}t),
\gamma_2(\mu^{-n}t)\right)
$$
converge uniformly on compact subsets of $\mathbb C$ to a
holomorphic map
$$
\Phi\colon \mathbb C \rightarrow \mathbb C_z
$$
of local order $k$ that satisfies the functional equation
$G^k\circ \Phi(t) = \Phi(\mu t)$. This convergence is
exponentially fast on compact subsets.
\end{prop}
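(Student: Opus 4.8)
The plan is to work in local coordinates where the stable and unstable manifolds are the coordinate axes, and to exploit the fact that $G$ restricted to the unstable manifold $\mathbb{C}_z$ is a one-dimensional germ with multiplier $\mu$, $|\mu|>1$. First I would recall that by Koenigs' linearization theorem applied to $G|_{\mathbb{C}_z}$, the iterates $z \mapsto G^n(\mu^{-n}z, 0)$ converge uniformly on compact subsets to the Koenigs linearizer $\psi$ of the repelling germ; the point is to show that the presence of the second coordinate $\gamma_2(t) = bt^l + \text{h.o.t.}$ does not destroy this convergence, because the $w$-coordinate is being contracted. So the first step is a careful estimate: for $t$ in a fixed compact set $\overline{D(0,\rho)}$, I want to show that the point $(\gamma_1(\mu^{-n}t), \gamma_2(\mu^{-n}t))$ lies in a small bidisc where $G$ is close to its linear part, and that under iteration the $w$-coordinate stays dominated (it decays like $|\lambda|^{j}$ roughly, times the initial size $|\mu^{-n}t|^{l}$), while the $z$-coordinate is governed by the one-dimensional dynamics.

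The key steps, in order: (1) Choose a neighborhood $U$ of $0$ and constants so that on $U$ we have $\|G(z,w) - (\mu z, \lambda w)\| \le C(|z|^2 + |w|^2)$ and $|\mu|^{-1} < \theta < 1$, $|\lambda| < \theta$. (2) Show by induction on $j$ that for $n$ large and $|t| \le \rho$, writing $G^j(\gamma_1(\mu^{-n}t), \gamma_2(\mu^{-n}t)) = (z_j, w_j)$, one has $|w_j| \le C' \theta^{\max(j, n)} $ or some such bound that makes the $w$-contribution to the $z$-dynamics summable; the worst case is when $j \le n$, where both $|z_j|$ and $|w_j|$ are still small and the error terms accumulate. (3) Compare the actual orbit $z_{kn}$ (after $kn$ steps) with the orbit of the pure one-dimensional system $\tilde z_{j+1} = g(\tilde z_j)$ where $g = G_1(\cdot, 0)$, started from $\gamma_1(\mu^{-n}t)$; the difference satisfies a recursion of the form $|z_{j+1} - \tilde z_{j+1}| \le |\mu|\,|z_j - \tilde z_j|(1 + \text{small}) + (\text{error from } w_j)$, and since we run for $kn$ steps with $|\mu|^{kn}$ amplification but the initial data is of size $|\mu^{-n}t|^k \sim |\mu|^{-kn}$, one has to track things at the level of the rescaled quantities — this is really a Cauchy-sequence argument showing $\Phi_{n+1} - \Phi_n \to 0$ exponentially. (4) Conclude the functional equation $G^k \circ \Phi = \Phi(\mu\,\cdot)$ by passing to the limit in $G^k(\Phi_n(t)) = G^{k(n+1)}(\gamma(\mu^{-n}t)) = \Phi_{n+1}(\mu t)$. (5) The local order $k$ statement: since $\Phi_n(t) = G^{kn}_1(\gamma(\mu^{-n}t))$ and $G^{kn}_1$ has a simple zero along $\mathbb{C}_z$ direction with derivative $\sim \mu^{kn}$ in the unstable direction, while $\gamma_1(\mu^{-n}t) = a\mu^{-kn}t^k + \text{h.o.t.}$, the leading term survives: $\Phi(t) = \psi'(0)\,a\,t^k(1 + o(1))$, giving local order exactly $k$ provided $a \neq 0$, which holds since $\gamma_1 \not\equiv 0$.

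I would organize the estimates by directly comparing consecutive $\Phi_n$. Observe $\Phi_{n+1}(t) = G^{k(n+1)}(\gamma(\mu^{-(n+1)}t)) = G^{kn}\bigl(G^k(\gamma(\mu^{-(n+1)}t))\bigr)$, and $G^k(\gamma(\mu^{-(n+1)}t))$ should be compared with $\gamma(\mu^{-n}t)$: writing $s = \mu^{-(n+1)}t$, we have $G^k(\gamma(s)) = (\mu^k \gamma_1(s) + \text{h.o.t.}, \lambda^k\gamma_2(s) + \text{h.o.t.}) = (\gamma_1(\mu s) + O(|s|^{k+1}), \lambda^k\gamma_2(s) + \text{h.o.t.})$, so the $z$-coordinates differ by $O(|s|^{k+1}) = O(|\mu|^{-(n+1)(k+1)})$ and the $w$-coordinates are both tiny. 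Then applying $G^{kn}$, which has derivative norm $O(|\mu|^{kn})$ on the relevant small region (this needs justification — a hyperbolicity estimate showing the differential of $G^{kn}$ on the shrinking neighborhoods is controlled by $C|\mu|^{kn}$ in the unstable direction and $C|\lambda|^{kn}$ in the stable), one gets $|\Phi_{n+1}(t) - \Phi_n(\mu t)/\mu^{?}|$... — more precisely $\|\Phi_{n+1}(t) - \Phi_n(t)\| \le C|\mu|^{kn}\cdot|\mu|^{-(n+1)(k+1)} = C'|\mu|^{-n}$, which is summable, giving uniform exponential convergence.

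\medskip

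\noindent\emph{Main obstacle.} The crux is step (3)/the comparison estimate: controlling the differential $DG^{kn}$ along the orbit of a point that starts at distance $\sim|\mu|^{-kn}$ from the fixed point and is pushed out to distance $\sim 1$. A naive bound gives $\prod_{j}\|DG\| $ which is $\approx (|\mu|+\epsilon)^{kn}$, and one must ensure the accumulated $\epsilon$-errors (coming from the nonlinearity and, crucially, from the non-vanishing $w$-coordinate feeding into the $z$-equation) do not blow up the constant. The right way is to linearize: use that near $0$, after a holomorphic change of coordinates (Sternberg/Poincaré–Dulac in the hyperbolic, possibly resonant case — but here we only need the unstable-manifold restriction linearized, i.e. Koenigs on $\mathbb{C}_z$), the map on the unstable manifold is exactly $z \mapsto \mu z$, which trivializes the unstable dynamics; then the only thing to estimate is how much the orbit of $(\gamma_1(\mu^{-n}t), \gamma_2(\mu^{-n}t))$ deviates from the unstable manifold, and this deviation is driven entirely by $\gamma_2$ and decays, so its feedback into the $z$-equation is summable. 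I expect the proof to set up precisely such a linearizing coordinate first, reducing the whole statement to a clean perturbative estimate.
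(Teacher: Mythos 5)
Your skeleton (Koenigs on the unstable restriction, rescaling by $\mu^{-n}$, the functional equation to globalize, and the surviving leading term $at^k$ for the local order) matches the paper, and the paper's Lemma \ref{lemma:Koenigs} is exactly your one-dimensional step for the case $\gamma_2=0$. For the general case, however, the paper takes a genuinely different route from your phase-space orbit comparison. Instead of tracking the orbit of $(\gamma_1(\mu^{-n}t),\gamma_2(\mu^{-n}t))$ against the orbit on the unstable manifold and controlling $DG^{kn}$ along an escaping orbit, the paper doubles the parameter, setting $\Phi_n(\zeta,\xi)=G^{kn}\bigl(\gamma_1(\mu^{-n}\zeta),\gamma_2(\mu^{-n}\xi)\bigr)$, and shows that a single application of $G^k$ sends this point to one of the same form at level $n-1$: $G_1^k=\gamma_1\bigl(\mu^{-(n-1)}\psi_n(\zeta,\xi)\bigr)$ and $G_2^k=\gamma_2\bigl(\mu^{-(n-1)}\chi_n(\zeta,\xi)\bigr)$, where $\psi_n=\zeta\bigl(1+\mu^{-n}\tilde\psi_n\bigr)$ is a uniformly small perturbation of the identity and $\chi_n$ contracts $\xi$. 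Thus $\Phi_n=\Phi_N\circ\varphi_{N+1}\circ\cdots\circ\varphi_n$ with $\varphi_j=(\psi_j,\chi_j)$, and convergence reduces to the (immediate) convergence of an infinite composition of near-identity maps of a fixed bidisc; all the error accumulation you worry about in your step (3) is absorbed into reparametrizations, so no hyperbolicity estimate on $DG^{kn}$ is ever needed. This is precisely what your "main obstacle" paragraph asks for, so the two approaches buy the same thing, but the paper's is cleaner to execute.

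One concrete caution about your "organize the estimates" paragraph: the naive consecutive-difference bound $\|\Phi_{n+1}(t)-\Phi_n(t)\|\le C|\mu|^{kn}\cdot|\mu|^{-(n+1)(k+1)}$ is not valid as written, because $G^k(\gamma(\mu^{-(n+1)}t))$ and $\gamma(\mu^{-n}t)$ also differ in the $w$-coordinate by an amount of order $|\mu|^{-nl}$ (the second coordinates scale by $\lambda^k$ versus $\mu^l$, which are not close), and a crude amplification of that mismatch by $|\mu|^{kn}$ diverges whenever $l<k$. Closing this requires exactly the finer structure you defer to the end: $G_1$ vanishes on $\{z=0\}$, so the $w$-mismatch feeds into $z$ only through terms proportional to $z_j$, and the $w$-coordinate contracts, making the total feedback $O(|\mu|^{-nl})$. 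So your route is completable, but only via the refined deviation estimate, not via the displayed derivative bound; the paper's reparametrization sidesteps the issue entirely.
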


\begin{proof}
Let us start with the case where $\gamma_2=0$, as it is more
straightforward. We use the following variation on Koenigs'
Theorem.

\begin{lemma}\label{lemma:Koenigs}
Let
$$
g(z) = \mu^k z + h.o.t.
$$
and
$$
h(t) =a t^k + h.o.t.
$$
be holomorphic functions defined in a neighborhood of the origin,
with $k \ge 1$, $a \neq 0$ and $|\mu| > 1$. Then the sequence of maps
$$
\varphi_n(t) = g^n \circ h(\mu^{-n} t)
$$
converges uniformly and exponentially fast on compact subsets of
$\mathbb C$.
\end{lemma}

\begin{proof}
Using Koenigs' Theorem we can write
$$
g(z) = \psi^{-1}\left(\mu^k \cdot \psi(z)\right),
$$
for a germ $\psi$ of the form
$$
\psi(z) = z + h.o.t. .
$$
Hence $g^n(z) = \psi^{-1} ( \mu^{kn} \psi(z))$, and
$$
g^n \circ h(\mu^{-n} t) = \psi^{-1}\left(\mu^{kn} \psi\circ
h(\mu^{-n}t)\right).
$$
Writing
$$
\psi \circ h(t) = a t^k + t^{k+1} \cdot \eta(t)
$$
we obtain
$$
\mu^{k(n+1)} \psi \circ h (\mu^{-(n+1)}t) - \mu^{kn} \psi \circ h
(\mu^{-n}t) =
\mu^{-n}t^{k+1}\left[\mu^{-1}\eta(\mu^{-(n+1)}t)-\eta(\mu^{-n}t)\right].
$$
The statement follows.
\end{proof}

We continue the proof of Proposition \ref{prop:general linearization}. Use the notation
$$
G^k(z,w)=\left(G_1^k(z,w),G_2^k(z,w)\right)
$$
for the components of the $k$-th iterate of $G$. Since $G$ leaves both axes invariant, we can write
$$G_1^k(z,w)=\mu^k z + z \cdot f_1(z,w) \quad \text{ and } \quad G_2^k(z,w)=\lambda^k w
+ w \cdot f_2(z,w)$$
where $f_1(0,0) = f_2(0,0) = 0$.

By our assumption that $\gamma_2=0$, it follows that $\Phi_n(t) =
\left(g^{n}(\gamma_1(\mu^{-n} t)),0\right)$ for all $n \in \mathbb
N$, where $g(\cdot)=G_1^k(\cdot,0)$. By Lemma \ref{lemma:Koenigs}
the sequence $g^{n}(\gamma_1(\mu^{-n} t))$ converges uniformly and
exponentially fast on compact subsets of $\mathbb C$.

\medskip

Now that we have dealt with the cases where $\gamma_1=0$ or
$\gamma_2=0$, we tackle the general case. Write $\gamma_1(t)=a
t^k + t^{k+1} \tilde{\gamma}_1(t)$ with $a \neq 0$, and likewise
$\gamma_2(t)=b t^l + t^{l+1} \tilde{\gamma}_2(t)$ with $b \neq
0$.

Define
$$
\Phi_n(\zeta, \xi) = G^{kn}\left(\gamma_1(\mu^{-n}\zeta),
\gamma_2(\mu^{-n}\xi)\right),
$$
as a function of two distinct variables. Our goal is to show that
for $\zeta, \xi$ small we have $\Phi_{n}(\zeta, \xi) =
\Phi_{n-1}(\zeta^\prime, \xi^\prime)$, where $\zeta^\prime \sim
\zeta$ and $|\xi^\prime| << |\xi|$.

We start with the trivial equation
$$ \Phi_{n}(\zeta, \xi)  = G^{k(n-1)} \circ G^k \left(\gamma_1(\mu^{-n}\zeta),
\gamma_2(\mu^{-n}\xi)\right).$$ To unravel this, note that for
$\zeta, \xi \in D(0,\epsilon)$ with $\epsilon$ sufficiently small
(independent of $n$), we have
\begin{align*}
 G_1^k&\left(\gamma_1(\mu^{-n}\zeta),\gamma_2(\mu^{-n}\xi)\right)\\
 &= a \mu^{-k(n-1)}\zeta^k + \mu^{-k(n-1)-n}
 \zeta^{k+1}\tilde{\gamma}_1(\mu^{-n}\zeta)
 +\gamma_1(\mu^{-n}\zeta)f_1\left(\gamma_1(\mu^{-n}
 \zeta),\gamma_2(\mu^{-n}\xi)\right)\\
 &= a (\mu^{-(n-1)} \zeta)^k \biggl(1+ \mu^{-n} a^{-1}
 \zeta \tilde{\gamma}_1(\mu^{-n}\zeta) \\
 &\qquad \qquad \qquad \qquad \qquad +\mu^{-n}a^{-1}\mu^{-k}
 \frac{\gamma_1(\mu^{-n}\zeta)}{(\mu^{-n}\zeta)^k}\frac{f_1\left(\gamma_1(\mu^{-n}
 \zeta),\gamma_2(\mu^{-n}\xi)\right)}{\mu^{-n}}\biggr)\\
 &=\gamma_1\left(\mu^{-(n-1)}
 \psi_{n}(\zeta,\xi)\right).
 \end{align*}

Here, the function $\psi_{n}$ has the form
$\psi_{n}(\zeta,\xi)=\zeta\bigl(1+\mu^{-n}
\tilde{\psi}_{n}(\zeta,\xi)\bigr)$ with $\tilde{\psi}_{n}$ holomorphic
and bounded on $D(0,\epsilon) \times D(0,\epsilon)$, uniformly in
$n$.

Likewise, we can work out that
\begin{align*}
G_2^k&\left(\gamma_1(\mu^{-n}\zeta),\gamma_2(\mu^{-n}\xi)\right)\\
&=\lambda^k b \mu^{-ln} \xi^l +\lambda^k
\mu^{-nl-n}\xi^{l+1}\tilde{\gamma}_2(\mu^{-n}\xi)
+\gamma_2(\mu^{-n}\xi)f_2\left(\gamma_1(\mu^{-n}
\zeta),\gamma_2(\mu^{-n}\xi)\right)\\
&=b_k (\lambda^{k/l}\mu^{-n}\xi)^l
\biggl(1+ \mu^{-n}b^{-1}\xi \tilde{\gamma}_2(\mu^{-n}\xi)\\
& \qquad \qquad \qquad \qquad \qquad +
\mu^{-n}b_k^{-1}\lambda^{-k} \frac{
\gamma_2(\mu^{-n}\xi)}{(\mu^{-n}\xi)^l}\frac{f_2\left(\gamma_1(\mu^{-n}\zeta),\gamma_2(\mu^{-n}\xi)\right)}{\mu^{-n}}\biggr)\\
&=\gamma_2\left(\mu^{-(n-1)}\chi_n(\zeta,\xi)\right).
\end{align*}

The function $\chi_n$ has the shape
$\chi_n(\zeta,\xi)=\lambda^{k/l}\mu^{-1} \xi \bigl(1+\mu^{-n}
\tilde{\chi}_n(\zeta,\xi)\bigr)$, once again with
$\tilde{\chi}_{n}$ bounded on $D(0,\epsilon) \times
D(0,\epsilon)$, uniformly in $n$.

Writing $\varphi_n=(\psi_n,\chi_n)$, we now have
$$
\Phi_n(\zeta,\xi)=\Phi_{n-1} \circ \varphi_n(\zeta,\xi).
$$

Let $C$ be an upper bound for all $|\tilde{\psi}_n|$ and
$|\tilde{\chi}_n|$ on $D(0,\epsilon)\times D(0,\epsilon)$, and
choose $N$ large enough to ensure that $\left|\frac{\mu^{-N}
C}{1-|\mu^{-1}|} \right| < \epsilon /2$ and
$|\lambda^{k/l}\mu^{-1} |\left(1+|\mu|^{-N} C \right)<1$. Then a
straightforward induction on $m$ shows that
$$
\varphi_{n}\circ \cdots \circ \varphi_{n+m}(\zeta, \xi ) \in
D\left(0,\frac{\epsilon}{2}+\frac{
|\mu|^{-n}C}{1-|\mu|^{-1}}\right) \times D(0, \epsilon)
$$
for all $n \geq N$, $m \in \mathbb{N}$ and $(\zeta,\xi) \in
D(0,\epsilon/2)\times D(0,\epsilon)$.

We can now write $\Phi_n(\zeta,\xi)=\Phi_N\left( \varphi_{N+1}
\circ \cdots \circ \varphi_n(\zeta,\xi)\right)$. By our bounds on
$|\tilde{\psi}_n|$ and $|\tilde{\chi}_n|$, the sequence
$(\Phi_n(\zeta,\xi))_{n \geq N}$ will converge uniformly and
exponentially fast on $D(0,\epsilon/2)\times D(0,\epsilon)$.

Plugging in $\zeta=\xi=t$ now proves the proposition:
$(\Phi_n(t))_{n \in \mathbb{N}}$ must converge on
$D(0,\epsilon/2)$. By construction, it must satisfy the functional
equation
$$
G^k \circ \Phi(t)=\Phi(\mu t).
$$
This equation also gives us global convergence of $(\Phi_n(t))_{n
\in \mathbb{N}}$ and uniform exponentially fast convergence on
compact subsets. By construction, $\Phi$ must have local order
$k$.
\end{proof}

\begin{remark}\label{remark: linearization F}
Let us say a few words about the role that the linearization map $\Phi$ will play in the rest of this paper. We have a polynomial skew-product $F$ of the form
$$
F(z,w) = \bigl(f(z,w), \lambda \cdot w\bigr),
$$
where $f(z,w)=z^d+a_{d-1}(w)z^{d-1}+\ldots+a_0(w)$, and we write
$p(\cdot) = f(\cdot,0)$. As mentioned at the start of this
section, in some small neighborhood $\{|w|<\epsilon\}$ of the
$\{w=0\}$-fiber, the critical points of $F$ form finitely many
irreducible critical varieties $K_1, \ldots K_q$. We may assume,
shrinking $\epsilon$ if necessary, that each $K_i$ intersects
$\{w=0\}$ at one unique point, which must be a critical point of
$p$. Assume for now that this critical point lies in $\mathcal{J}_p$.

We study these varieties one at a time. Write $K$ for our
irreducible critical variety, and $x_0$ for its intersection with
$\{w=0\}$. We can locally parameterize $K$ by
$$
K = \bigl\{(\gamma_1(t), \gamma_2(t))\bigr\}.
$$

By our assumptions on $p$ each of its critical points in
$\mathcal{J}_p$ is eventually mapped to a repelling fixed point $p^r(x_0)
= x_r$ of $p$. Write $\mu=p'(x_r)$.

Then $F^r(K)$ is a variety though $(x_r,0)$, where $F$ has a
saddle fixed point. Let $\Psi$ be a local change of coordinates
such that $G=\Psi \circ F \circ \Psi^{-1}$ satisfies the
conditions of Proposition \ref{prop:general linearization}: its
stable and unstable manifolds are $\mathbb{C}_w$ and
$\mathbb{C}_z$ respectively. We can write $G(z,w)$ as
$$
G \colon (z,w) \mapsto \bigl(\mu z+zg_1(z,w),\lambda w+wg_2(z,w)
\bigr).
$$
Translating to our new coordinates, our variety of interest
becomes $\Psi\left(F^r(K)\right)$. It can locally be parameterized
by
$$
\bigl\{\Psi \left( F^r\left( \gamma(t)\right)\right)\bigr\}.
$$
If it is equal to $\mathbb{C}_w$, the original variety $F^r(K)$
must lie in the stable manifold $\Sigma_F^s(x_r)$. Otherwise, the
first coordinate of $\Psi \left( F^r\left(
\gamma(t)\right)\right)$ can be written as $at^k + h.o.t.$ where
$a \neq 0$ and $k \geq 1$.

Note that $\Psi \left( F^{kr} \left( \gamma(t) \right)
\right)=G^{kr-r}\circ \Psi \left( F^r\left(
\gamma(t)\right)\right)$ has $\mu^{kr-r}a t^k + h.o.t.$ as its
first coordinate; the value $k$ did not change. We can therefore
apply Proposition \ref{prop:general linearization} to find that
the maps
\begin{align*}
\Phi_n(t)&=F^{kn}\left(\gamma(\mu^{-n}t)\right)\\
&=\Psi^{-1} \circ G^{k(n-r)}\left(\Psi \circ F^{kr} \circ
\gamma\left(\mu^{-r} \mu^{-(n-r)}t\right) \right)
\end{align*}
must converge, uniformly and exponentially fast on compact
subsets, to a holomorphic map $\Phi\colon \mathbb{C} \rightarrow
\Psi^{-1}(\mathbb{C}_z)=\Sigma_F^u(x_r)=\mathbb{C}_z$ of local order $k$. By construction,
$\Phi$ must satisfy the functional equation $F^k \circ \Phi(t) = \Phi(\mu t)$.
This map will be used all through Sections \ref{section:resonant} and \ref{section:tracking}
to study the orbits of critical points in varieties $K$ that pass through the Julia set of $p$.
\end{remark}

\section{The resonant case}\label{section:resonant}

In this section we consider the special resonant case, which was
also studied by Vivas and the first author in \cite{PV2014}. In
this case it is considerably easier to prove the non-existence of
wandering domains, and we will obtain a more precise description
of the critical orbits. Although the results in this section are
not used in the proofs of Proposition \ref{prop:main} and Theorem
\ref{thm:main}, they did serve as an inspiration.

\medskip

We consider polynomial skew-products of the form
$$
F(z,w) = \bigl(p(z) + q(w), \lambda \cdot w\bigr),
$$
for which we now assume that the polynomial $p$ has only a single
critical point $x_0$, which in a finite number of steps is mapped
onto a repelling fixed point $x_r = p^r(x_0)$. Without loss of
generality we may assume that $x_r = 0$. As usual we write
$p^\prime(0) = \mu$, which satisfies $|\mu|>1$. To keep the study
case in this section as straightforward as possible, we assume
that the critical variety $K=\{(x_0,t)\}$ has an image $F^r(K)$
that is transverse to the stable variety $\Sigma_F^s(0)$. We
further assume the following resonant condition:
$$
\mu \cdot \lambda = 1.
$$
While the resonant case is the only case for which the existence
of horizontal disks that avoid the bulging Fatou components of $p$
has been shown, it turns out that it is also the simplest case
where we can show that most fibers $\{w = w_0\}$ contain no such
disks. In fact, in this case the set $E$ of such fibers will not
only have full measure, but will also be open and dense.

\medskip

We use the linearization function $\Phi \colon \mathbb{C}
\rightarrow \mathbb{C}_z$ introduced in Section \ref{section:
Linearization maps}, given in this resonant case by
$$
\Phi(t)=\lim_{n \rightarrow \infty} F^n\left(x_0, \lambda^n
t\right)
$$
By Remark \ref{remark: linearization F} and our assumption that
$F^r(K)$ is transverse to $\Sigma_F^s(0)$, we know that $\Phi$ is
a non-constant holomorphic function.

\begin{prop}\label{prop:resonant}
Let $w_0 \in \mathbb C$ be such that $\Phi(w_0)$ lies in the basin
of infinity of the polynomial $p$. Then the orbit of any
horizontal disk in the $\{w = w_0\}$ fiber must intersect the
bulging Fatou component of $p$.
\end{prop}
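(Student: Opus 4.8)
The plan is to exploit the functional equation $F^k \circ \Phi(t) = \Phi(\mu t)$ together with the resonance $\mu\lambda = 1$, which makes the linearization map $\Phi$ interact with the dynamics of $F$ in a controlled way along the sequence of fibers $\{w = \lambda^n w_0\}$. Since $\Phi(w_0)$ lies in the basin of infinity of $p$, there is an $N$ such that $p^N(\Phi(w_0)) \in W_0$ (in fact $|p^N(\Phi(w_0))| > R$), and by continuity the same holds for a whole horizontal disk around $(\Phi(w_0), 0)$; the point is to transport this escape statement to the fibers $\{w = w_0\}$ themselves via $\Phi$. Concretely, $\Phi(w_0) = \lim_n F^n(x_0, \lambda^n w_0)$, so for $n$ large the point $F^n(x_0, \lambda^n w_0)$ is close to $(\Phi(w_0), 0)$, hence after finitely many more iterates its first coordinate has modulus larger than $R$ and it has entered $W$; because $\overline{F(W)} \subseteq W$, it stays in $W$ forever.

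First I would set up the contradiction: suppose $D$ is a horizontal disk in $\{w = w_0\}$ whose forward orbit avoids the bulging Fatou component of $p$. As in the proof of Proposition~\ref{prop:main}, $F^n|_D$ is then a normal family converging (along a subsequence, after shrinking $D$) uniformly to a point $\zeta \in \mathcal{J}_p$, and $F^n(D)$ never meets $W$. The second step is to produce, from the escape of the critical orbit through $\Phi$, a large horizontal disk in some fiber $\{w = \lambda^{n_0} w_0\}$ that lies entirely inside $W_0 \times \{\lambda^{n_0} w_0\}$ and whose preimage under an appropriate iterate of $F$ covers (a definite neighborhood of) $D$. Here I would use that $F_1^n(\cdot, w_0)\colon D(0,R) \times \{w_0\} \to \mathbb{C}$ is a polynomial of degree $d$ in the first variable, so preimages of disks are controlled in size, and that the variety $F^r(K)$ through the saddle, being transverse to $\Sigma^s_F(0)$, forces $\Phi$ to be an open map near $w_0$. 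The resonance $\mu\lambda = 1$ is what guarantees that iterating $F$ exactly $k$ times rescales the $t$-parameter of $\Phi$ by $\mu$ while rescaling the fiber coordinate by $\lambda^k = \mu^{-k}$, so the geometry of the picture in the fiber $\{w = \lambda^{kn} w_0\}$ is a clean rescaled copy of the picture in $\{w = w_0\}$ — this is the mechanism that lets a single escape statement propagate to all fibers $\{w = w_0\}$ rather than only to $\{w=0\}$.

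The main obstacle, and the step I would spend the most care on, is the transfer of the escape information from the graph of $\Phi$ (which lives over the parameter $t$, with $\Phi(t)$ in the fiber $\{w = 0\}$ in the limit) to honest horizontal disks in the fibers $\{w = \lambda^n w_0\}$ with $n$ large but finite. The functional equation only holds in the limit, so one must quantify: $F^{kn}(x_0, \lambda^{kn}w_0)$ is within an exponentially small error of $\Phi(\mu^{?} \cdot)$ evaluated at the right point, and one needs this error to be dominated by the (fixed) size of the neighborhood on which $p^N$ pushes points past radius $R$. Because the convergence $\Phi_n \to \Phi$ is exponentially fast on compact sets (Proposition~\ref{prop:general linearization}) and $N$ can be chosen once and for all from the open condition $\Phi(w_0) \in$ (basin of infinity), this is manageable. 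Once one such escaping horizontal disk is in hand in a deep fiber, pulling it back to the $\{w=w_0\}$ fiber by a finite iterate of $F$ — whose first-coordinate degree is bounded — yields a disk in $\{w=w_0\}$ containing (a translate of) $D$ and escaping to $W$, contradicting that $F^n(D)$ never meets $W$. I would finish by noting that this even shows the set of bad $w_0$ is contained in the zero set of a single non-constant holomorphic function (namely where $\Phi$ hits the filled Julia set of $p$), which is why in the resonant case $E$ is open and dense.
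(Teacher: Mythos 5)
Your setup (normal families, convergence of $F^{n_j}(D)$ to a point $\zeta\in\mathcal{J}_p$, the role of $\Phi$ and of the resonance in identifying the linearizing parameter $t$ with the fiber coordinate $w$) matches the paper, and your closing remark about the bad set being contained in $\Phi^{-1}(K_p)$ is also how the paper deduces that $E$ is open and dense. But the central step of your argument does not work. You propose to find a horizontal disk $\Delta\subseteq W_0\times\{\lambda^{n_0}w_0\}$ in a deep fiber whose preimage under $F^{n_0}$ contains (a neighborhood of) $D$, and to conclude $F^{n_0}(D)\subseteq W$, contradicting the standing assumption. For $D$ to lie in $(F^{n_0})^{-1}(\Delta)$ you would already need $F^{n_0}(D)\subseteq W_0\times\{\lambda^{n_0}w_0\}$ -- i.e.\ you would need to have proved exactly the escape statement you are trying to derive. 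The hypothesis $\Phi(w_0)\in I_\infty$ only tells you that the \emph{critical points} $(x_0,\lambda^n w_0)$ escape; $D$ is an arbitrary disk with no a priori relation to the critical variety, and its forward images really can cling to $\mathcal{J}_p$ for arbitrarily long times (this is the content of the examples in \cite{PV2014}). No transversality or openness of $\Phi$ can bridge that gap, because the obstruction is not geometric smallness but the direction of the inclusion.

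What the escape of the critical orbit actually buys is univalence of inverse branches, and that is how the paper closes the argument. One sets $a_{n,m}=F^{n+m}(x_0,\lambda^n w_0)$ and uses the functional equation plus the resonance to show that, as $n\to\infty$, these points converge to the bi-infinite orbit $a_m=F^m(\Phi(w_0))$, which tends to the repelling fixed point as $m\to-\infty$ and to $\infty$ as $m\to+\infty$ and lies entirely in the Fatou set. Hence (Lemma \ref{lemma:y}) for any $y\in\mathcal{J}_p$ off the finite post-critical set there are $\delta,N$ with $a_{n,m}\notin\bar D(y,\delta)\times\mathbb{C}$ for all $n\geq N$, $n+m\geq 0$. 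Taking $y=\zeta$, the component $O_j$ of $(F^{n_j-N})^{-1}(D(\zeta,\delta)\times\{\lambda^{n_j}w_0\})$ containing $F^N(D)$ then contains no critical points, so $F^{n_j-N}\colon O_j\to D(\zeta,\delta)$ is a biholomorphism, preserves the Poincar\'e metric, and forces $\mathrm{diam}_{D(0,R)}(F^N(D))\leq\mathrm{diam}_{D(\zeta,\delta)}(F^{n_j}(D))\to 0$, a contradiction. Your quantitative control of $\|\Phi_n-\Phi\|$ is needed, but to establish the uniform avoidance of the cylinder $\bar D(\zeta,\delta)\times\mathbb{C}$ by the critical orbits -- not to push $D$ itself into $W$.
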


Note that under our assumptions the filled Julia set of $p$ has empty interior, hence the basin of infinity is open and dense. Since $\Phi$ is a non-constant holomorphic
function, it follows that the above proposition holds for an open and dense set of parameters $w_0$, and in particular that $F$ does not have wandering Fatou components. The
interested reader should have no difficulty generalizing Proposition \ref{prop:resonant} to the case of several critical points that are all mapped to resonant repelling
periodic points.

\medskip

In preparation for the proof of Proposition \ref{prop:resonant},
we will study the orbits of the critical points $(x_0, \lambda^n
w_0)$ for $w_0$ such that $\Phi(w_0)$ lies in the basin of
infinity of $p$. More precisely, we will show that, for $n \in
\mathbb N$ sufficiently large, the orbits of the critical points
$(x_0, \lambda^n w_0)$ will stay uniformly away from any point in
$\mathcal{J}_p \setminus \{x_0, \ldots, x_r\}$.

\medskip

For $n \in \mathbb{N}$, $m \in \mathbb{Z}$ and $n+m \geq 0$ we define
$$
a_{n,m}=F^{n+m}\left(x_0, \lambda^n w_0 \right).
$$
Note that $a_{n,0}=\Phi_n(w_0)$ and $F(a_{n,m})=a_{n,m+1}$. Since $F(\Phi(t))=\Phi(\tfrac{t}{\lambda})$, we have
$$
\lim_{n \to \infty} a_{n,m}=\Phi\left(\frac{w_0}{ \lambda^m}\right) = F^m\left(\Phi(w_0)\right)
$$
for $m \ge 0$, and
$$
\lim_{n\to \infty} a_{n,-m}=\Phi(\lambda^m w_0) \in
F^{-m}\left(\{\Phi(w_0)\}\right).
$$

\medskip

Denote $a_m=\lim_{n \to \infty} a_{n,m}$. The sequence $(a_m)$ satisfies
$$
\lim_{m \to -\infty} a_m = 0 \textrm{ and } \lim_{m \to +\infty} a_m = \infty.
$$
Recall that $R>0$ was chosen such that $|p(z)| \geq 2 |z|$ for all
$|z|>R$. Increasing $R$ if necessary, we may also assume that
$\mathcal{J}_p\subseteq D(0,R-1)$ and
$$
|F(z,w)|> 2|z|
$$
for all $|z|>R$ and $|w|< |w_0|$. We then define
$$
V=\left\{(z,w)\in\mathbb{C}^2: |z| >R \mathrm{\; and \;} |w|<|w_0|
\right\}.
$$
It follows that $F(V) \subseteq V$. Since $\lim_{m \to +\infty}
a_m = \infty$ there exists an $N_1>0$ such that $|a_m|>R$ for all $m
\geq N_1$. Then for any $\epsilon>0$ we can find a natural number
$N_{\epsilon}$ such that
\begin{enumerate}
\item[(i)]{$|a_{n,m}-a_m|<\epsilon$ whenever $n \geq N_{\epsilon}$, $n+m \geq r$ and $m \leq N_1$.}
\item[(ii)]{$a_{n,m}\in V$ whenever $n \geq N_{\epsilon}$ and $m \geq N_1$.}
\end{enumerate}
Recall that $p^r(x_0) = 0$, which is a repelling fixed point of
$p$. By continuity of $F$ we also have that
\begin{equation}\label{eq: resonant}
\lim_{n \to \infty} a_{n,j-n}=p^j(x_0).
\end{equation}
for all $j \ge 0$. With this information, we can prove the
following lemma.

\begin{lemma}\label{lemma:y}
Let $y \in \mathcal{J}_p \setminus \{x_0,p(x_0), \ldots
p^{r-1}(x_0),0\}$. Then there exist $\delta=\delta(y)>0$ and
$N=N(y) \in \mathbb{N}$ such that $a_{n,m} \notin
\bar{D}(y,\delta) \times \mathbb{C}$ for $n \geq N$ and $n+m \geq
0$.
\end{lemma}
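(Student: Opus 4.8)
The plan is to partition the admissible index set $\{(n,m):n+m\ge 0\}$ into three regimes and to dispatch each with one of the facts assembled just before the lemma. Writing $\pi(z,w)=z$ for the projection onto the first coordinate, the assertion $a_{n,m}\notin\overline D(y,\delta)\times\mathbb C$ is precisely $|\pi(a_{n,m})-y|>\delta$. The three regimes are: (1) $m\ge N_1$, where property (ii) forces $a_{n,m}\in V$, so $|\pi(a_{n,m})|>R$; (2) $m<N_1$ and $n+m\ge r$, where property (i) makes $a_{n,m}$ lie within $\epsilon$ of the limit point $a_m$; and (3) $0\le n+m\le r-1$, where \eqref{eq: resonant} makes $\pi(a_{n,m})$ close to $p^{\,n+m}(x_0)$. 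These three regimes are pairwise disjoint and exhaust all $(n,m)$ with $n+m\ge 0$.

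The one input that is not bookkeeping is the following: \emph{for every $m\in\mathbb Z$ the point $\pi(a_m)$ lies in the basin of infinity of $p$, hence $\pi(a_m)\ne y$.} For $m\ge 0$ this holds because $a_m=F^m(\Phi(w_0))$ with $\Phi(w_0)\in\mathbb C_z$ and $F|_{\mathbb C_z}=p$, while $\pi(\Phi(w_0))$ lies in the basin of infinity by hypothesis, which is forward invariant; for $m\le 0$ the functional equation $F\circ\Phi(t)=\Phi(t/\lambda)$ gives $F^{-m}(a_m)=\Phi(w_0)$, so $\pi(a_m)$ is a $p^{-m}$-preimage of $\pi(\Phi(w_0))$ and lies in the (completely invariant) basin of infinity as well. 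Since moreover $a_m\to 0\ne y$ as $m\to-\infty$, the set $\overline{\{\pi(a_m):m\le N_1\}}=\{\pi(a_m):m\le N_1\}\cup\{0\}$ is compact and avoids $y$; put $2\delta_2:=\mathrm{dist}\bigl(y,\overline{\{\pi(a_m):m\le N_1\}}\bigr)>0$. Likewise, since $y\notin\{x_0,\dots,p^{r-1}(x_0)\}$, put $2\delta_1:=\min_{0\le j\le r-1}|y-p^j(x_0)|>0$, and using \eqref{eq: resonant} for the finitely many $j\in\{0,\dots,r-1\}$ choose $N_3$ so that $|\pi(a_{n,j-n})-p^j(x_0)|<\delta_1$ for all $n\ge N_3$.

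With these constants in hand I would set $\delta:=\tfrac12\min\{1,\delta_1,\delta_2\}$, invoke the statement preceding the lemma with $\epsilon:=\delta_2$ to fix $N_{\delta_2}$, and take $N:=\max\{N_{\delta_2},N_3\}$. Then, for $n\ge N$ and $n+m\ge 0$: in regime (1), $|\pi(a_{n,m})|>R$ while $|y|<R-1$, so $|\pi(a_{n,m})-y|>1>\delta$; in regime (2), property (i) gives $|\pi(a_{n,m})-y|\ge|\pi(a_m)-y|-|a_{n,m}-a_m|>2\delta_2-\delta_2=\delta_2>\delta$; in regime (3), the choice of $N_3$ gives $|\pi(a_{n,m})-y|\ge|p^{\,n+m}(x_0)-y|-\delta_1\ge 2\delta_1-\delta_1=\delta_1>\delta$. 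In every case $|\pi(a_{n,m})-y|>\delta$, which is the claim.

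I expect the middle regime to be the only delicate point: the argument rests on the limit points $a_m$ (for $m$ bounded above) staying uniformly away from $y$, and this is exactly where the hypothesis $\Phi(w_0)\in$ basin of infinity is used — the key being that these points sit in an \emph{open}, completely invariant set disjoint from $\mathcal J_p$ and accumulate only on $0\ne y$, so that the relevant closed set is compact and misses $y$. Everything else is the triangle inequality applied separately on the escaping part of the orbit (trapped in $V$), the part shadowing the unstable-manifold orbit $(a_m)$, and the short initial part shadowing $x_0,\dots,p^{r-1}(x_0)$.
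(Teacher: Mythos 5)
Your proof is correct and follows essentially the same route as the paper's: both arguments rest on the observation that the limit points $a_m$ lie in the Fatou set (so avoid a fixed disk around $y$, by compactness via $a_m\to 0$ and $a_m\to\infty$), then transfer this to $a_{n,m}$ using properties (i) and (ii), and finally handle the initial segment $0\le n+m<r$ with Equation \eqref{eq: resonant}. You merely make explicit the three-regime bookkeeping and the complete invariance of the basin of infinity that the paper leaves implicit.
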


\begin{proof}
Note that all $a_m$ lie in the Fatou set of $p$, and are therefore
not equal to $y$. Since $\lim_{m \rightarrow -\infty} a_m = 0$ and
$\lim_{m \rightarrow \infty} a_m = \infty$ the points $a_m$ must
in fact avoid some small disk $D(y,\delta_1)$. Setting $\epsilon =\delta=
\delta_1/2$, and $N=N_{\epsilon}$, the statement follows whenever $n \geq N$ and $n+m \geq r$. Equation \ref{eq: resonant} allows us to increase $N$ and shrink $\delta$ to deal with the case where $0 \leq n+m <r$.
\end{proof}

\noindent {\bf Proof of Proposition \ref{prop:resonant}.} We
follow the argument due to Fatou that we mentioned in the introduction. Suppose
for the purpose of a contradiction that a horizontal disk $D=
D(x,s) \times \{w_0\}$ converges to the Julia set of $p$. Since this Julia
set has empty interior, we can find a subsequence $(n_j)$ such
that $F^{n_j}(D)$ converges to a point $y \in \mathcal{J}_p$. Since $x_r = 0$
is repelling, we may assume that $y \notin \{x_0, \ldots, x_r\}$.

By Lemma \ref{lemma:y} there exists a cylinder $\bar{D}(y,\delta)
\times \mathbb{C}$ that is avoided by $a_{n,m}$ whenever $n \geq
N$ and $n+m \geq 0$. To use this property, let
$F^N(x,w_0)=:(x',\lambda^N w_0)$ and choose $s'>0$ such that
$D'=D(x',s') \times \{\lambda^N w_0\} \subseteq F^N(D)$. Then
$F^{n_j-N}(D')$ will still converge to $y$.

For $j \in \mathbb{N}$ such that $n_j \geq N$, we consider the
function
$$
\psi_j=F^{n_j-N}(\cdot,\lambda^{N} w_0) \colon \mathbb{C}
\rightarrow \mathbb{C}  \times  \{\lambda^{n_j} w_0\}.
$$
Note that $D' \subseteq \psi_j^{-1}\left(D(y,\delta) \times
\{\lambda^{n_j}w_0\}\right) \subset D(0,R) \times
\{\lambda^{N}w_0\}$ for all sufficiently large values of $j$. Let
$O_j$ be the connected component of $D'$ in this inverse image.
Since the points $a_{n,m}$ avoid $D(y,\delta) \times
\{\lambda^{n_j}w_0\}$, it follows that $\psi_j|_{O_j}$ has no
critical points. It must be a biholomorphism between $O_j$ and
$D(y,\delta) \times \{\lambda^{n_j}w_0\}$, and hence it preserves
the Poincar\'{e} metric. This gives us the inequality
$$
\text{diam}_{D(0,R)}(D') \leq
\text{diam}_{O_j}(D')=\text{diam}_{D(y,\delta)}(F^{n_j-N}(D')).
$$
As $F^{n_j-N}(D')$ converges to $y$, its Poincar\'{e} diameter in
$D(y,\delta)$ must tend to zero. This proves that $D'$ cannot
exist. \hfill $\square$

\section{Semi-hyperbolic polynomials and sublevel sets}\label{section: John Domains}

Let $p$ be a semi-hyperbolic polynomial, with Fatou set
$$
\mathcal{F}_p=I_{\infty} \cup \bigcup_{y} \Omega_p(y)
$$
equal to the union of the basin of infinity and the basins of attracting periodic cycles. Since $p$ is semi-hyperbolic, Siegel disks and parabolic basins cannot occur. We define a set $W_0$ as in Section \ref{section:outline}: Fix $R>0$ large enough
such that
$$
|z| > R \; \; \mathrm{implies \; that} \; \; |p(z)| > 2|z|,
$$
and for each attracting periodic point $y$ of $p$, fix an open neighborhood $W_y$ of its orbit such that $\overline{p(W_y)} \subset W_y$.
Now set
$$
W_0=\{|z|>R\} \cup \bigcup_{y}W_y,
$$
where we take the union over all attracting periodic points $y \in \mathbb{C}$.

Define the sets $U_m:=p^{-m}(W_0)$ and write $V_m$ for their complements. Notice that
$$
\mathcal{F}_p = \bigcup_{m \in \mathbb{N}} U_m.
$$
We will prove the following.

\begin{prop}\label{prop:area}
The areas of the sets $V_m$ decrease exponentially with $m$.
\end{prop}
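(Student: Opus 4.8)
The plan is to reduce Proposition~\ref{prop:area} to an estimate on how fast the compact sets $V_m$ collapse onto $\mathcal J_p$, and then to combine two facts about semi-hyperbolic polynomials from Carleson--Jones--Yoccoz \cite{CJY1994}: the exponential shrinking of inverse branches over small disks, and the porosity of the Julia set.

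I would begin with the elementary reductions. Directly from the definition $U_m=p^{-m}(W_0)$ we get $V_m=p^{-m}(V_0)$, where $V_0=\mathbb C\setminus W_0$ is compact and contains $\mathcal J_p$ (since $W_0\subseteq\mathcal F_p$). Because $\mathcal F_p=\bigcup_m U_m$, the nested compact sets $V_m$ satisfy $\bigcap_m V_m=\mathcal J_p$, so every neighbourhood of $\mathcal J_p$ contains all but finitely many $V_m$. Fixing once and for all the radius $r_0>0$ that will be supplied by \cite{CJY1994}, I choose $M_0$ so that $V_{M_0}$ lies in the $(r_0/2)$-neighbourhood $\mathcal J_p^{(r_0/2)}$ of $\mathcal J_p$.

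Next I would import from \cite{CJY1994}: (a) since $p$ is semi-hyperbolic there are $C_4\ge 1$ and $\theta\in(0,1)$ such that for every $a\in\mathcal J_p$, every $n\ge 1$ and every connected component $W$ of $p^{-n}(D(a,r_0))$ one has $\mathrm{diam}(W)\le C_4\theta^{\,n}$; and (b) $\mathcal J_p$ is uniformly porous, and hence, for some $s<2$ and $C_5\ge 1$, $\mathrm{Area}\bigl(\mathcal J_p^{(\delta)}\bigr)\le C_5\,\delta^{\,2-s}$ for all $0<\delta\le r_0$ (porosity forces the upper box dimension below $2$). The key step would then be to prove $V_m\subseteq\mathcal J_p^{(C_6\theta^{\,m})}$. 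For this, given $m>M_0$ and $z\in V_m$, note that $p^{m-M_0}(z)\in V_{M_0}\subseteq\mathcal J_p^{(r_0/2)}$, so pick $a\in\mathcal J_p$ with $|p^{m-M_0}(z)-a|<r_0/2$ and let $W$ be the component of $p^{-(m-M_0)}\bigl(D(a,r_0)\bigr)$ containing $z$. Because $p$, hence $p^{m-M_0}$, is a proper self-map of $\mathbb C$, the restriction $p^{m-M_0}\colon W\to D(a,r_0)$ is proper and onto, so some $\zeta\in W$ satisfies $p^{m-M_0}(\zeta)=a$ and therefore $\zeta\in p^{-(m-M_0)}(\mathcal J_p)=\mathcal J_p$. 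By (a), $\mathrm{dist}(z,\mathcal J_p)\le|z-\zeta|\le\mathrm{diam}(W)\le C_4\theta^{\,m-M_0}$, which gives the claim with $C_6=C_4\theta^{-M_0}$. Feeding this into (b),
$$
\mathrm{Area}(V_m)\ \le\ \mathrm{Area}\bigl(\mathcal J_p^{(C_6\theta^{\,m})}\bigr)\ \le\ C_5\bigl(C_6\theta^{\,m}\bigr)^{2-s}\ =\ C_7\bigl(\theta^{\,2-s}\bigr)^{m},
$$
and $\theta^{\,2-s}<1$, which is the asserted exponential decay.

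The main obstacle is not the assembly above but making precise the two imports (a) and (b) from \cite{CJY1994}. Both use semi-hyperbolicity --- the absence of parabolic cycles and of recurrent critical points on $\mathcal J_p$ --- in an essential way, through telescopes of disks with uniformly bounded distortion built along orbit segments that shadow the post-critical set; and since the theorems of \cite{CJY1994} are frequently phrased for connected Julia sets, I would need to check that the pull-back and porosity estimates actually used here are purely local near $\mathcal J_p$ and so remain valid in the disconnected case. I expect most of the section to be devoted to these two points, after which the argument closes as above.
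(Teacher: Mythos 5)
Your proposal is correct and follows essentially the same route as the paper: both use the Carleson--Jones--Yoccoz exponential shrinking of preimage components of small disks to show $V_m$ lies in an exponentially shrinking neighbourhood of $\mathcal J_p$, and then the fact that $\mathcal J_p$ has upper box (Minkowski) dimension strictly less than $2$ --- which the paper extracts from the John-domain property of $I_\infty$ via \cite{KR1997}, and you via the equivalent porosity statement --- to bound the area of such neighbourhoods by $C\delta^{2-s}$. Your properness argument for why the preimage component contains a point of $\mathcal J_p$ makes explicit a step the paper leaves implicit, and your caveat about connectedness is reasonable but harmless, since the relevant estimates in \cite{CJY1994} are local near $\mathcal J_p$.
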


We note that this proposition certainly does not hold for any polynomial; for example, it does not hold for the polynomial $z \mapsto z+z^2$. We leave the computation to the
reader. To prove the above proposition we will therefore make heavy use of the fact that $p$ is semi-hyperbolic.

\medskip

First of all, the semi-hyperbolicity of $p$ allows us to use Theorem 2.1 in \cite{CJY1994}:
\begin{thm}\label{thm:CJY}
There exist constants $\epsilon>0$, $c>0$ and $\theta<1$ such that for all $x \in \mathcal{J}_p$, we have
$$
\text{diam}\bigl(B_n(x,\epsilon)\bigr) \leq c \theta^n,
$$
where $B_n(x,\epsilon)$ is any connected component of $p^{-n}(D(x,\epsilon))$.
\end{thm}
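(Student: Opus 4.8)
The plan is to follow the argument of Carleson, Jones and Yoccoz in \cite{CJY1994}, whose structure I outline here. The single property of semi-hyperbolic polynomials that makes everything work is a \emph{bounded degree} (or bounded criticality) property: there exist $\delta_0 > 0$ and $N \in \mathbb{N}$ such that for every $x \in \mathcal{J}_p$ and every $n \geq 1$, the restriction of $p^n$ to any connected component of $p^{-n}(D(x,\delta_0))$ is a proper branched covering of degree at most $N$. The first step is to deduce this from the definition of semi-hyperbolicity: the absence of parabolic cycles together with the non-recurrence of the critical points in $\mathcal{J}_p$ forces the forward orbit of each such critical point to meet a fixed small neighbourhood of $\mathcal{J}_p$ only boundedly often at a definite scale, and running this bound along the pullback $p^{-n}(D(x,\delta_0))$ controls the total accumulated branching. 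This, together with step three below, is the technical heart of the proof.

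Second, one converts bounded degree into distortion control. A proper holomorphic map of bounded degree between disks factors, near each of its boundedly many critical points, through a power map and is univalent elsewhere; combining the Koebe distortion theorem on the univalent parts with the trivial distortion of power maps yields a constant $K = K(N) \ge 1$ such that, writing $B$ for a component of $p^{-n}(D(x,\delta_0))$ and $B'$ for the component of $p^{-n}(D(x,\delta_0/2))$ inside it, one has $\mathrm{diam}(B) \le K\,\mathrm{diam}(B')$, and more generally the preimage in $B$ of a subdisk $D(x,\rho)$ with $\rho \le \delta_0/2$ has diameter at most $K(\rho/\delta_0)\,\mathrm{diam}(B)$. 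Third, and with this in hand, one proves \emph{uniform shrinking}: with $q(n) := \sup\{\mathrm{diam}(B): x\in\mathcal{J}_p,\ B\text{ a component of }p^{-n}(D(x,\delta_0))\}$ one has $q(n)\to 0$ as $n\to\infty$. This is proved by contradiction and compactness: all the components in question lie in a fixed compact set and carry maps of uniformly bounded degree, so if $q(n_j)\ge\eta>0$ along a subsequence one can extract a non-constant holomorphic limit of normalized inverse branches; but every such component meets $\mathcal{J}_p$ and $\mathcal{J}_p$ has empty interior, forcing the limit to be constant — a contradiction. (It is precisely this step that breaks down in the presence of parabolic points or recurrent critical points, which is why the analogous statement fails for, say, $z\mapsto z+z^2$, where the attracting petal at the parabolic fixed point lies inside a single component of $p^{-n}(D(0,\delta_0))$ for all large $n$.)

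Fourth, one bootstraps uniform shrinking to the exponential estimate. Fix $n_0$ with $q(n_0)\le \delta_0/(8K)$. Given a component $B$ of $p^{-n}(D(x,\delta_0))$ with $x\in\mathcal{J}_p$ and $n$ large, write $n = Ln_0 + s$ with $n_0\le s<2n_0$, and build a nested tower $\Omega^{(0)}=D(x,\delta_0)\supseteq\Omega^{(1)}\supseteq\cdots\supseteq\Omega^{(L-1)}$, where $\Omega^{(i+1)}$ is the component of $p^{-n_0}(\Omega^{(i)})$ passing through $p^{(L-1-i)n_0+s}(B)$. Each $\Omega^{(i)}$ maps onto $\Omega^{(i-1)}$, hence meets $\mathcal{J}_p$, and $\mathrm{diam}(\Omega^{(i)})\le q(n_0)\le\delta_0/(8K)$ for $i\ge1$; so $\Omega^{(i)}$ lies well inside a disk $D(x^{(i)},\delta_0/2)$ centred on $\mathcal{J}_p$, and the distortion estimate from step two gives $\mathrm{diam}(\Omega^{(i+1)})\le\tfrac14\mathrm{diam}(\Omega^{(i)})$ for all $i\ge1$ (after enlarging $K$ to absorb the distortion constant). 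Therefore $\mathrm{diam}(\Omega^{(L-1)})\le 4^{-(L-2)}\delta_0$. Finally $B$ is recovered from $\Omega^{(L-1)}$ by at most $2n_0$ further pullbacks — a bounded number of steps through a map of degree at most $d^{2n_0}$ — so $\mathrm{diam}(B)$ is at most a fixed Hölder power of $\mathrm{diam}(\Omega^{(L-1)})$; since $L\ge n/n_0-2$ this yields $\mathrm{diam}(B)\le c\,\theta^n$ for suitable $c>0$ and $\theta<1$, proving the theorem with $\epsilon=\delta_0$. As indicated, the main obstacles are step one (extracting the bounded-degree property from non-recurrence and the absence of parabolics) and step three (the compactness argument excluding a uniform-scale continuum in $\mathcal{J}_p$); steps two and four are then routine distortion theory and bootstrapping.
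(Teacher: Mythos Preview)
The paper does not prove this statement at all: it is quoted as Theorem~2.1 of \cite{CJY1994} and used as a black box in the proof of Proposition~\ref{prop:area}. Your outline follows the Carleson--Jones--Yoccoz argument, so there is nothing in the paper to compare it against.

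One genuine issue in your sketch, however, is the contradiction in step three. The clause ``every such component meets $\mathcal{J}_p$ and $\mathcal{J}_p$ has empty interior, forcing the limit to be constant'' does not work as written: a non-constant limit $g$ of (suitably normalized) inverse branches has \emph{open} image, and nothing about $\mathcal{J}_p$ having empty interior prevents an open set from meeting $\mathcal{J}_p$. The correct mechanism is that on such a limiting open set the forward iterates $p^{n_j}$ would remain uniformly bounded (they land in $D(x,\delta_0)$), so the set lies in the Fatou set; but it also contains a point of $\mathcal{J}_p$ (a limit of the preimages of $x$), which is the contradiction. Alternatively one can bypass the normal-families argument entirely and obtain shrinking directly from hyperbolic-metric contraction under bounded-degree covers, as in the Shishikura--Tan~Lei proof of Ma\~n\'e's theorem \cite{SL}; this also sidesteps the awkwardness of speaking of ``inverse branches'' when the covers are genuinely branched.
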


As $\{z \in \mathbb{C}:~ |z| \leq R \text{ and } \text{d}(z,\mathcal{J}_p) \geq \epsilon \}$ is compact and contained in the increasing union $\bigcup_{m} U_m$, it must be contained in $U_{m_0}$ for some $m_0 \in \mathbb{N}$. Then for $m \geq m_0$, we know that $V_m \subseteq \{z \in \mathbb{C}:~\text{d}(z, \mathcal{J}_p)<\epsilon\}$. Let $z \in V_m$. Then $p^{m-m_0}(z) \in V_{m_0}$ and hence there exists $x \in \mathcal{J}_p$ such that $p^{m-m_0}(z) \in D(x, \epsilon)$. By Theorem \ref{thm:CJY}, this implies that $\text{d}(z, \mathcal{J}_p) \leq c \theta^{m-m_0}$.
It follows that the sets $V_m$ are contained in
$\delta_m$-neighborhoods of $\mathcal{J}_p$, where $\delta_m$ decreases exponentially with $m$.

\medskip

Recall from \cite{CJY1994} that since $p$ is semi-hyperbolic, the basin $I_{\infty}$ is a so-called John domain.
For a John domain in $\mathbb{R}^m$, an upper bound for the Minkowski-dimension of its boundary is given in \cite{KR1997}.
In our circumstances, it follows that $\mathrm{dim}_M(\mathcal{J}_p)=\mathrm{dim}_M(\partial I_{\infty}) < 2$. A quick consequence is the
following.

\begin{lemma}\label{lemma:Mink}
Suppose that the sequence $(\delta_m)$ decreases exponentially fast. Then the area of the $\delta_m$-neighborhoods of $\mathcal{J}_p$ also decreases exponentially fast.
\end{lemma}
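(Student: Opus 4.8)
The plan is to deduce the area estimate directly from the strict inequality $\dim_M(\mathcal{J}_p) < 2$ recorded above, via the standard relationship between upper Minkowski dimension and the area of $\delta$-neighbourhoods of a compact set. Since $\mathcal{J}_p$ is compact, all of its $\delta$-neighbourhoods are bounded, so there is no issue of finiteness.

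First I would recall the covering description of the dimension: writing $N(\delta)$ for the minimal number of Euclidean disks of radius $\delta$ needed to cover $\mathcal{J}_p$, one has $\dim_M(\mathcal{J}_p)=\limsup_{\delta\to 0}\log N(\delta)/(-\log\delta)$. As this quantity equals some $s<2$, I fix an intermediate exponent $s<s'<2$; then there exists $\delta_0>0$ with $N(\delta)\leq \delta^{-s'}$ for all $0<\delta<\delta_0$.

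Next, for any $\delta<\delta_0$, choose disks $D(x_1,\delta),\dots,D(x_{N(\delta)},\delta)$ covering $\mathcal{J}_p$. The $\delta$-neighbourhood $\{z:\mathrm{d}(z,\mathcal{J}_p)<\delta\}$ is then contained in $\bigcup_i D(x_i,2\delta)$, so its area is at most $4\pi N(\delta)\delta^2\leq 4\pi\,\delta^{2-s'}$, and $2-s'>0$. Now apply this with $\delta=\delta_m$: by hypothesis there are $A>0$ and $0<\rho<1$ with $\delta_m\leq A\rho^m$, hence $\delta_m<\delta_0$ for $m$ large, and the area of the $\delta_m$-neighbourhood of $\mathcal{J}_p$ is at most $4\pi A^{2-s'}(\rho^{2-s'})^m$ for all large $m$, with $\rho^{2-s'}<1$. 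Together with boundedness of the areas for the finitely many remaining small values of $m$, this gives the exponential decay.

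I do not expect a genuine obstacle here: the real content — that $\dim_M(\mathcal{J}_p)<2$ — has already been extracted from the John-domain property via \cite{CJY1994} and \cite{KR1997}. The only point requiring mild care is that the Minkowski dimension is available only as a \emph{strict} upper bound, which is exactly why one must pass to an intermediate exponent $s'$ rather than working with $s$ directly; after that the estimate is a routine covering argument.
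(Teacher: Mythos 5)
Your argument is correct and is essentially identical to the paper's: both fix an intermediate exponent $s'$ with $\dim_M(\mathcal{J}_p)<s'<2$, cover $\mathcal{J}_p$ by at most $\delta^{-s'}$ disks, double their radii to cover the $\delta$-neighbourhood, and bound its area by $4\pi\,\delta^{2-s'}$. The only cosmetic difference is that you invoke the limsup definition of $N(\delta)$ while the paper phrases the same bound via vanishing of the $s'$-dimensional content, and you spell out the final substitution $\delta_m\leq A\rho^m$ which the paper leaves implicit.
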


\begin{proof}
Let $N_\epsilon$ be the number of $\epsilon$-disks needed to cover $\mathcal{J}_p$. Denote the Minkowski-dimension of $\mathcal{J}_p$ by $h$, and let $h < h^\prime < 2$. Then the
$h^\prime$-dimensional measure of $\mathcal{J}_p$ is $0$, hence for $\epsilon >0$ sufficiently small we have that
$$
N_\epsilon < \frac{1}{\epsilon^{h^\prime}}.
$$
Hence for $m$ sufficiently large the $\delta_m$-neighborhood of $\mathcal{J}_p$ can be covered by $\epsilon^{-h^\prime}=\delta_m^{-h^\prime}$ disks of radius $2 \delta_m$, which means that the area of
this neighborhood is at most
$$
\frac{4\pi \delta_m^2}{\delta_m^{h^\prime}} = 4\pi \delta_m^{2-h^\prime}.
$$
The statement follows.
\end{proof}

\noindent{\bf Proof of Proposition \ref{prop:area}.} Combining Lemma \ref{lemma:Mink} with the fact that the sets $V_m$ are contained in
$\delta_m$-neighborhoods of $\mathcal{J}_p$ with exponentially decreasing $\delta_m$, proves Proposition \ref{prop:area}. \hfill $\square$

\begin{corollary}
We have that
$$
\sum_{m \in \mathbb N} \mathrm{Area}(V_m) < \infty.
$$
\end{corollary}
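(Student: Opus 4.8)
The plan is to deduce this immediately from Proposition \ref{prop:area}. That proposition gives constants $C_0 > 0$ and $\gamma \in (0,1)$ such that $\mathrm{Area}(V_m) \leq C_0 \gamma^m$ for all sufficiently large $m$, say $m \geq m_1$. For the finitely many remaining indices $m < m_1$, I first observe that each $V_m$ has finite area: since $W_0 \supseteq \{|z| > R\}$ and $U_m = p^{-m}(W_0) \supseteq \{|z| > R\}$ (as $\{|z|>R\}$ is forward-invariant and hence contained in $W_0 \subseteq U_m$), the complement satisfies $V_m \subseteq \overline{D(0,R)}$, so $\mathrm{Area}(V_m) \leq \pi R^2 < \infty$. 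Thus the initial segment $\sum_{m < m_1} \mathrm{Area}(V_m)$ is a finite sum of finite terms.

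It then remains to bound the tail by a convergent geometric series:
$$
\sum_{m \geq m_1} \mathrm{Area}(V_m) \leq \sum_{m \geq m_1} C_0 \gamma^m = \frac{C_0 \gamma^{m_1}}{1-\gamma} < \infty.
$$
Adding the two pieces gives $\sum_{m \in \mathbb{N}} \mathrm{Area}(V_m) < \infty$. There is no real obstacle here: all the genuine work has been carried out in Proposition \ref{prop:area}, and the corollary is simply the summation of the resulting geometric bound, with the only minor point being the observation that each individual $V_m$ lies in the bounded disk $\overline{D(0,R)}$ so that the finitely many small-$m$ terms cause no trouble.
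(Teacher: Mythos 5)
Your proof is correct and is exactly the intended deduction: the paper states this as an immediate corollary of Proposition \ref{prop:area}, summing the exponentially decaying bound as a geometric series. Your additional observation that $V_m \subseteq \overline{D(0,R)}$ (so the finitely many initial terms are finite) is a valid and harmless bit of bookkeeping that the paper leaves implicit.
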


It is not reasonable to expect this corollary to hold for arbitrary polynomials, as $\mathcal{J}_p$ may have positive measure. It would be interesting to know whether $\sum_{m \in \mathbb N} \mathrm{Area}(V_m)$ is necessarily finite when $\mathcal{J}_p$ is assumed to have Hausdorff dimension strictly less than $2$.
Note that if the filled Julia set $K_p$ has no interior, then $V_m$ can be defined by $\{G \le \frac{1}{d^m}\}$, where $G$ is the Green's function. Leaving the setting of Green's functions arising from complex dynamics, we note that there do exist compact subsets $K \subset \mathbb C$ with Hausdorff dimension strictly smaller than $2$ for which the Green's function $G_K$ with logarithmic pole at infinity satisfies
$$
\sum_{m \in \mathbb N} \mathrm{Area} \{G_K \leq \frac{1}{2^m} \}=
\infty.
$$
Such a set $K$ can for example be constructed by taking the boundaries of a nested sequence of disks, and removing from these circles progressively smaller intervals. We thank S{\l}awomir Ko{\l}odziej for pointing out this construction to us.

\section{Forcing escape of critical points}\label{section:tracking}

In this section we will prove the estimate in Proposition
\ref{prop:final critical pointcount} as mentioned in the outline.
We consider the family of forward iterates $F^n$ restricted to a
fiber $\{w = w_0\}$ for a fixed $w_0 \neq 0$. Writing $w_n =
\lambda^n w_0$ and $f_n(z) = f_{w_n}(z) = f(z,w_n)$ we can
consider these iterates $F^n$ as compositions of a sequence of
polynomials, i.e.,
$$
F^n|_{\{w = w_0\}} = f_{n-1} \circ \cdots \circ f_0.
$$
Every polynomial $f_{w_i}$ introduces new critical points. It
turns out that by choosing $w_0$ carefully we can make sure that
all these critical points escape to the set $W$ defined in Section \ref{section:outline}, and moreover obtain
some control on the rate at which the critical points escape. In
fact, we will prove the existence of a subset $E \subset \mathbb
C_w$ of full measure for which \emph{all critical points} of maps
$f_{w_n}$, for $w \in E$, escape to $W$ at least at some
moderate rate, while \emph{most critical points} escape very fast.

\medskip

The critical points of $F$ form a finite union of irreducible
varieties in $\mathbb{C}^2$, as described in Section \ref{section:
Linearization maps}. We will study these varieties one at a time.
Given a fixed open neighborhood $U\subseteq \mathbb{C}$ of the
postcritical set of the polynomial $p(\cdot) = f(\cdot,0)$, we
will prove the following:

\begin{prop}\label{prop:near-final critical pointcount}
Let $K$ be an irreducible critical variety. Then there exists a set $E_K\subseteq \mathbb{C}$, of full measure in some neighbourhood of the origin, with the following property:

For all $\eta>0$ and $w \in E_K$ we have:
$$
\#\left\{s \leq n:~ \exists (z,\lambda^s w)\in K \mathrm{\, with
\,} F_1^{n-s}(z, \lambda^s w) \notin W_0 \cup U \right\} \leq
\eta \log n +C_{4},
$$
where $C_{4}$ is a constant depending on $K$, $\eta$, $U$ and $w$.
\end{prop}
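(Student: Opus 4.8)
The plan is to track a single irreducible critical variety $K$ through the linearization map $\Phi$ constructed in Section~\ref{section: Linearization maps}, and to show that, for all but finitely many indices $s$, the point obtained by starting at the slice $(z,\lambda^s w)\in K$ and iterating $F$ a further $n-s$ times escapes into $W_0\cup U$ within $O(\eta\log n)$ of the slices $s$ that fail. We distinguish two cases according to whether the critical point $x_0 = K\cap\{w=0\}$ lies in the Fatou set or the Julia set of $p$. If $x_0\in\mathcal F_p$, then its orbit converges to an attracting cycle or to infinity, hence enters $W_0$ after a bounded number of steps; by continuity, for every slice $\lambda^s w$ with $s$ large, the corresponding point on $K$ also enters $W_0$ after a uniformly bounded number of steps, so the set in question is finite and the bound holds trivially with $\eta\log n$ absorbed into $C_4$. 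The substantive case is $x_0\in\mathcal J_p$.

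In the Julia-set case, recall from Remark~\ref{remark: linearization F} that after passing to an iterate and changing coordinates near the repelling fixed point $x_r=p^r(x_0)$ with multiplier $\mu$, $|\mu|>1$, we have a linearization $\Phi\colon\mathbb C\to\Sigma_F^u(x_r)=\mathbb C_z$ of local order $k$ with $F^k\circ\Phi = \Phi\circ(\mu\,\cdot)$, and $F^{kn}(\gamma(\mu^{-n}t))\to\Phi(t)$ exponentially fast on compact sets, where $\gamma$ parametrizes $K$. Fix $w$ (to be chosen in a full-measure set below). For a slice index $s$, the relevant point on $K$ is $\gamma(\mu^{-a}\,t_s)$ for an appropriate $a\sim s/k$ and $t_s$ comparable to $\lambda^{s} w$ up to the bounded coordinate-change factors; iterating $F$ forward $n-s$ times lands near $\Phi(\mu^{b} t_s)$ for $b\sim (n-s)/k$, i.e.\ near $F^{j}(\Phi(t_s))$ for a suitable $j$ growing linearly in $n-s$. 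The key point is then: \emph{if} $\Phi(t_s)$ lies in the basin of infinity (resp.\ in an attracting basin) of $p$, then for $j$ larger than some threshold depending on how deep $\Phi(t_s)$ sits, $F^{j}(\Phi(t_s))$ — and hence the true orbit point, by the exponential convergence $\Phi_n\to\Phi$ — lies in $W_0$. So the slices $s$ that contribute to the count are exactly those for which either $\Phi(t_s)$ has not yet escaped into $W_0\cup U$ after $n-s$ steps, or $\Phi(t_s)\in\mathcal J_p$ but the orbit has not yet reached $U$. Using Lemma~\ref{lemma:Cirkelverlaten} together with Proposition~\ref{prop:area}: the point $\Phi(t_s)$ fails to reach $W$ within $m$ steps only if it lies in $V_m$ (up to the $C_1M^{-m}$-neighborhood, which is handled since $|t_s|$ is exponentially small in $s$), and $\mathrm{Area}(V_m)\le C\gamma_0^m$. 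The values $t_s$ run, up to bounded factors, along a geometric sequence $\lambda^s w$; the corresponding sequence $\Phi(t_s)$ converges to $\Phi(0)$ (a point of $\mathcal J_p$, namely $x_r$ up to coordinates), and one counts how many $s\le n$ have $\Phi(t_s)\in V_{m(s)}$ where $m(s)\sim(n-s)/k$. This is a Borel–Cantelli / summable-measure argument in $w$: define $E_K$ to be the set of $w$ for which $\sum_s \mathbf 1[\Phi(\lambda^s w)\in V_{m}]$ grows slower than $\eta\log n$; using $\sum_m\mathrm{Area}(V_m)<\infty$ (the Corollary) and the fact that $w\mapsto\Phi(\lambda^s w)$ distributes area comparably (only finitely many critical values of $\Phi$, and $\Phi$ non-constant), a Fubini/Borel–Cantelli estimate shows the complement of $E_K$ has measure zero in a neighborhood of $0$.

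The main obstacle, and the step requiring genuine care, is the bookkeeping that converts "$\Phi(t_s)\in V_m$ with $m$ of size $(n-s)/k$" into the logarithmic bound $\eta\log n+C_4$ on the number of bad $s\le n$, \emph{uniformly in $n$}, for a fixed $w$. The point is that for each fixed $n$ we need $\#\{s\le n:\Phi(\lambda^s w)\in V_{\lfloor(n-s)/k\rfloor}\}\le\eta\log n+C_4$; since $\sum_s\sum_n\Pr_w[\cdots]$ must be controlled, one sets up the bad-$w$ set as a countable union over dyadic scales of $n$ and over thresholds, and invokes that $\sum_{m\ge M}\mathrm{Area}(V_m)\to 0$ exponentially, so that for a full-measure set of $w$ only $s$ with $n-s=O(1)$, together with at most $O(\log n)$ exceptional "resonant" $s$ (those for which $\lambda^s w$ happens to hit the small remaining measure), can be bad. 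A second, milder obstacle is handling the finitely many critical \emph{values} of $\Phi$ and the points where $\Phi(t_s)$ might land inside $U$ (the neighborhood of the post-critical set) for infinitely many $s$: here one uses that $\Phi(0)=x_r\notin$ post-critical set by the reduction at the start of Section~\ref{section:outline}, so $\Phi(\lambda^s w)\notin U$ once $s$ is large, contributing only to $C_4$. Assembling the $q$ varieties, one takes $E_K$ for each and the final set in Proposition~\ref{prop:final critical pointcount} will be their (finite) intersection after summing the $O(\eta\log n)$ bounds.
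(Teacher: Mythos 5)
Your overall framework---reduce to the Julia-set case, use the linearization $\Phi$ with $F^k\circ\Phi(t)=\Phi(\mu t)$, pull back the sets $V_m$ (whose areas decay exponentially by Proposition \ref{prop:area}) under $\Phi$, and run a Borel--Cantelli argument in the parameter to define $E_K$---is exactly the first half of the paper's proof. But there is a genuine gap at the step you yourself flag as ``the main obstacle'': this argument alone cannot produce the bound $\eta\log n + C_4$ for \emph{arbitrary} $\eta>0$. The summable-measure argument forces you to take $m(s)\sim 2\log s/|\log\alpha|$ (so that $\sum_s\alpha^{m(s)}<\infty$), where $\alpha$ is the decay rate of $\mathrm{Area}\bigl((\Phi|_{D(0,\epsilon)})^{-1}(V_m)\bigr)$. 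Since $j(s)$ grows linearly in $s$, the escape condition $n-s\ge kj(s)+m(s)+1$ then fails precisely for $s$ in a window of length comparable to $m(n)\asymp\log n$, with a proportionality constant determined by $\alpha$, $k$, $l$, $|\lambda|$ and $|\mu|$. That constant is not at your disposal: you get $C_{10}\log n$ for some fixed $C_{10}$, not $\eta\log n$. Your claim that only $s$ with $n-s=O(1)$ plus ``$O(\log n)$ resonant $s$'' can be bad is exactly this uncontrolled bound, and the arbitrariness of $\eta$ is essential downstream, since the deduction of Proposition \ref{prop:final critical pointcount} requires the exponent $q\eta\log d$ in $d^{\,q\eta\log n+C}=d^C n^{q\eta\log d}$ to be made less than $1/2$.

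The paper closes this gap with a second, independent ingredient that your proposal does not contain: an equidistribution argument. The rescaled parameters $\mu^{j(s)}\nu^s$ live in a fixed fundamental annulus, and under the covering map to the torus $T=\mathbb{C}/(2\pi i\mathbb{Z}+\log\mu\,\mathbb{Z})$ they become the orbit $\{sy\}$ of a translation, which is uniquely ergodic on the closure of its orbit (a coset of a closed subgroup). Since $\Phi^{-1}(\mathcal{J}_p)$ is closed, $\mu$-invariant and of measure zero, for almost every coset one can choose an open set $O$ of relative measure $\ge 1-\eta$ with $\overline{O}\cap\Psi(\Phi^{-1}(\mathcal{J}_p))=\emptyset$; compactness then yields a single level $\ell=\ell(u,\eta)$ with $\Psi^{-1}(\overline{O})\subseteq\Phi^{-1}(U_\ell)$, and unique ergodicity guarantees that in every sufficiently long interval of consecutive $s$ at most a $2\eta$-fraction miss $O$. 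Applied inside the $C_{10}\log n$-window produced by the Borel--Cantelli step, this is what shrinks the count to $\eta\log n+C_4$. Two smaller omissions: you do not treat the case where $F^r(K)$ lies in the stable manifold $\Sigma_F^s(x_r)$ (where $\Phi$ degenerates but the conclusion is immediate), and your Borel--Cantelli sets should be defined via the rescaled points $\mu^{j(s)}\nu^s u$ in the fixed annulus rather than via $\Phi(\lambda^s w)\to\Phi(0)\in\mathcal{J}_p$, since the exponential area decay of $\Phi^{-1}(V_m)$ is only usable on a fixed compact set.
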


A consequence is the main result of this section.

\begin{prop}\label{prop:final critical pointcount}
There exists a set $E\subset \mathbb{C}$, of full measure in some neighborhood of
the origin, with the following property: For all $w \in E$ there
exists a constant $C_2(w,U)$ such that for all $n \in \mathbb{N}$
we have
$$
\#\left\{z:~ \frac{\partial F_1^n}{\partial z}(z,w)=0 \mathrm{\;
and \;} F_1^n(z,w) \notin W_0 \cap U \right\} \leq
C_2\sqrt{n}.
$$
\end{prop}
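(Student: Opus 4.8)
The plan is to deduce Proposition \ref{prop:final critical pointcount} from Proposition \ref{prop:near-final critical pointcount} by sorting the critical points of $F_1^n(\cdot,w)$ according to the \emph{generation} in which they appear. First I would put $E:=\bigcap_K E_K$, the intersection over the finitely many irreducible critical varieties $K$; as a finite intersection of sets that are of full measure in a neighbourhood of $0$, this $E$ is again of full measure in a (possibly smaller) neighbourhood of the origin. Fix $w\in E$, and write $w_j=\lambda^j w$, $f_j=f(\cdot,w_j)$, so that $F_1^n(\cdot,w)=f_{n-1}\circ\cdots\circ f_0$. By the chain rule, a point $z$ with $\frac{\partial F_1^n}{\partial z}(z,w)=0$ has a least index $s\in\{0,\dots,n-1\}$ for which $\frac{\partial f}{\partial z}(F_1^s(z,w),w_s)=0$, i.e. for which $(F_1^s(z,w),w_s)$ lies on some critical variety $K$; call $s$ the generation of $z$ and $K$ its variety. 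Since $F_1^{n-s}(F_1^s(z,w),w_s)=F_1^n(z,w)$, if moreover $F_1^n(z,w)\notin W_0\cup U$ then $s$ is counted in Proposition \ref{prop:near-final critical pointcount} for the variety $K$. For a variety $K$ based at a critical point $x_0\in\mathcal F_p$ of $p$, after shrinking the neighbourhood of $0$ on which $E$ lives, the critical point $c$ of $f_s$ on $K$ is so close to $x_0$ that the $F$-orbit of $(c,w_s)$ shadows the $p$-orbit of $x_0$ (the second coordinate only shrinking) until the latter has entered some $W_y$; since the preceding segment of the $p$-orbit of $x_0$ lies in the post-critical set, hence in $U$, one gets $F_1^{n-s}(c,w_s)\in W_0\cup U$ for all $n$ and $s$, so these varieties contribute no bad generations. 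Hence only the finitely many $K$ with $x_0\in\mathcal J_p$ matter, and for each of them Proposition \ref{prop:near-final critical pointcount}, applied with a fixed $\eta$, bounds its set of bad generations by $\eta\log n+C_4(K)$.

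It remains to bound, for each relevant variety $K$ and each bad generation $s$, the number of generation-$s$ critical points $z$ of $F_1^n(\cdot,w)$ with $F_1^n(z,w)\notin W_0\cup U$; these are exactly the $F_1^s(\cdot,w)$-preimages of the at most $d-1$ critical points $c$ of $f_s$ on $K$ for which $F_1^{n-s}(c,w_s)\notin W_0\cup U$. Here I would use that such a $z$ is strongly constrained: because $\overline{F(W)}\subseteq W$ and $F_1^n(z,w)\notin W_0$, the whole orbit $z,f_0(z),\dots,F_1^n(z,w)$ avoids $W$, so Lemma \ref{lemma:Cirkelverlaten} forces $F_1^t(z,w)\in V_{m(t)}$ whenever $|w_t|<C_1M^{-m(t)}$; thus the orbit is confined to neighbourhoods of $\mathcal J_p$ whose diameters (Theorem \ref{thm:CJY}) and whose areas (Proposition \ref{prop:area}) shrink exponentially over the central part of the time interval. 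Combining this confinement with the linearization map $\Phi_K$ of Section \ref{section: Linearization maps} — which tracks precisely how the critical orbit on $K$ leaves the saddle $(x_r,0)$ along the unstable manifold, hence tells us which of the finitely many critical points of $f_s$ on $K$ still have a non-escaping tail after $n-s$ steps, and also controls the local degree with which $F_1^s(\cdot,w)$ folds the confined region over $c$ — I would show that the total number of such preimages, summed over the $O(\log n)$ bad generations of the finitely many relevant varieties, is $O(\sqrt n)$. As $C_2$ may depend on $w$ and $U$, this is the asserted inequality; the exponent $1/2$ is deliberately generous, chosen to leave room in the area estimates (Proposition \ref{prop:area inv}) used later in the proof of Proposition \ref{prop:main}.

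The main obstacle is this last estimate. Since $F_1^s(\cdot,w)$ has degree $d^s$, a single bad generation could a priori contribute exponentially many critical points, and the crux is to show that the confinement above — together with the expansion of $p$ on $\mathcal J_p$ and the normal form near $(x_r,0)$ provided by $\Phi_K$ — forces all but polynomially many of those preimages to have their $(n-s)$-step image lie back in $W_0\cup U$. Making the three competing rates fit together (the shrinking of the trapping neighbourhoods $V_m$, the expansion of $p$ on $\mathcal J_p$, and the rate at which critical orbits on $K$ emerge from the saddle) is the technical heart of the matter; everything else is bookkeeping with the generation decomposition and Proposition \ref{prop:near-final critical pointcount}.
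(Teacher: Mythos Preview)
Your setup — $E=\bigcap_K E_K$, the generation decomposition, and the observation that only varieties through $\mathcal J_p$ matter — is exactly right. The gap is in your second paragraph. If $s$ is a bad generation and $c$ is a critical point of $f_s$ with $F_1^{n-s}(c,w_s)\notin W_0\cup U$, then \emph{every} preimage $z\in (F_1^s(\cdot,w))^{-1}(c)$ satisfies
\[
F_1^n(z,w)=F_1^{n-s}\bigl(F_1^s(z,w),w_s\bigr)=F_1^{n-s}(c,w_s)\notin W_0\cup U,
\]
because all such $z$ share the same tail from step $s$ onward. So the confinement of the orbit segment $z,f_0(z),\ldots,F_1^{s-1}(z,w)$ before step $s$ cannot force a single one of these $d^s$ preimages back into $W_0\cup U$; there is nothing to balance, and the programme you outline in your last paragraph cannot be carried out.

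The paper's argument avoids counting preimages altogether and is much shorter. The point is that a generation $s$ is good precisely when $f_s$ has no critical point whose forward image under $F_1^{n-s}$ lands outside $W_0\cup U$; equivalently, the restriction of $f_s$ to the pullback of $\mathbb C\setminus(W_0\cup U)$ is unramified. Thus in the composition $F_1^n=f_{n-1}\circ\cdots\circ f_0$, restricted to the relevant domain, only the $N\le q\eta\log n+C$ bad factors can branch (each with at most $d-1$ critical points, hence degree at most $d$), while the good factors are local biholomorphisms there. This bounds the number of critical points of the restricted composition by $d^{N}$. The second idea you are missing is to exploit the \emph{arbitrariness} of $\eta$ in Proposition~\ref{prop:near-final critical pointcount}: choosing $\eta$ so small that $q\eta\log d<\tfrac12$ gives
\[
d^{q\eta\log n+C}=d^{C}\,n^{q\eta\log d}\le C_2\sqrt n.
\]
No further use of $\Phi_K$, Lemma~\ref{lemma:Cirkelverlaten}, or Proposition~\ref{prop:area} is needed at this stage; all of that machinery was already consumed in proving Proposition~\ref{prop:near-final critical pointcount}.
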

\begin{proof}
Define $E=\bigcap_{K}E_K$, let $w \in E$ and let $\eta>0$. Pick
$\epsilon'
>0$ such that $E$ has full measure in $D(0,\epsilon')$. We have at
most finitely many critical varieties: $K_1$ up to $K_q$. Applying
Proposition \ref{prop:near-final critical pointcount} for each of
them gives us:
$$
\#\left\{s \leq n:~ \exists (z,\lambda^s w) \in \bigcup K_i
\mathrm{\,with\,} F_1^{n-s}(z, \lambda^s w) \notin W_0 \cup U
\right\}  \leq q\eta \log n +C,
$$
where $C$ depends only on $\eta$, $U$ and $w$. As
$f_w(z)=z^d+a_{d-1}(w)z^{d-1}+\ldots+a_0(w)$, each $\{\lambda^s
w\}$-fiber contains $d-1$ critical points of $f_w(z)$, counting with
multiplicities. Then the restricted function
$$
F_1^n \colon (F^n)^{-1}\bigl(\left(\mathbb{C} \setminus (W_0 \cup U)\right) \times
\{\lambda^n w\} \bigr) \rightarrow \left(\mathbb{C} \setminus (W_0 \cup U)\right)
$$
is a composition of $n$ holomorphic functions. At most $q\eta \log
n +C$ of them can each have at most $d-1$ critical points. The total
number of critical points of this restricted function is therefore
at most
$$
d^{q\eta \log n +C}=d^{C}n^{q\eta \log d}.
$$
The desired result is obtained by choosing $\eta$ small enough.
\end{proof}

The rest of this section is devoted to proving Proposition
\ref{prop:near-final critical pointcount}. This proposition will
be proved in two parts. First, we will prove that for almost every
$w_0 \in \mathbb C$ and all $n\in \mathbb N$, all critical points
of $f_{w_n}$ escape to the set $W$ in at most $C \cdot \log(n)$
steps, for some uniform $C>0$. We will not be able to control the
constant $C$ though, hence we will not yet be able to deduce that
the estimate in Proposition \ref{prop:near-final critical
pointcount} holds for \emph{any} $\eta >0$. However, we will also
prove that for almost every $w_0 \in \mathbb C$ and \textsl{most}
$n \in \mathbb N$, the critical points of $f_{w_n}$ escape to the
set $W$ in a number of steps that does not depend on $n$. By
choosing a sufficiently strong interpretation of ``most $n \in
\mathbb N$'' it will follow that $\eta$ in Proposition
\ref{prop:near-final critical pointcount} can be chosen
arbitrarily small.

\medskip

Let $K$ be one of the irreducible analytic varieties of critical
points of $F$ in $\mathbb{C} \times D(0,\epsilon)$, as
described in Remark \ref{remark: linearization F}. Then $K$
intersects the $\{w=0\}$-fiber in a single point $x_0 \in \mathbb{C}_z$. If $x_0 \in \mathcal{F}_p$, then $F^r(x_0) \in W_0$ for some $r \in \mathbb{N}$. Shrinking $\epsilon$ if necessary, we find that $F^r(K) \subseteq W$. In this setting, Proposition \ref{prop:near-final critical
pointcount} follows immediately. We will therefore assume that $x_0 \in
\mathcal{J}_p \times \{0\}$ from now on. Its image $x_r=p^r(x_0)$
is a repelling fixed point. We define $\mu=p'(x_r)$.

Since $K$ intersects $\{w=0\}$ in a unique point, we can locally parameterize $K$ by
$$
K = \left\{\bigl( \gamma_1(u),\gamma_2(u)\bigr): u \in
D\bigl(0,\epsilon^{1/l}\bigr)\right\},
$$
where $\gamma_2(t)=t^l$ for some $l \geq 1$.

\medskip

If $F^r(K)$ is contained in the stable variety $\Sigma_F^s(x_r)$
of $F$ at $x_r$, then so is $F^n(K)$ for any $n \geq r$. The
points in $K$ all converge to $(x_r,0)$ along this stable variety.
In this setting, Proposition \ref{prop:near-final critical
pointcount} follows immediately.

For the rest of this section, we will therefore assume that
$F^r(K) \nsubseteq \Sigma_F^s(x_r)$. Remark \ref{remark:
linearization F} then gives us the linearization maps
$$
\Phi_j(t)=F^{kj}\left(\gamma_1(\mu^{-j}t),(\mu^{-j}t)^l
\right)
$$
that converge uniformly and exponentially fast on compact subsets,
to a holomorphic function $\Phi \colon \mathbb{C} \rightarrow
\mathbb{C}_z$ of local degree $k$ for some $k \geq 1$.

\medskip

We wish to study the points of the critical variety $K$ in a
sequence of fibers $\mathbb{C} \times \{w\}$, $\mathbb{C} \times
\{\lambda w\}$, $\mathbb{C} \times \{\lambda^2 w\}$, $\ldots$
where $w \in D(0,\epsilon)$. This will be easier if we look at
pre-images of these critical points under $\gamma$: pick $\nu \in
\mathbb{C}$ such that $\nu^l=\lambda$ and consider the sequences
$u, \nu u, \nu^2 u \ldots$ for $u \in D(0,\epsilon^{1/l})$. By
abuse of notation, we replace the constant $\epsilon^{1/l}$ by the
letter $\epsilon$ again. For $u \in D(0,\epsilon)$, the orbit of
$\gamma(\nu^s u)$ under $F$ can now be studied using the functions
$\Phi_j$ and $\Phi$:
\begin{align*}
F^n\bigl(\gamma(\nu^s u)\bigr)&=F^{n-kj}\left(\Phi_j(\mu^j \nu^s
u)\right) \text{ whenever } n \geq kj\\
&\approx F^{n-kj}\left(\Phi(\mu^j \nu^s u)\right).
\end{align*}
In order to use exponential estimates on the rate of convergence
of $\Phi_j$ to $\Phi$, we ask that $|\mu^j \nu^s u| \leq \epsilon$,
while having $j$ run to infinity. We therefore define $j(s)$ to be
the unique integer for which
$$
|\mu|^{-1}<|\mu^{j(s)} \nu^s| \leq 1.
$$
Lemma \ref{lemma:Cirkelverlaten} now implies the following:

\begin{corollary}\label{cor:j(s) en m}
There exist constants $C_5, C_6 > 0$ such that for all $s,m \in
\mathbb{N}$ satisfying $m < C_5j(s)-C_6$ and for all $u \in
D(0,\epsilon)$, we have:
$$
\text{If }\Phi\bigl(\mu^{j(s)} \nu^s u\bigr) \in U_m, \quad \text{then }
F^{m+1+kj(s)}\bigl(\gamma(\nu^s u)\bigr) \in W.
$$
\end{corollary}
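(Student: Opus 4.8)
\textbf{Proof plan for Corollary \ref{cor:j(s) en m}.} The plan is to deduce everything from Lemma~\ref{lemma:Cirkelverlaten} using the exponential convergence $\Phi_j\to\Phi$ established in this section. Fix $s$, write $j=j(s)$, and set $t=\mu^{j}\nu^{s}u$. Since $|\mu^{j}\nu^{s}|\le 1$ and $|u|<\epsilon$, the point $t$ lies in the fixed compact disk $\overline{D(0,\epsilon)}$. The first step is the algebraic identity
$$
\Phi_{j}\bigl(\mu^{j}\nu^{s}u\bigr)=F^{kj}\bigl(\gamma(\nu^{s}u)\bigr),
$$
immediate from the definition $\Phi_{j}(\tau)=F^{kj}\bigl(\gamma(\mu^{-j}\tau)\bigr)$; combined with the relation $F^{n}(\gamma(\nu^{s}u))=F^{n-kj}(\Phi_{j}(\mu^{j}\nu^{s}u))$, valid for $n\ge kj$, applied with $n=m+1+kj$, this gives
$$
F^{m+1+kj(s)}\bigl(\gamma(\nu^{s}u)\bigr)=F^{m+1}\bigl(\Phi_{j}(t)\bigr).
$$
So it suffices to produce the inclusion $F^{m+1}(\Phi_{j}(t))\in W$, and for that I will verify the hypothesis $\mathrm{d}(\Phi_{j}(t),U_m)<C_{1}M^{-m}$ of Lemma~\ref{lemma:Cirkelverlaten}.

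By assumption $\Phi(t)\in U_m$, and since $U_m$ is regarded as a subset of $\mathbb{C}_z\subset\mathbb{C}^{2}$ while $\Phi$ takes values in $\mathbb{C}_z$, we have
$$
\mathrm{d}\bigl(\Phi_{j}(t),U_m\bigr)\le\mathrm{d}\bigl(\Phi_{j}(t),\Phi(t)\bigr)\le\sup_{|\tau|\le\epsilon}\bigl|\Phi_{j}(\tau)-\Phi(\tau)\bigr|.
$$
By Proposition~\ref{prop:general linearization} (see also Remark~\ref{remark: linearization F}) there are constants $C>0$ and $0<\rho<1$ such that $\sup_{|\tau|\le\epsilon}|\Phi_{j}(\tau)-\Phi(\tau)|\le C\rho^{j}$ for all $j\ge j_1$, where $j_1$ is large enough that in addition each $\Phi_{j}$ with $j\ge j_1$ is defined (and holomorphic) on a neighbourhood of $\overline{D(0,\epsilon)}$ — which holds already for $j\ge 1$, as $\mu^{-j}\overline{D(0,\epsilon)}$ is contained in the domain of $\gamma$. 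Hence whenever $j\ge j_1$ and $C\rho^{j}<C_{1}M^{-m}$, Lemma~\ref{lemma:Cirkelverlaten} applied to the point $\Phi_{j}(t)$ yields $F^{m+1}(\Phi_{j}(t))\in W$, which is exactly the desired conclusion.

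It remains to repackage ``$j\ge j_1$ and $C\rho^{j}<C_{1}M^{-m}$'' into the stated linear condition. Taking logarithms and using $M>1$, the inequality $C\rho^{j}<C_{1}M^{-m}$ is equivalent to
$$
m<\frac{\log(1/\rho)}{\log M}\,j+\frac{\log(C_{1}/C)}{\log M}.
$$
Set $C_{5}=\log(1/\rho)/\log M>0$ and choose any $C_{6}>0$ with $C_{6}\ge -\log(C_{1}/C)/\log M$ and $C_{6}\ge C_{5}\,j_1$. Then the hypothesis $m<C_{5}j(s)-C_{6}$ simultaneously implies the displayed inequality and, because $m\ge 0$ and $C_{5}>0$, forces $j(s)>j_1$; so the previous paragraph applies. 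The remaining cases $j(s)\le j_1$ (in particular all small $s$, where $j(s)$ may equal $0$) are vacuous, since then no natural number $m$ satisfies $m<C_{5}j(s)-C_{6}$.

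I do not expect a genuine obstacle here — the argument is essentially ``apply the exponential estimate, then apply Lemma~\ref{lemma:Cirkelverlaten}''. The only points requiring mild care are the bookkeeping of the two thresholds (the one from the exponential estimate, and $j\ge 1$ ensuring $\Phi_j$ is defined on $\overline{D(0,\epsilon)}$), both absorbed into $C_{6}$, and the fact that Lemma~\ref{lemma:Cirkelverlaten} is invoked at the point $\Phi_{j}(t)$ without a priori control on its location; this is granted by the unrestricted statement of that lemma, and in any case $\Phi_{j}(t)$ stays in a fixed bounded region because it converges to $\Phi(t)$ with $|t|\le\epsilon$.
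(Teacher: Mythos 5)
Your proposal is correct and follows essentially the same route as the paper: identify $F^{m+1+kj(s)}(\gamma(\nu^s u))$ with $F^{m+1}(\Phi_{j(s)}(\mu^{j(s)}\nu^s u))$, bound $\mathrm{d}(\Phi_{j(s)}(t),U_m)$ by $\|\Phi_{j(s)}-\Phi\|_{D(0,\epsilon)}$ using the exponential convergence, and invoke Lemma~\ref{lemma:Cirkelverlaten}, translating the resulting condition into $m<C_5 j(s)-C_6$. Your version merely spells out the logarithmic bookkeeping that the paper leaves implicit.
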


\begin{proof}
If $\Phi(\mu^{j(s)} \nu^s u) \in U_m$ and $j(s)$ is large enough
to ensure that $||\Phi_{j(s)}-\Phi||_{D(0,\epsilon)} \leq C_1
M^{-m}$, then $F^{m+1+kj(s)}\left(\gamma(\nu^s
u)\right)=F^{m+1}\left(\Phi_{j(s)}(\mu^{j(s)}\nu^s u) \right) \in
W$ by Lemma \ref{lemma:Cirkelverlaten}. By the uniform convergence of $\Phi_j$ to $\Phi$ on
$D(0,\epsilon)$, the requirement that
$||\Phi_{j(s)}-\Phi||_{D(0,\epsilon)} \leq C_1 M^{-m}$ can be
translated to $m<C_5j(s)-C_6$ for some uniform constants $C_5$ and
$C_6$.
\end{proof}

Corollary \ref{cor:j(s) en m} motivates us to take a closer look
at $\left(\Phi|_{D(0,\epsilon)}\right)^{-1}(U_m)$.

\medskip

\subsection{All critical points escape slowly}

The complement of $\left(\Phi|_{D(0,\epsilon)}\right)^{-1}(U_m)$
within $D(0,\epsilon)$ is exactly
$\left(\Phi|_{D(0,\epsilon)}\right)^{-1}(V_m)$. By Proposition
\ref{prop:area} the area of $V_m$ decreases exponentially with
$m$. As $\Phi$ is a nonconstant holomorphic function, the area of
$\left(\Phi|_{D(0,\epsilon)}\right)^{-1}(V_m)$ also decreases
exponentially with $m$, although not necessarily at the same rate.

\medskip

Using this information, we can apply Corollary \ref{cor:j(s) en m}
to show that given $s \in \mathbb{N}$, the critical point
$\gamma(\nu^s u)$ moves away from the Julia set $\mathcal{J}_p$
fairly quickly for most values $u \in D(0,\epsilon)$.

\begin{lemma}\label{lemma:goede t1}
There exist uniform constants $C_7>0$ and $\alpha<1$ such that
$$
\textrm{Area}\left\{u \in D(0,\epsilon) ~:~
 F^{kj(s)+m+1}\bigl(\gamma(\nu^s u)\bigr) \notin W \right\} \leq C_7 \alpha^m
$$
for all $s,m \in \mathbb{N}$ with $m<C_5j(s)-C_6$.
\end{lemma}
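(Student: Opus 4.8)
The plan is to reduce the assertion, via the contrapositive of Corollary~\ref{cor:j(s) en m}, to the exponential decay of the area of a rescaled copy of $\bigl(\Phi|_{D(0,\epsilon)}\bigr)^{-1}(V_m)$, a fact already recorded in the paragraph preceding the lemma. So first I would transfer the problem to the linearization map. Fix $s,m\in\mathbb N$ with $m<C_5 j(s)-C_6$ and abbreviate $c_s:=\mu^{j(s)}\nu^s$, so that $|\mu|^{-1}<|c_s|\le 1$ by the definition of $j(s)$. For $u\in D(0,\epsilon)$ we have $|c_s u|\le|u|<\epsilon$, so Corollary~\ref{cor:j(s) en m} applies: if $\Phi(c_s u)\in U_m$, then $F^{kj(s)+m+1}\bigl(\gamma(\nu^s u)\bigr)\in W$. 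Taking contrapositives and using that $V_m$ is the complement of $U_m$,
$$
\bigl\{u\in D(0,\epsilon):~F^{kj(s)+m+1}\bigl(\gamma(\nu^s u)\bigr)\notin W\bigr\}\ \subseteq\ \bigl\{u\in D(0,\epsilon):~\Phi(c_s u)\in V_m\bigr\},
$$
so it is enough to bound the area of the set on the right.

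Next I would undo the rescaling $u\mapsto c_s u$. Setting $v=c_s u$, the right-hand set above is exactly $c_s^{-1}\bigl(\Phi^{-1}(V_m)\cap D(0,|c_s|\epsilon)\bigr)$. Since $u\mapsto c_s u$ is linear with complex derivative $c_s$, its real Jacobian is $|c_s|^2$, so this set has area $|c_s|^{-2}\,\mathrm{Area}\bigl(\Phi^{-1}(V_m)\cap D(0,|c_s|\epsilon)\bigr)$. Using $|c_s|\le 1$, so that $D(0,|c_s|\epsilon)\subseteq D(0,\epsilon)$, together with $|c_s|^{-2}<|\mu|^2$, we obtain the bound
$$
|\mu|^2\cdot\mathrm{Area}\bigl(\bigl(\Phi|_{D(0,\epsilon)}\bigr)^{-1}(V_m)\bigr).
$$

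By Proposition~\ref{prop:area} the area of $V_m$ decays exponentially in $m$, and, as observed above using that $\Phi$ is a non-constant holomorphic function, so does $\mathrm{Area}\bigl(\bigl(\Phi|_{D(0,\epsilon)}\bigr)^{-1}(V_m)\bigr)$; say it is at most $C\alpha^m$ with $C>0$ and $\alpha<1$. Putting $C_7:=|\mu|^2 C$ and keeping this $\alpha$ proves the lemma. It is worth noting that no constant in the argument depends on $s$: the only $s$-dependence entered through $c_s$, and it was absorbed by the uniform bounds $|\mu|^{-1}<|c_s|\le 1$ --- which is precisely why $j(s)$ was defined so that $|\mu^{j(s)}\nu^s|$ stays in the band $(|\mu|^{-1},1]$.

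I expect the genuinely substantial input to lie upstream rather than in this lemma. The fact used here as a black box --- that $\mathrm{Area}\bigl(\bigl(\Phi|_{D(0,\epsilon)}\bigr)^{-1}(V_m)\bigr)$ decays exponentially --- rests on Proposition~\ref{prop:area} together with the elementary observation that a non-constant holomorphic map with finitely many critical points on $\overline{D(0,\epsilon)}$, each of order at most some $k_0$, pulls a set of area $A$ back to a set of area $O\bigl(A^{1/k_0}\bigr)$, which at worst replaces the exponential rate $\gamma^m$ by $\gamma^{m/k_0}$. Granting that, the present lemma is essentially bookkeeping; the one point that needs a moment of care is the uniformity in $s$ discussed above.
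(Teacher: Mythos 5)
Your proof is correct and follows essentially the same route as the paper: take the contrapositive of Corollary~\ref{cor:j(s) en m}, observe that the exceptional set is contained in $\mu^{-j(s)}\nu^{-s}\bigl(\Phi|_{D(0,\epsilon)}\bigr)^{-1}(V_m)$, and use the exponential decay of $\mathrm{Area}\bigl(\bigl(\Phi|_{D(0,\epsilon)}\bigr)^{-1}(V_m)\bigr)$ established just before the lemma, with the band $|\mu|^{-1}<|\mu^{j(s)}\nu^s|\le 1$ supplying uniformity in $s$. Your Jacobian factor $|\mu|^{2}$ is in fact the correct one (the paper writes $|\mu|^{-1}$, an inconsequential slip in a multiplicative constant).
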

\begin{proof}
Corollary \ref{cor:j(s) en m} implies the inclusion
\begin{align*}
\left\{u \in D(0,\epsilon) ~:~ F^{N+kj(s)+m}\bigl(\gamma(\nu^s
u)\bigr) \notin W \right\} &\subseteq \left\{u \in D(0,\epsilon) ~:~
\Phi\bigl(\mu^{j(s)} \nu^s u\bigr) \notin U_m\right\}\\
&= D(0,\epsilon) \cap \mu^{-j(s)}\nu^{-s} \Phi^{-1}(V_m)\\
 &\subseteq
\mu^{-j(s)}\nu^{-s} \left(\Phi|_{D(0,\epsilon)}\right)^{-1}(V_m).
\end{align*}
The area of this last set is at most $|\mu|^{-1}
\textrm{Area}\left(\left(\Phi|_{D(0,\epsilon)}\right)^{-1}(V_m)\right)$,
which decreases exponentially with $m$.
\end{proof}

We will use Lemma \ref{lemma:goede t1} to find values of $u\in
D(0,\epsilon)$ for which nearly all critical points
$\{\gamma(\nu^s u)\}_{s \in \mathbb{N}}$ move away from
$\mathcal{J}_p$ fairly fast. Our strategy is to apply the
following well-known lemma:

\begin{lemma}\label{lemma:areasum}
Let $\{A_s\}_{n \in \mathbb{N}}$ be a sequence of subsets of a
measure space $X$ such that
$$
\sum_{s \in N} \textrm{measure}(A_s) < \infty.
$$
Then the set $E=\{u \in X ~|~ \exists N_u \in \mathbb{N},~ \forall
s \geq N_u,~ u \notin A_s \}$ has full measure.
\end{lemma}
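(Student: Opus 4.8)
The plan is to recognize Lemma~\ref{lemma:areasum} as the ``convergence half'' of the Borel--Cantelli lemma. First I would observe that $u \notin E$ precisely when for every $N \in \mathbb{N}$ there exists some $s \ge N$ with $u \in A_s$; in other words, the complement $X \setminus E$ is exactly the set of points lying in infinitely many of the $A_s$, which is the limit superior
\[
\limsup_{s \to \infty} A_s = \bigcap_{N \in \mathbb{N}} \bigcup_{s \ge N} A_s.
\]
Since each $A_s$ is measurable, this set is measurable as well, being obtained from the $A_s$ by a countable union followed by a countable intersection.

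Next I would estimate its measure from above. For every fixed $N$ we have the inclusion $\limsup_{s} A_s \subseteq \bigcup_{s \ge N} A_s$, and countable subadditivity of the measure gives
\[
\mathrm{measure}\Bigl(\bigcup_{s \ge N} A_s\Bigr) \le \sum_{s \ge N} \mathrm{measure}(A_s).
\]
By hypothesis the series $\sum_{s} \mathrm{measure}(A_s)$ converges, so the right-hand side is the tail of a convergent series and hence tends to $0$ as $N \to \infty$. Since the left-hand side bounds $\mathrm{measure}(\limsup_{s} A_s)$ for every $N$, we conclude $\mathrm{measure}(\limsup_{s} A_s) = 0$, i.e.\ $X \setminus E$ is a null set and $E$ has full measure.

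I expect no real obstacle here: the argument is a three-line application of countable subadditivity together with the vanishing of tails of convergent series. The only point deserving a word of care is the measurability of $E$, which is immediate from the countable set operations above; alternatively, one observes that $E$ differs from $X$ only by a subset of the null set $\limsup_{s} A_s$ and invokes the completion of the measure if necessary.
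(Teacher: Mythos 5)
Your proof is correct; it is the standard ``convergence half'' of the Borel--Cantelli lemma, identifying $X \setminus E$ with $\limsup_s A_s$ and killing its measure via countable subadditivity and the vanishing tails of a convergent series. The paper gives no proof at all --- it cites the lemma as well-known --- and your argument is precisely the canonical one that is being invoked.
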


To be able to use Lemma \ref{lemma:goede t1} in the setting of
Lemma \ref{lemma:areasum}, we need to pick an appropriate value
$m(s)$ for each $s \in \mathbb{N}$. Then we can apply Lemma
\ref{lemma:areasum} to the sets
$$
A_s=\left\{u \in D(0,\epsilon) ~:~
 F^{kj(s)+m(s)+1}\bigl(\gamma(\nu^s u)\bigr) \notin W \right\}.
$$
To make sure that the areas of the sets $A_s$ have a finite sum,
we need to pick $m(s)$ such that
$$
\sum_{s \in \mathbb{N}} C_7 \alpha^{m(s)} < \infty.
$$
On the other hand, it would be preferable to pick $m(s)$ as low as
possible. This gives stronger information on the rate at which a
critical point $\gamma(\nu^s u)$ (for $u \notin A_s$) moves
towards $W$. And finally, $m(s)$ needs to satisfy
$m(s)<C_5j(s)-C_6$ to be allowed to use the estimate in Lemma
\ref{lemma:goede t1}.

To satisfy all these requirements we define
$$
m(s) := \left\lceil \frac{2 \log s}{\log \alpha} -C_8
\right\rceil,
$$
where the constant $C_8>0$ is chosen such that $m(s)<C_5j(s)-C_6$
holds for all $s \in \mathbb{N}$. As $\alpha^{-m(s)}\leq
\alpha^{C_8} s^{-2}$, the areas of the sets $A_s$ will have their
finite sum. We have now proved the following corollary:

\begin{corollary}\label{cor:E full measure}
The set
$$
E_1(K)=\left\{u \in D(0,\epsilon)~\Big|~ \exists N_u \in \mathbb{N},~
\forall s \geq N_u,~ F^{kj(s)+m(s)+1}\bigl(\gamma(\nu^s u)\bigr) \in W
\right\}
$$
has full measure in $D(0,\epsilon)$.
\end{corollary}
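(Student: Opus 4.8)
The approach is to combine the area estimate of Lemma~\ref{lemma:goede t1} with the elementary measure-theoretic statement of Lemma~\ref{lemma:areasum}; since the sequence $m(s)$ has already been fixed above, essentially all of the analytic content lives in those two lemmas (and, behind Lemma~\ref{lemma:goede t1}, in Proposition~\ref{prop:area}), so what remains is bookkeeping. Concretely, I would work with the sets
$$
A_s=\bigl\{u\in D(0,\epsilon):F^{kj(s)+m(s)+1}\bigl(\gamma(\nu^s u)\bigr)\notin W\bigr\}
$$
introduced just before the statement, and observe that, unwinding definitions, the condition ``$F^{kj(s)+m(s)+1}(\gamma(\nu^s u))\in W$'' is exactly ``$u\notin A_s$''. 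Hence $E_1(K)$ coincides with the set $\{u\in D(0,\epsilon):\exists N_u\in\mathbb N,\ \forall s\ge N_u,\ u\notin A_s\}$ produced by Lemma~\ref{lemma:areasum} applied to $X=D(0,\epsilon)$ with this family $(A_s)_{s\in\mathbb N}$. It therefore suffices to show $\sum_s\mathrm{Area}(A_s)<\infty$.

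To estimate $\mathrm{Area}(A_s)$ I would first check the hypothesis of Lemma~\ref{lemma:goede t1}, namely $m(s)<C_5 j(s)-C_6$; this is exactly what the constant $C_8$ in the definition of $m(s)$ is there to guarantee. The point is that $j(s)$ grows linearly in $s$: since $|\mu|>1$ and $|\nu|<1$ (because $\nu^l=\lambda$ with $|\lambda|<1$), the defining inequality $|\mu|^{-1}<|\mu^{j(s)}\nu^s|\le1$ yields $j(s)=\frac{\log(1/|\nu|)}{\log|\mu|}\,s+O(1)$, whereas $m(s)=O(\log s)$, so taking $C_8$ large enough forces $m(s)<C_5 j(s)-C_6$ for every $s$. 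Granting this, Lemma~\ref{lemma:goede t1} gives $\mathrm{Area}(A_s)\le C_7\alpha^{m(s)}$, and $m(s)$ was chosen precisely so that $\alpha^{m(s)}$ is bounded by a constant times $s^{-2}$; hence $\sum_s\mathrm{Area}(A_s)\le C_7\sum_s\alpha^{m(s)}<\infty$, and Lemma~\ref{lemma:areasum} yields the full-measure conclusion.

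The only point needing any care is the competition between the two constraints on $m(s)$: on one hand $m(s)$ must grow at least like $\frac{2\log s}{\log(1/\alpha)}$ so that the areas are summable, and on the other it must stay below $C_5 j(s)-C_6$ so that Lemma~\ref{lemma:goede t1} is applicable. There is plenty of room here, since $j(s)$ grows \emph{linearly} while only \emph{logarithmic} growth of $m(s)$ is demanded, so I do not anticipate a genuine obstacle; the substantive work has already been carried out in the proofs of Lemma~\ref{lemma:goede t1} and Proposition~\ref{prop:area}, and the corollary is an immediate assembly of those results with Lemma~\ref{lemma:areasum}.
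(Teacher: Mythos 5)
Your argument is correct and is essentially identical to the paper's: the authors likewise apply Lemma~\ref{lemma:areasum} to the sets $A_s$, using Lemma~\ref{lemma:goede t1} together with the choice of $m(s)$ (and of $C_8$, which works because $j(s)$ grows linearly in $s$ while only logarithmic growth of $m(s)$ is needed) to get $\mathrm{Area}(A_s)\le C_7\alpha^{m(s)}\lesssim s^{-2}$ and hence a summable sequence of areas. No differences worth noting.
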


For points $\nu^s u$ with $u \in E_1(K)$ and $s \geq N_u$, we now
have a reasonable grasp of the behavior of $F^n(\gamma(\nu^s u))$
when $n \geq kj(s)$: the point will move to $W$ in at most another
$N+m(s)$ applications of the function $F$. The next lemma will
take a look at the setting when $n<kj(s)$. For any neighborhood
$U\subseteq \mathbb{C}$ of the post-critical set of $p$, we have the following:

\begin{lemma}\label{lemma:kleine n}
For $u \in D(0,\epsilon)$ we have
$$
F_1^n\bigl(\gamma(\nu^s u)\bigr) \in U
$$
whenever $C_9 \leq n < kj(s)-C_9$, for a constant $C_9(\epsilon,
U)$ that is independent of $s$.
\end{lemma}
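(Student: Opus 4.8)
The plan is to use that the first $\sim k\,j(s)$ iterates of the critical point $\gamma(\nu^s u)$ are governed by the linearization maps $\Phi_j$, and that consequently this orbit spends a long stretch of time near the saddle fixed point $(x_r,0)$ of $F$. Since $x_r=p^r(x_0)$ is a forward image of a critical point of $p$, it lies in the postcritical set and hence $x_r\in U$; moreover a small enough neighbourhood of $(x_r,0)$ is carried into $U\times\mathbb{C}$ by each of the first $k$ iterates of $F$, because $(x_r,0)$ is fixed. This will be what forces $F_1^n(\gamma(\nu^s u))\in U$ on the claimed range.

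First I would invoke the identity $F^n(\gamma(\nu^s u))=F^{n-kj}\bigl(\Phi_j(\mu^j\nu^s u)\bigr)$ for $n\ge kj$ recorded at the start of this section; writing $n=kj+p$ with $0\le p\le k-1$ and $j=\lfloor n/k\rfloor$ this reads $F^n(\gamma(\nu^s u))=F^p\bigl(\Phi_j(\mu^j\nu^s u)\bigr)$. By the defining relation $|\mu|^{-1}<|\mu^{j(s)}\nu^s|\le 1$ we get, for all $j\le j(s)$,
$$
|\mu^j\nu^s u|=|\mu|^{\,j-j(s)}\,|\mu^{j(s)}\nu^s|\,|u|<|\mu|^{-(j(s)-j)}\,\epsilon\le\epsilon,
$$
so the arguments of $\Phi_j$ stay in $\overline{D(0,\epsilon)}$, uniformly in $s$ and $u$.

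Next I would estimate $\bigl|\Phi_j(\mu^j\nu^s u)-(x_r,0)\bigr|$ by the triangle inequality through $\Phi(\mu^j\nu^s u)$, using $\Phi(0)=\lim_{i\to\infty}F^{ki}(x_0,0)=(x_r,0)$. The first resulting term is at most $\|\Phi_j-\Phi\|_{\overline{D(0,\epsilon)}}$, which decays geometrically in $j$ by the uniform exponential convergence $\Phi_j\to\Phi$ on compact sets (Section \ref{section: Linearization maps}); the second is at most $C|\mu^j\nu^s u|^k\le C\epsilon^k|\mu|^{-k(j(s)-j)}$, since $\Phi$ is holomorphic of local order $k$ at the origin. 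Both bounds depend only on $j$, respectively on $j(s)-j$. I would then fix $\delta>0$ so small that for every $p\in\{0,\dots,k-1\}$ continuity of $F^p$ at its fixed point $(x_r,0)$, where $F_1^p(x_r,0)=x_r\in U$, gives $F_1^p\bigl(D((x_r,0),\delta)\bigr)\subseteq U$, and choose $J_0=J_0(\epsilon,U,K)$ with the sum of the two bounds below $\delta$ whenever $j\ge J_0$ and $j(s)-j\ge J_0$. Setting $C_9:=kJ_0$: any $n$ with $C_9\le n\le kj(s)-C_9$ has $J_0\le j=\lfloor n/k\rfloor\le j(s)-J_0$, hence $\Phi_j(\mu^j\nu^s u)=F^{kj}(\gamma(\nu^s u))\in D((x_r,0),\delta)$, and therefore $F_1^n(\gamma(\nu^s u))=F_1^p\bigl(F^{kj}(\gamma(\nu^s u))\bigr)\in U$. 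If $j(s)<2J_0$ the admissible range of $n$ is empty and there is nothing to prove.

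I expect the only genuinely delicate point to be the independence of $C_9$ from $s$: at time $k\,j(s)$ the orbit is $\Phi_{j(s)}$ evaluated at a point whose modulus may be comparable to $\epsilon$, so one must ensure the number of steps spent near $(x_r,0)$ is bounded below uniformly in $s$. The normalization $|\mu^{j(s)}\nu^s|\in(|\mu|^{-1},1]$ is exactly what delivers this: it yields $|\mu^j\nu^s u|\le|\mu|^{-(j(s)-j)}\epsilon$, so the error coming from $\Phi$ being non-constant sees $s$ only through $j(s)-j$, while the linearization error $\|\Phi_j-\Phi\|$ sees only $j$. Granting that, the remaining estimates are routine.
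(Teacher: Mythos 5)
Your proposal is correct and follows essentially the same route as the paper: write $n=kj+p$, use the identity $F^{kj}(\gamma(\nu^su))=\Phi_j(\mu^j\nu^su)$, bound the distance to the saddle $(x_r,0)$ by combining the uniform convergence $\Phi_j\to\Phi$ (controlled by $j$) with the smallness of $\Phi$ near $0$ (controlled by $j(s)-j$ via the normalization of $j(s)$), and then absorb the remaining $p<k$ iterates using the fixed point. The only cosmetic difference is that the paper pushes the last $v<k$ steps forward using the explicit bound $M$ on $|DF|$ (starting from distance $M^{-k}\rho$), whereas you invoke continuity of the finitely many maps $F^p$ at $(x_r,0)$; both are valid.
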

\begin{proof}
Pick $\rho >0$ such that $D(x_r,\rho) \subseteq U \cap D(0,R)$.
Fix $\theta$ such that $\Phi(D(0,\theta)) \subseteq D(x_r,
M^{-k}\rho/2)$. Write $n=ki+v$ with $0 \leq v <k$, and note that
$$
F^n\bigl(\gamma(\nu^s u)\bigr)=F^v\bigl(\Phi_i(\mu^i \nu^s u)\bigr).
$$
Assume that $i$ is small enough to have $|\mu^i \nu^s
\epsilon|<\theta$. On the other hand, assume that $i$ is large
enough to ensure that $||\Phi_i - \Phi||_{D(0,\epsilon)} <
M^{-k}\rho/2$. Then for $u \in D(0,\epsilon)$ we have
$\textrm{d}\left(\Phi_i(\mu^i \nu^s u),x_r\right) < M^{-k}\rho$.
As $x_r$ is a fixed point of $F$ and $v<k$, this implies that
$\textrm{d}\left(F^v(\Phi_i(\mu^i \nu^s u)),x_r\right) < \rho$ and
hence $F^n(\gamma(\nu^s u)) \in U$. Here, we used the fact that
$D(x_r,\rho) \subseteq D(0,R)$ and that $M$ is an upper bound for
$|DF|$ on $D(0,R) \times D(0,\epsilon)$.

As $|\mu^{j(s)}\nu^s|<1$, the requirements on $i$ can be
translated to $C_9 \leq n < kj(s)-C_9$ for some constant
$C_9(\epsilon,U)$.
\end{proof}

Combining Corollary \ref{cor:E full measure} with Lemma
\ref{lemma:kleine n} allows us to prove the slow escape of the points in the critical variety $K$:

\begin{prop}\label{prop:log n}
For any $u \in E_1(K)$ and $n \in \mathbb{N}$, we have
$$
\#\left\{s \leq n:~ F_1^{n-s}\bigl(\gamma(\nu^s u)\bigr) \notin W_0 \cup U \right\} \leq C_{10} \log n +C_{11},
$$
for constants $C_{10}(u,K,U,\epsilon)$ and
$C_{11}(u,K,U,\epsilon)$ independent of $n$.
\end{prop}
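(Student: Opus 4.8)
The plan is to superimpose the two escape mechanisms already established---escape into $W$ after $kj(s)+m(s)+1$ steps (Corollary~\ref{cor:E full measure}), and a preliminary stay inside $U$ for the intermediate iterates (Lemma~\ref{lemma:kleine n})---and to observe that between them they leave only a set of times $s$ of size $O(\log n)$ for which $F_1^{n-s}(\gamma(\nu^s u))$ can fail to lie in $W_0\cup U$. Fix $u\in E_1(K)$ and let $N_u$ be the threshold from Corollary~\ref{cor:E full measure}; enlarging $N_u$ by a bounded amount we may also assume $kj(s)>2C_9$ for all $s\ge N_u$, which is legitimate since $j(s)\to\infty$. The key structural observation is that $W=W_0\times D(0,\epsilon)$ is forward invariant (Section~\ref{section:outline}), so once an orbit enters $W$ its first coordinate remains in $W_0$ forever. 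Hence, for $s\ge N_u$: if $n-s\ge kj(s)+m(s)+1$ then $F^{n-s}(\gamma(\nu^s u))\in W$ by Corollary~\ref{cor:E full measure} and forward invariance, so $F_1^{n-s}(\gamma(\nu^s u))\in W_0$; and if $C_9\le n-s<kj(s)-C_9$ then $F_1^{n-s}(\gamma(\nu^s u))\in U$ by Lemma~\ref{lemma:kleine n}. Therefore, for $s\ge N_u$, the inequality $F_1^{n-s}(\gamma(\nu^s u))\notin W_0\cup U$ can hold only if $n-s<C_9$ or $kj(s)-C_9\le n-s<kj(s)+m(s)+1$.

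Next I count the bad times $s\le n$. The at most $N_u$ indices $s<N_u$, and the at most $C_9+1$ indices with $n-s<C_9$, contribute only to the additive constant. For the remaining case write $T(s):=s+kj(s)$, so that the condition becomes $T(s)\in(n-m(s)-1,\ n+C_9]$. From the definition $|\mu|^{-1}<|\mu|^{j(s)}|\nu|^s\le 1$ together with $|\nu|^l=|\lambda|$ one gets $\beta s-1<j(s)\le\beta s$, where $\beta:=-\log|\lambda|/(l\log|\mu|)>0$ is a fixed constant, so $T(s)=(1+k\beta)s+O(1)$ with $1+k\beta>1$. Thus the constraint confines $(1+k\beta)s$, and hence $s$, to an interval of length $(m(s)+O(1))/(1+k\beta)$. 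Any $s$ in this interval satisfies $s\le(n+C_9+O(1))/(1+k\beta)\le n$ (for $n$ large), so $m(s)$ may there be replaced by its uniform bound $m(s)\le C\log n+C'$, which is valid for all $s\le n$ since $m(s)=O(\log s)$. Consequently every bad $s$ of this kind lies in an interval of length at most $\frac{C}{1+k\beta}\log n+O(1)$, which contains at most $C_{10}\log n+O(1)$ integers with $C_{10}=C/(1+k\beta)$. Summing the three contributions gives the bound $C_{10}\log n+C_{11}$, the constant $C_{11}$ depending only on $N_u,C_9,k,\beta$---that is, on $u,K,U,\epsilon$; for the finitely many values of $n$ below the implied threshold the estimate is trivial.

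The argument is essentially bookkeeping, so I do not anticipate a genuine obstacle, but two points deserve care. First, exhaustiveness of the case analysis for small $s$: there $kj(s)-C_9$ may be non-positive, so Lemma~\ref{lemma:kleine n} is vacuous and Corollary~\ref{cor:E full measure} has not yet taken effect; however, all such $s$ are bounded by a constant and are harmlessly absorbed into $C_{11}$, which is exactly why the conclusion only asks for an additive constant. Second, and this is the real crux: one must verify that the ``window'' $[kj(s)-C_9,\ kj(s)+m(s)+1)$ has width that is \emph{logarithmic} in $s$ and not linear. This is precisely where the sublinear---indeed logarithmic---escape rate $m(s)$ guaranteed by Corollary~\ref{cor:E full measure} enters; a weaker escape estimate, say only $m(s)=o(s)$, would produce a linear and hence useless bound on the number of bad times.
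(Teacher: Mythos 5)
Your argument is correct and follows essentially the same route as the paper's proof: exclude $s\ge N_u$ with $n-s\ge kj(s)+m(s)+1$ via Corollary~\ref{cor:E full measure} and forward invariance of $W$, exclude $C_9\le n-s<kj(s)-C_9$ via Lemma~\ref{lemma:kleine n}, and then observe that the remaining window forces $s+kj(s)$, hence $s$ itself (since $j(s)$ grows linearly in $s$ with slope $\beta=\log|\lambda^{-1}|/(l\log|\mu|)$), into an interval of length $O(m(n))=O(\log n)$. The bookkeeping and the identification of the logarithmic bound on $m(s)$ as the crux match the paper exactly.
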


\begin{proof}
Write $S_n=\left\{s \leq n:~ F_1^{n-s}(\gamma(\nu^s u)) \notin W_0 \cup U \right\}$. By definition of $E_1(K)$ and the fact that
$F(W) \subseteq W$, we know that $s \notin S_n$ when $s$ satisfies
both $s \geq N_u$ and $n-s \geq kj(s)+m(s)+1$.

By Lemma \ref{lemma:kleine n} we know that $s \notin S_n$ when
$C_9 \leq n-s < kj(s)-C_9$. This leaves us with $2C_9+N_u$
possible entries in $S_n$, plus all values $s$ for which
$$
kj(s) \leq n-s <kj(s)+m(s)+1.
$$
Such a value of $s$ must also satisfy
\begin{align*}
n &\geq s+kj(s) &>n-m(n)-1,&\text{ and hence}\\
n &\geq s+k\left\lfloor \frac{s}{l}
\frac{\log|\lambda^{-1}|}{\log|\mu|} \right\rfloor
&>n-m(n)-1,&\text{
and }\\
n+k &\geq s\left(1+\frac{k}{l}
\frac{\log|\lambda^{-1}|}{\log|\mu|}\right) &>n-m(n)-1.&
\end{align*}
Plugging in our formula for $m(n)$ shows that this gives us a
sequence of at most
$$
\frac{k+1+\left\lceil \frac{2 \log n}{\log \alpha} -C_8
\right\rceil}{1+\frac{k}{l}
\frac{\log|\lambda^{-1}|}{\log|\mu|}}$$ consecutive values of $s$.
This proves the proposition.
\end{proof}

\subsection{Most critical points escape quickly}

Proposition \ref{prop:log n} and its proof showed that for each $u
\in E_1(K)$ and $n \in \mathbb{N}$, the set
$$
S_n(u)=\left\{s \leq n:~ F_1^{n-s}\bigl(\gamma(\nu^s u)\bigr) \notin W_0 \cup U \right\}
$$
could contain at most $C_{11}$ points, plus possibly some integers
in an interval of length $C_{10} \log n$. Very little information
about the dynamical system was used to prove this. In this subsection, we will show
that for almost all $u \in E_1(K)$, an arbitrarily large portion
of the values $s$ in any sufficiently large interval do not lie in
$S_n$.

\medskip

Recall that $j(s)$ is the unique natural number
such that
$$
 |\mu^{-1}|<|\mu^{j(s)} \nu^s |\leq 1.
$$
The sequence $\{\mu^{j(s)} \nu^s\}_{s \in \mathbb{N}}$ lives in
the annulus $\{|\mu^{-1}|<|t| \leq 1\}$. Multiplication by $\mu$
allows us to identify the inner and outer boundaries of the
annulus, giving us the torus $\left(\mathbb{C} \setminus
\{0\}\right)/\mu^{\mathbb{Z}}$. It will be more convenient to
think about this torus additively. Write
$$
T=\mathbb{C} / (2\pi i \mathbb{Z}+\log \mu \mathbb{Z}).
$$
It does not matter which value for $\log \mu$ we use, since $2 \pi
i$ is included in our lattice anyway. As $|\mu|>1$, we know that
$\log \mu$ is not purely imaginary. Composing a projection with
any branche of the logarithm, we obtain a holomorphic covering
map:
$$
\Psi \colon \quad \mathbb{C} \setminus \{0\}
\stackrel{\log}{\longrightarrow} \mathbb{C}
\stackrel{\pi}{\longrightarrow} \mathbb{C} / (2\pi i
\mathbb{Z}+\log \mu \mathbb{Z})=T.
$$
The projection $\pi$ ensures that the branche of the logarithm is
irrelevant. When restricted to $\{|\mu^{-1}|<|t| \leq 1\}$, the
map $\Psi$ is a bijection.

\medskip

Let $y$ be any value for $\log \nu$. Then in the torus $T$, the
sequence $\{\Psi(\mu^{j(s)} \nu^s)\}_{s \in \mathbb{N}}$ is
represented by $\{s y\}_{s \in \mathbb{N}}$. We recall the following lemma from Section 1.4 of \cite{KH1995}.

\begin{lemma}
Let $T$ be an additive torus and $y \in T$. Then the set $\overline{\{s y\}_{s \in \mathbb{N}}}$ is a subgroup of $T$, and one of the following statements must be true:
\begin{enumerate}
\item[(1)]{The set $\overline{\{s y\}_{s \in \mathbb{N}}}$ is finite.}
\item[(2)]{The set $\overline{\{s y\}_{s \in \mathbb{N}}}$ is
one-dimensional. It is a disjoint union of finitely many evenly
spaced lines/circles in $T$.}
\item[(3)]{The set $\overline{\{s y\}_{s \in \mathbb{N}}}$ equals $T$.}
\end{enumerate}
\end{lemma}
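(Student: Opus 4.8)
The plan is to show that $H:=\overline{\{sy\}_{s\in\mathbb N}}$ is precisely the closure $\overline{\langle y\rangle}$ of the cyclic subgroup generated by $y$, and then to invoke the classification of closed subgroups of a torus. The first step is to check that $H$ is a closed subgroup. Closedness is immediate. Compactness of $T$ together with the pigeonhole principle gives, for every $\delta>0$, integers $s<t$ with $\mathrm{d}(sy,ty)<\delta$, hence $\mathrm{d}((t-s)y,0)<\delta$; thus $0\in H$. If $a=\lim_k s_ky$ and $b=\lim_j t_jy$ lie in $H$, then $a+b$ is a limit of points $(s_k+t_j)y$, so $H+H\subseteq H$. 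Finally, for $a=\lim_j s_jy\in H$ choose, for each $j$, an integer $n_j>s_j$ with $\mathrm{d}(n_jy,0)<1/j$; then $n_j-s_j\geq 1$ and $(n_j-s_j)y\to-a$, so $-a\in H$. Hence $H$ is a closed subgroup containing $y$, and therefore $H=\overline{\langle y\rangle}$.

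The second step is to classify the closed subgroups of $T$. Write $T=\mathbb C/\Lambda$ with $\Lambda=2\pi i\mathbb Z+(\log\mu)\mathbb Z$, which is a rank-two lattice in $\mathbb C\cong\mathbb R^2$ because $\log\mu$ is not purely imaginary ($|\mu|>1$). Let $\widetilde H\subseteq\mathbb C$ be the full preimage of $H$ under the quotient map; it is a closed subgroup containing $\Lambda$. By the structure theorem for closed subgroups of $\mathbb R^2$, in suitable linear coordinates $\widetilde H=V\oplus D$ with $V$ a linear subspace and $D$ a discrete subgroup of a complementary subspace; since $\widetilde H$ contains the rank-two lattice $\Lambda$ it spans $\mathbb R^2$, so $\dim V+\mathrm{rank}\,D=2$. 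If $\dim V=2$ then $\widetilde H=\mathbb R^2$ and $H=T$: this is case (3). If $\dim V=0$ then $\widetilde H=D$ is a lattice containing $\Lambda$, so $H=D/\Lambda$ is finite: case (1). If $\dim V=1$, then $V\cap\Lambda$ has rank one (the image of $\Lambda$ in $\mathbb R^2/V\cong\mathbb R$ is discrete), so the identity component $H_0:=(V+\Lambda)/\Lambda$ is a circle in $T$; the quotient $H/H_0$ is a compact, totally disconnected group, hence finite, and it is generated by the image of $y$, hence cyclic of some order $N$. Consequently $H=\bigsqcup_{i=0}^{N-1}\bigl(iy+H_0\bigr)$ is a disjoint union of $N$ cosets of $H_0$; since $H/H_0$ sits inside the circle $T/H_0$ as its subgroup of order $N$, these cosets are $N$ parallel, evenly spaced circles in $T$: this is case (2). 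Together with the first step, which already gives that $\overline{\{sy\}_{s\in\mathbb N}}=H$ is a subgroup, this proves the lemma.

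I expect the genuine content to be the structure theorem for closed subgroups of $\mathbb R^2$ (equivalently, a direct hands-on classification of closed subgroups of the $2$-torus); this is classical and is exactly the input cited from \cite{KH1995}, so once it is invoked the remainder is bookkeeping. Within the self-contained portion, the only mildly delicate point is upgrading ``closure of the sub-semigroup $\{sy:s\geq1\}$'' to ``closed subgroup'' — that is, producing $0$ and additive inverses inside $H$ — and this is precisely where compactness of $T$ enters (hence the hypothesis $|\mu|>1$, which makes $\Lambda$ a full lattice and $T$ compact).
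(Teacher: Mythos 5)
Your proof is correct. Note that the paper does not prove this lemma at all: it simply recalls it from Section 1.4 of \cite{KH1995}, so there is no argument in the text to compare against. Your two-step route --- first upgrading the closure of the forward semigroup $\{sy\}_{s\geq 1}$ to the closed subgroup $\overline{\langle y\rangle}$ via compactness and pigeonhole, then classifying closed subgroups of $\mathbb{R}^2$ containing the lattice $\Lambda$ --- is the standard proof and all the steps check out; in particular you correctly observe that in the one-dimensional case $V\cap\Lambda$ must have rank one (otherwise $\widetilde H$ would not be closed), so only circles, not dense lines, can occur. The one point you pass over quickly is the existence, for each $j$, of $n_j>s_j$ with $\mathrm{d}(n_jy,0)<1/j$: knowing merely that some $m\geq 1$ has $my$ close to $0$ does not immediately give returns at arbitrarily large times. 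This is repaired by running the same pigeonhole argument on the multiples $\{kNy\}_{k\geq 1}$ for a fixed large $N$, producing $m\geq N$ with $\mathrm{d}(my,0)<\delta$; with that observation the proof is complete.
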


The torus $T$ can be covered by cosets
$[a]=a+\overline{\{s y\}_{s \in \mathbb{N}}}$ for $a \in T$.
Denote by $\sigma_a$ the counting measure, the
one-dimensional Lebesgue measure, or the two-dimensional Lebesgue
measure on $[a]$ for cases  (1), (2) or (3) respectively. The action of $x \mapsto x+y$ on $[a]$ is uniquely ergodic, hence we have:

\begin{lemma}\label{lemma:distributie}
Let $\eta >0$, $a \in T$, and $O \subseteq [a]$ open. Then there
exists $C_{12}(\eta,O,T)>0$ such that for all $r \in \mathbb{N}$
and $n \geq C_{12}$ we have
$$
\frac{1}{n} \# \left\{r < s \leq r+n: a+sy \in O \right\} > \frac{\sigma_a(O)}{\sigma_a([a])}-\eta.
$$
\end{lemma}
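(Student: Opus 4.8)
The plan is to deduce this from \emph{unique ergodicity} of the translation $\tau\colon x \mapsto x+y$ on the coset $[a]$, combined with a one–sided continuous approximation of the indicator $\mathbf 1_O$. First I would rewrite the quantity to be estimated as an ergodic average. Since $a + sy = (a+ry) + (s-r)y$, the substitution $t = s-r$ gives
$$
\# \left\{r < s \leq r+n: a+sy \in O \right\} = \sum_{t=1}^{n} \mathbf 1_O\bigl(x_r + ty\bigr), \qquad x_r := a + ry \in [a].
$$
Thus it suffices to bound $\tfrac1n \sum_{t=1}^n \mathbf 1_O(x+ty)$ from below \emph{uniformly over all base points $x \in [a]$}, for then the threshold works simultaneously for all $r$, since $x_r$ always lies in $[a]$.

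Next I would approximate $\mathbf 1_O$ from below by a continuous function. By inner regularity of $\sigma_a$ (a Lebesgue or counting measure on a compact set), I can choose a compact $K \subseteq O$ with $\sigma_a(O \setminus K) < \tfrac{\eta}{2}\,\sigma_a([a])$, and then by Urysohn's lemma a continuous $\phi\colon [a] \to [0,1]$ with $\mathbf 1_K \leq \phi \leq \mathbf 1_O$. Then $\int_{[a]} \phi \, d\sigma_a \geq \sigma_a(K) > \sigma_a(O) - \tfrac{\eta}{2}\sigma_a([a])$, and since $\mathbf 1_O \geq \phi$ pointwise,
$$
\frac1n \sum_{t=1}^{n} \mathbf 1_O(x+ty) \;\geq\; \frac1n \sum_{t=1}^{n} \phi(x+ty).
$$

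Now I would invoke unique ergodicity. The normalized measure $\sigma_a / \sigma_a([a])$ is the unique $\tau$–invariant probability measure on $[a]$ — this is precisely the assertion stated just before the lemma, valid in each of the three cases for $\overline{\{sy\}_{s\in\mathbb N}}$ — and it is classical (cf.\ Section 1.4 of \cite{KH1995}) that unique ergodicity is equivalent to \emph{uniform} convergence of Birkhoff averages of continuous functions: $\tfrac1n \sum_{t=0}^{n-1} \phi(\tau^t x) \to \tfrac{1}{\sigma_a([a])}\int_{[a]} \phi\, d\sigma_a$ uniformly in $x \in [a]$. (Shifting the index range from $0,\dots,n-1$ to $1,\dots,n$ changes the average by $\bigl(\phi(\tau^n x)-\phi(x)\bigr)/n$, which tends to $0$ uniformly.) Since the limit equals $\tfrac{1}{\sigma_a([a])}\int \phi\, d\sigma_a > \tfrac{\sigma_a(O)}{\sigma_a([a])} - \tfrac{\eta}{2}$, uniform convergence yields a constant $C_{12} = C_{12}(\eta, O, T)$ such that $\tfrac1n \sum_{t=1}^{n} \phi(x+ty) > \tfrac{\sigma_a(O)}{\sigma_a([a])} - \eta$ for every $n \geq C_{12}$ and every $x \in [a]$; specializing $x = x_r$ proves the lemma. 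The argument has essentially no obstacle: the only steps needing a little care are the passage from the indicator of an \emph{open} set to a subordinate continuous function (inner regularity plus Urysohn, which also forces the one–sided inequality to point the right way) and the bookkeeping that the convergence coming from unique ergodicity is genuinely uniform in the base point, so that $C_{12}$ is independent of $r$.
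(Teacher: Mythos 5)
Your proposal is correct and follows the same route the paper takes: the paper simply asserts that the translation on $[a]$ is uniquely ergodic and cites Section 4.3 of \cite{EW2011} for the conclusion, and your argument (continuous minorant of $\mathbf 1_O$ via inner regularity and Urysohn, plus uniform convergence of Birkhoff averages under unique ergodicity, with the base point $x_r=a+ry$ absorbing the uniformity in $r$) is precisely the standard derivation that citation refers to. No gaps.
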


See for example Section 4.3 in \cite{EW2011}.

\medskip

Since $\mathcal{J}_p$ is $F$-invariant and $\Phi$ satisfies the
equation $F^k \circ \Phi (t) = \Phi(\mu t)$, the set
$\Phi^{-1}(\mathcal{J}_p)$ must be invariant under multiplication
by $\mu$. By our assumptions $\mathcal{J}_p$ has measure zero, so
since $\Phi$ is a non-constant holomorphic function, the set
$\Phi^{-1}(\mathcal{J}_p)\cap D(0,1)$ must also have measure zero.
Note that $\Phi^{-1}(\mathcal{J}_p)$ is also closed. It follows
that $\Psi \left(\Phi^{-1}(\mathcal{J}_p)\right)$ is a compact set
of measure zero.

\medskip

For any $a \in T$, the set $\Psi
\left(\Phi^{-1}(\mathcal{J}_p)\right)\cap [a]$ will be a closed
subset. By Fubini's Theorem, it follows that $\sigma_a\left(\Psi
\left(\Phi^{-1}(\mathcal{J}_p)\right)\cap [a]\right)=0$ for almost
every $a \in T$. Denote the set of these values of $a$ by $A$.

Let $\eta >0$. For each $a \in A$, we can choose an open set
$O(a,\eta)\subseteq [a]$ such that $\sigma_a(O) \geq (1-\eta)\sigma_a([a])$ and
$\overline{O} \cap \Psi \left(\Phi^{-1}(\mathcal{J}_p)\right)=\emptyset$. By Lemma \ref{lemma:distributie} we can find
$C_{12}(a,\eta)$ such that
$$
\frac{1}{n}\# \left\{r<s \leq r+n: a+sy \in O \right\}> 1-2\eta
\quad \text { for all }r\in\mathbb{N},n\geq C_{12}.
$$
Define $E_2(K)=\{u \in D(0,\epsilon):u\neq 0 \text{ and } \Psi(u)
\in A\}$. As $A$ has full measure in $T$, the set $E_2(K)$ has
full measure in $D(0,\epsilon)$. For any $u \in E_2(K)$, $r \in
\mathbb{N}$, and $n\geq C_{12}$ we can translate the above
statement to:
$$
\frac{1}{n}\# \left\{r<s \leq r+n: \mu^{j(s)}\lambda^s u \in
\Psi^{-1}(\overline{O})\cap\{|\mu^{-1}u| \leq |t| \leq |u|\}
\right\}> 1-2\eta.
$$
The set $\Psi^{-1}(\overline{O})\cap\{|\mu^{-1}u| \leq |t| \leq
|u|\}$ is closed and bounded, hence compact. Its intersection with
$\Phi^{-1}(\mathcal{J}_p)$ is empty. We can therefore cover
$\Psi^{-1}(\overline{O})\cap\{|\mu^{-1}u| \leq |t| \leq |u|\}$ by
the increasing open sets $\{\Phi^{-1}(U_m)\}_{m \in \mathbb{N}}$.
By compactness, we can find $\ell=\ell(u,\eta)$ such that
$\Psi^{-1}(\overline{O})\cap\{|\mu^{-1}u| \leq |t| \leq
|u|\}\subseteq \Phi^{-1}(U_\ell)$.

\medskip

We can now prove the following proposition:

\begin{prop}\label{prop:dynamics upper bound}
For all $\eta>0$ and $u \in E_2(K)$, we have:
$$
\frac{1}{n}\# \left\{r<s \leq r+n: F^{kj(s)+\ell+1}\bigl(\gamma(\lambda^s u)\bigr) \in W \right\}>
1-2\eta,
$$
whenever $n \geq C_{12}$ and  $r \geq C_{13}$. Here
$\ell=\ell(u,\eta)$ and $C_{12}(u,\eta)$ are as defined above and
$C_{13}$ depends on $\ell$.
\end{prop}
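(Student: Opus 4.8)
The proof combines the equidistribution estimate established just above the statement with the uniform escape result of Corollary \ref{cor:j(s) en m}; essentially no new idea is needed, and the only mild subtlety is keeping track of the dependence of constants. Recall that by the choice of $\ell=\ell(u,\eta)$ we have
$$
\Psi^{-1}(\overline{O})\cap\{|\mu^{-1}u|\le|t|\le|u|\}\subseteq\Phi^{-1}(U_\ell),
$$
so that $\mu^{j(s)}\nu^s u$ lying in the left-hand set forces $\Phi(\mu^{j(s)}\nu^s u)\in U_\ell$. Consequently the equidistribution statement derived from Lemma \ref{lemma:distributie} reads
$$
\frac1n\#\bigl\{r<s\le r+n:\ \Phi(\mu^{j(s)}\nu^s u)\in U_\ell\bigr\}>1-2\eta
$$
for every $r\in\mathbb N$ and every $n\ge C_{12}(u,\eta)$.

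The point requiring a short argument is that Corollary \ref{cor:j(s) en m} may be applied with the \emph{fixed} value $m=\ell$ once $s$ is large. Since $|\mu|^{-1}<|\mu^{j(s)}\nu^s|$ and $|\nu|<1$, taking moduli and logarithms gives $j(s)>\frac{s\log|\nu^{-1}|}{\log|\mu|}-1$, so $j(s)\to\infty$ linearly in $s$. Hence there is a constant $C_{13}$, depending on $\ell$ (and therefore only on $u$ and $\eta$) but not on $r$ or $n$, such that $s\ge C_{13}$ implies $\ell<C_5 j(s)-C_6$. For such $s$, Corollary \ref{cor:j(s) en m} with $m=\ell$ shows that $\Phi(\mu^{j(s)}\nu^s u)\in U_\ell$ implies $F^{kj(s)+\ell+1}(\gamma(\nu^s u))\in W$.

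It then remains to assemble the pieces. Taking $r\ge C_{13}$ and $n\ge C_{12}$, every $s$ with $r<s\le r+n$ satisfies $s>r\ge C_{13}$, so the set $\{r<s\le r+n:\ F^{kj(s)+\ell+1}(\gamma(\nu^s u))\in W\}$ contains the set $\{r<s\le r+n:\ \Phi(\mu^{j(s)}\nu^s u)\in U_\ell\}$, whose proportion in $(r,r+n]$ exceeds $1-2\eta$. This gives the asserted inequality. I do not expect any genuine obstacle: the substantive content — equidistribution on the torus, the compactness argument producing $\ell$, and the uniform escape in Corollary \ref{cor:j(s) en m} — is already in place, and the linear growth of $j(s)$ in $s$ is precisely what allows $C_{13}$ to be chosen independently of $r$ and $n$.
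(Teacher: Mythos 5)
Your proposal is correct and follows essentially the same route as the paper: the paper's proof also starts from the equidistribution bound on $\#\{r<s\le r+n:\ \Phi(\mu^{j(s)}\nu^s u)\in U_\ell\}$ and then, for $r$ large, uses the uniform convergence $\Phi_{j(s)}\to\Phi$ together with Lemma \ref{lemma:Cirkelverlaten} (i.e.\ exactly the content of Corollary \ref{cor:j(s) en m} with $m=\ell$, which you cite directly) to conclude $F^{kj(s)+\ell+1}(\gamma(\nu^s u))\in W$. Your explicit verification that $j(s)$ grows linearly in $s$, so that $C_{13}$ can be chosen depending only on $\ell$, just makes precise the paper's phrase ``when $r$ is sufficiently large.''
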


\begin{proof}
Let $\eta>0$, $u \in E_2(K)$ and $n \geq C_{12}(u,\eta)$. We start
by the observation that
\begin{align*}
1-2\eta &< \frac{1}{n}\# \left\{r<s \leq r+n: \mu^{j(s)}\lambda^s u \in \Phi^{-1}(U_\ell) \right\}\\
&=\frac{1}{n}\# \left\{r<s \leq r+n: \Phi(\mu^{j(s)}\lambda^s u) \in U_\ell \right\}
\end{align*}
for all $r \in \mathbb{N}$. When $r$ is sufficiently large, $j(s)$
will be large enough to ensure that the latter set is included in
$$
\frac{1}{n}\# \left\{r<s \leq r+n:
d\left(\Phi_{j(s)}(\mu^{j(s)}\lambda^s u), U_\ell
\right)<C_1M^{-\ell} \right\},
$$
and by Lemma \ref{lemma:Cirkelverlaten}, this set is contained in the set
$$
\frac{1}{n}\# \left\{r<s \leq r+n: F^{\ell+1}\left(\Phi_{j(s)}(\mu^{j(s)}\lambda^s u)\right) \in W \right\}>
1-2\eta.
$$
Combining all this with the definition of $\Phi_{j(s)}$ proves the proposition.
\end{proof}

Define $E_3(K)=E_1(K) \cap E_2(K)$. This set still has full measure in $D(0,\epsilon)$, and we can combine Propositions \ref{prop:log n} and \ref{prop:dynamics upper bound} to
find:

\begin{corollary}\label{cor:E_3(K)}
Let $\eta>0$ and $u \in E_3(K)$. Then there exists a constant
$C_{14}$, depending on $\eta$, $u$, $U$, $\epsilon$ and $K$, such that for all $n
\in \mathbb{N}$ we have
$$
\#\left\{s \leq n:~ F_1^{n-s}\bigl(\gamma(\nu^s u)\bigr) \notin W_0 \cup U \right\} \leq \eta \log n +C_{14}.
$$
\end{corollary}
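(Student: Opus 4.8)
The plan is to re-run the argument of Proposition \ref{prop:log n} and, on the single block of ``uncertain'' indices $s$ that it produces, to trade the crude escape estimate coming from $E_1(K)$ (whose time-buffer $m(s)$ grows like $\log s$) for the density estimate of Proposition \ref{prop:dynamics upper bound} (whose buffer $\ell=\ell(u,\eta)$ is a \emph{fixed} constant, at the price of controlling only a $(1-2\eta)$-proportion of the indices). Fix $\eta>0$ and $u\in E_3(K)=E_1(K)\cap E_2(K)$, write $S_n=\{s\le n:~F_1^{n-s}(\gamma(\nu^s u))\notin W_0\cup U\}$, and set $c=\tfrac{k}{l}\tfrac{\log|\lambda^{-1}|}{\log|\mu|}>0$, so that $kj(s)=k\lfloor\tfrac{s}{l}\tfrac{\log|\lambda^{-1}|}{\log|\mu|}\rfloor$. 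In particular $s\mapsto s+kj(s)$ is strictly increasing with bounded increments, so the preimage under it of any interval of length $L$ is a block of consecutive integers whose length is comparable to $L$.

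As in the proof of Proposition \ref{prop:log n}, Lemma \ref{lemma:kleine n} together with the definition of $E_1(K)$ and $F(W)\subseteq W$ shows that, apart from at most $2C_9+N_u$ isolated indices, every $s\in S_n$ satisfies $kj(s)\le n-s<kj(s)+m(s)+1$, i.e.\ $s+kj(s)\in(n-m(s)-1,\,n]$; using $m(s)\le m(n)$, these $s$ form a single block $I_n$ of consecutive integers of length $O(m(n))=O(\log n)$. As $s$ runs through $I_n$, the buffer $\tau-kj(s)$, where $\tau=n-s$, decreases from roughly $m(s)$ down to $0$; hence $I_n':=\{s\in I_n:~\tau\ge kj(s)+\ell+1\}$ is a sub-block of $I_n$ whose complement in $I_n$ has only $O(\ell)$ elements, a number depending on $\eta$ and $u$ but not on $n$. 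For $s\in I_n'$ we have $n-s\ge kj(s)+\ell+1$, so if $s$ is ``good'' in the sense of Proposition \ref{prop:dynamics upper bound}, i.e.\ $F^{kj(s)+\ell+1}(\gamma(\nu^s u))\in W$, then $F^{n-s}(\gamma(\nu^s u))\in W$ by $F(W)\subseteq W$, and so $s\notin S_n$.

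Therefore $S_n\cap I_n'$ contains only the ``bad'' indices of Proposition \ref{prop:dynamics upper bound} that lie in $I_n'$. Writing $I_n'=(s_1,s_2]$, one has $s_1\to\infty$ (of size about $n/(1+c)$) and $s_2-s_1\to\infty$ (of size about $m(n)$), so for all large $n$ the hypotheses $s_1\ge C_{13}$ and $s_2-s_1\ge C_{12}$ are met and Proposition \ref{prop:dynamics upper bound} bounds the number of these bad indices by $2\eta(s_2-s_1)\le 2\eta|I_n|$. Combining the three contributions gives $\#S_n\le (2C_9+N_u)+O(\ell)+2\eta|I_n|$, and since $|I_n|=O(m(n))$ with $m(n)=\lceil\tfrac{2\log n}{-\log\alpha}-C_8\rceil$, this is of the form $\#S_n\le C'\eta\log n+C_{14}'$ for a uniform constant $C'$ and a constant $C_{14}'$ depending on $\eta,u,U,\epsilon,K$. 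Replacing $\eta$ by $\eta/C'$ throughout (legitimate because $\eta>0$ was arbitrary and $C_{14}$ may depend on $\eta$), and absorbing the finitely many small values of $n$ --- for each of which $\#S_n\le n<\infty$ --- into the constant, yields the claimed inequality $\#S_n\le\eta\log n+C_{14}$.

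I expect the only real difficulty to be organizational: one must verify that the three sources of control mesh along the time axis --- Lemma \ref{lemma:kleine n} for $C_9\le n-s<kj(s)-C_9$, the set $E_1(K)$ for $n-s\ge kj(s)+m(s)+1$ with $m(s)$ of size $O(\log s)$, and $E_2(K)$ via Proposition \ref{prop:dynamics upper bound} for the intermediate band using the \emph{fixed} buffer $\ell$ at the cost of a $2\eta$-density of exceptions --- so that the genuinely uncontrolled set of $s$ is indeed a single block of length $O(\log n)$ on which Proposition \ref{prop:dynamics upper bound} applies; the finitely many $n$ for which its hypotheses fail are harmless.
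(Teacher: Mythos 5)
Your proposal is correct and follows essentially the same route as the paper: reuse the decomposition from Proposition \ref{prop:log n} ($2C_9+N_u$ isolated indices plus one block of consecutive integers of length $O(\log n)$), then on that block invoke Proposition \ref{prop:dynamics upper bound} with the fixed buffer $\ell$ to bound the exceptional density, absorbing the boundary sub-band of length $O(\ell)$ and the finitely many small $n$ into the constant. The only cosmetic difference is that you apply the density proposition with $\eta$ and rescale at the end, whereas the paper picks $\theta$ with $2\theta C_{10}<\eta$ up front; these are equivalent.
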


\begin{proof}
As in the proof of Proposition \ref{prop:log n},
write
$$
S_n=\left\{s \leq n:~ F_1^{n-s}\bigl(\gamma(\nu^s u)\bigr) \notin W_0
\cup U \right\}.
$$
We showed that $S_n$ contains $2C_9+N_u$ entries, plus possibly the values $s$ for which
$$
kj(s) \leq n-s <kj(s)+m(n)+1.
$$
The length of this interval was bounded by $C_{10} \log n$, for a constant $C_{10}$ depending on $u$, $K$, $U$ and $\epsilon$.
Choose $\theta >0$ such that $2 \theta C_{10} < \eta$ and apply Proposition \ref{prop:dynamics upper bound}:

Whenever $n-s \geq kj(s)+\ell+1$ and $s \geq C_{13}$, no interval of length $L \geq C_{12}$ will contain more than $2\theta L$ elements of $S_n$.
We can now pick $C_{14}$
to be large enough to ensure that $\#S_n \leq \eta \log n +C_{14}$
for all $n \in \mathbb{N}$.
\end{proof}

\noindent {\bf Proof of Proposition \ref{prop:near-final critical
pointcount}.} Write $Q=D(0,\epsilon) \setminus E_3(K)$, a set of
measure zero. The set $Q^l=\{u^l : u \in Q\}$ still has measure
zero. Define $E_K=D(0,\epsilon^l) \setminus Q^l$. Then for each
$w \in E_K$ we have
\begin{align*}
\#\left\{s \leq n:~ \exists (z,\lambda^s w)\in K \text{ with
}F_1^{n-s}(z, \lambda^s w) \notin W_0 \cup U \right\} &= \\
\bigcup_{u: \; u^l=w} \#\left\{s \leq n:~ F_1^{n-s}\bigl(\gamma(\nu^s
u)\bigr) \notin W_0 \cup U \right\} & \leq l\eta \log n +C_4.
\end{align*}
for a constant $C_4=C_4(K,\eta,U,w)$. \hfill $\square$

\section{Hyperbolic areas of inverse images}\label{section:areas}

Only Proposition \ref{prop:area inv} remains to be shown. First
recall the following similar result from \cite{SL}.

\begin{lemma}\label{lemma:SL}
For any $0<r<1$ and any holomorphic proper map $f:V \rightarrow
\mathbb D$ of degree $d$, with $V$ simply connected, each
connected component of $f^{-1}(\overline{D_r}(0))$ has diameter at
most
$$
2d\log\left(\frac{1+r^{1/d}}{1-r^{1/d}}\right)
$$
with respect to the Poincar\'e metric on $V$.
\end{lemma}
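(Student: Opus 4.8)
The plan is to reduce to a finite Blaschke product and then exploit the multiplicative decomposition of its modulus. First I would note that $V$ must be hyperbolic: $f$ is non-constant and bounded (it maps into $\mathbb D$), and neither $\mathbb C$ nor $\widehat{\mathbb C}$ admits a non-constant bounded holomorphic function, so $V\not\cong\mathbb C$ and $V\not\cong\widehat{\mathbb C}$; being simply connected and hyperbolic, $V$ is biholomorphic to $\mathbb D$. Since the Poincaré metric is conformally invariant, I may assume $V=\mathbb D$ and that $f$ is a proper holomorphic self-map of $\mathbb D$ of degree $d$, which is a finite Blaschke product
$$
f=c\prod_{j=1}^{d}b_{a_j},\qquad b_a(z)=\frac{z-a}{1-\overline a z},\quad |c|=1,\ a_1,\dots,a_d\in\mathbb D ,
$$
so that $|f(z)|=\prod_{j=1}^{d}|b_{a_j}(z)|$, where $|b_{a_j}(z)|$ and the Poincaré distance $\rho_{\mathbb D}$ on $\mathbb D$ satisfy $\rho_{\mathbb D}(z,a_j)=\log\frac{1+|b_{a_j}(z)|}{1-|b_{a_j}(z)|}$.

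Next, fix the component $\Omega$ of $f^{-1}(\overline{D_r}(0))$, pick $\rho'\in(r,1)$, and let $\Omega'$ be the connected component of the open set $f^{-1}(D_{\rho'}(0))$ containing $\Omega$. Given $z,z'\in\Omega\subseteq\Omega'$, join them by a path $\gamma\colon[0,1]\to\Omega'$ (possible since $\Omega'$ is open and connected). The key point is that along $\gamma$ one has $\prod_{j=1}^{d}|b_{a_j}(\gamma(t))|=|f(\gamma(t))|<\rho'$, so the geometric mean of the numbers $|b_{a_j}(\gamma(t))|$ is $<(\rho')^{1/d}$; hence for each $t$ there is a $j$ with $|b_{a_j}(\gamma(t))|<(\rho')^{1/d}$. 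Writing $\delta'=\log\frac{1+(\rho')^{1/d}}{1-(\rho')^{1/d}}$, this says precisely that $\gamma(t)$ lies in the hyperbolic disk $B_j=\{w:\rho_{\mathbb D}(w,a_j)<\delta'\}$ for some $j$, so the connected set $\gamma([0,1])$ is contained in a single connected component of $\bigcup_{j=1}^{d}B_j$.

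It then remains to bound the hyperbolic diameter of a connected component of a union of at most $d$ hyperbolic disks of radius $\delta'$. If this component is $\bigcup_{j\in S}B_j$ with $|S|=k\le d$, then the intersection graph on $S$ (with $i\sim j$ when $B_i\cap B_j\neq\emptyset$) is connected; choosing a spanning tree and noting that each edge $i\sim j$ yields a point $w\in B_i\cap B_j$, whence $\rho_{\mathbb D}(a_i,a_j)<2\delta'$, and walking along the tree, one gets $\rho_{\mathbb D}(a_i,a_j)<2(k-1)\delta'$ for all $i,j\in S$; adding the two radii shows the component has diameter $<2k\delta'\le 2d\delta'$. Therefore $\rho_{\mathbb D}(z,z')<2d\delta'$, and letting $\rho'\downarrow r$ (using continuity of $\rho'\mapsto\delta'$) gives $\rho_{\mathbb D}(z,z')\le 2d\log\frac{1+r^{1/d}}{1-r^{1/d}}$, which is the asserted bound on $\operatorname{diam}(\Omega)$.

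I do not expect a serious obstacle: the heart of the argument is the elementary remark that inside the sublevel set $\{|f|\le r\}$ every point is within Poincaré distance $\log\frac{1+r^{1/d}}{1-r^{1/d}}$ of one of the $d$ zeros of $f$, combined with the trivial diameter estimate for a connected chain of at most $d$ equal disks. The only steps that need a little care are the reduction to a finite Blaschke product — uniformisation of $V$ together with the standard identification of degree-$d$ proper self-maps of $\mathbb D$ with degree-$d$ Blaschke products — and the passage from the open sublevel sets to the closed one, which the limit $\rho'\downarrow r$ handles.
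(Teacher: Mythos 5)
Your proof is correct and is essentially the standard argument for this lemma, which the paper does not prove itself but quotes from the reference \cite{SL}: uniformize $V$, write $f$ as a degree-$d$ Blaschke product, note that on the sublevel set $\{|f|\le r\}$ at least one factor has modulus at most $r^{1/d}$, so the set is covered by the $d$ hyperbolic disks of radius $\log\frac{1+r^{1/d}}{1-r^{1/d}}$ about the zeros, and bound the diameter of a connected chain of at most $d$ such disks. Your limiting step $\rho'\downarrow r$ is harmless but unnecessary, since a connected component of a finite union of closed hyperbolic disks is already the union of a subfamily whose intersection graph is connected, so the chain argument applies directly to the closed disks.
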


Our goal is to prove a similar estimate regarding hyperbolic areas
rather than diameters. Our strategy will be to remove small
neighborhoods of critical points, and prove that the area of the
inverse image outside of these neighborhoods must be small. We
first give an estimate on the proximity of a critical point for
points with small derivatives.

\begin{lemma}\label{lemma:distance}
If $f(z) \colon \mathbb{D} \rightarrow \mathbb{D}$ is a proper
holomorphic map of degree $d$ satisfying $f(0)=0$ and $|f'(0)|\leq
\delta^{d-1}$ with $\delta<1/4$, then $f$ has a critical point $c$
with $d_{\mathbb{D}}(c,0)\leq 32d\delta$.
\end{lemma}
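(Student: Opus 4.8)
The starting point is that every proper holomorphic self-map of $\mathbb{D}$ of degree $d$ is a finite Blaschke product, so I would write $f(z)=\lambda\prod_{j=1}^{d}\frac{z-a_j}{1-\bar a_j z}$ with $|\lambda|=1$, and, since $f(0)=0$, take $a_1=0$ and order the remaining zeros so that $|a_2|\le\cdots\le|a_d|$. (We may assume $d\ge 2$, since a degree-$1$ proper self-map fixing $0$ is a rotation and has no critical point in $\mathbb{D}$.) If $f'(0)=0$ then $c=0$ works, so assume $0$ is a simple zero. A one-line computation gives $|f'(0)|=\prod_{j=2}^{d}|a_j|$, so the hypothesis $|f'(0)|\le\delta^{d-1}$ forces $|a_2|^{d-1}\le\delta^{d-1}$; thus there is a zero $a:=a_2$ of $f$ with $|a|\le\delta$, and moreover $|a|\le|a_j|$ for every $j\ge 2$.

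The heart of the argument — and the one step I expect to require real care — is the estimate that $|f|<(3\delta)^{d}$ everywhere on the Euclidean segment $[0,a]$. My plan for this is to parametrize $z=ta$, $t\in[0,1]$, and to pair the factor coming from $a_1=0$ with the one coming from $a$: their product equals $\frac{t(1-t)|a|^{2}}{1-t|a|^{2}}\le\frac{4}{15}|a|^{2}$ (here $\delta<1/4$ is used), which is where the crucial quadratic factor $|a|^{2}$ appears. Each of the remaining $d-2$ factors $\frac{z-a_j}{1-\bar a_j z}$ I would bound crudely by $\frac{|a|+|a_j|}{1-\delta}\le\frac{8}{3}|a_j|$, the last inequality using the minimality $|a|\le|a_j|$. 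Multiplying, and then using $\prod_{j\ge 3}|a_j|=|f'(0)|/|a|\le\delta^{d-1}/|a|$ followed by $|a|\le\delta$, gives $|f(z)|\le\frac{4}{15}\bigl(\tfrac{8}{3}\bigr)^{d-2}\delta^{d}$ on $[0,a]$; since $8/3<3$, a short comparison confirms this is $<(3\delta)^{d}$ for every $d\ge 2$. The delicate points are exactly the pairing trick that yields $|a|^{2}$, the use of the minimal-modulus zero to convert $\prod_{j\ge3}|a_j|$ into $|f'(0)|$, and checking that all the crude constants (and the $d$-dependent factor, which comes out as $(8/9)^{d-2}\le 1$) actually close up under $\delta<1/4$.

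With this estimate in hand, the segment $[0,a]$ is a connected subset of $f^{-1}\!\bigl(D(0,(3\delta)^{d})\bigr)$ (note $(3\delta)^{d}<1$), so $0$ and $a$ lie in one connected component $\Omega$ of this open set. Then $f|_{\Omega}\colon\Omega\to D(0,(3\delta)^{d})$ is a proper holomorphic map whose degree is at least $2$, because the fibre over $0$ already contains the two distinct zeros $0$ and $a$. By Riemann--Hurwitz such a map must have a critical point $c$ in $\Omega$, and this is the critical point we are after.

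It then remains to bound $d_{\mathbb{D}}(c,0)$ by the hyperbolic diameter of $\Omega$. Since $\Omega$ is contained in a connected component of $f^{-1}\!\bigl(\overline{D_{(3\delta)^{d}}}(0)\bigr)$, Lemma~\ref{lemma:SL} (applied with the ambient simply connected domain taken to be $\mathbb{D}$ itself and $r=(3\delta)^{d}$) gives $\operatorname{diam}_{\mathbb{D}}(\Omega)\le 2d\log\frac{1+3\delta}{1-3\delta}=4d\operatorname{arctanh}(3\delta)$. Finally, using $\operatorname{arctanh}(x)\le x/(1-x^{2})$ together with $1-9\delta^{2}>7/16$ for $\delta<1/4$, this last quantity is at most $4d\cdot\frac{48}{7}\delta<32d\delta$, which finishes the proof. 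Apart from the segment estimate, every step here — the reduction to $f'(0)\ne 0$, the Riemann--Hurwitz count, and the closing $\operatorname{arctanh}$ computation — is routine.
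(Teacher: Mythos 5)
Your argument is correct and is essentially the paper's own proof: identify a zero $a$ of modulus at most $\delta$ from the product formula for $|f'(0)|$, pair the two zero factors to bound $|f|$ on the segment $[0,a]$ by $(C\delta)^d$, deduce a critical point in the component of the preimage of the small disk containing that segment, and bound its hyperbolic distance to $0$ via Lemma~\ref{lemma:SL}. The only differences are cosmetic choices of constants ($(3\delta)^d$ versus $\bigl(\tfrac{2\delta}{1-\delta}\bigr)^d$) and your explicit Riemann--Hurwitz justification of the critical point, which the paper leaves implicit.
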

\begin{proof}
Without loss of generality may write
$$
f(z)= z\prod_{i=1}^{d-1}\frac{z-a_i}{1-\bar{a}_iz}.
$$
We may assume that $|a_1| \leq |a_j|$ for $j \ge 2$. By our
assumption that $\delta^{d-1} \geq
|f'(0)|=|\prod_{i=1}^{d-1}a_i|$, it follows that $|a_1|<\delta$.
For $z=ta_1$ with $t \in [0,1]$ we have
\begin{align*}
f(z)&=z\prod_{i=1}^{d-1}\frac{z-a_i}{1-\bar{a}_iz}=z(z-a_1)\prod_{i=2}^{d-1}(z-a_i) \prod_{i=1}^{d-1}\frac{1}{1-\bar{a}_iz}, \; \textrm{ and hence}\\
%&\leq \frac{|a_1|^2}{4}\prod_{i=2}^{d-1}(|a_1|+|a_i|)\prod_{i=1}^{d-1}\frac{1}{1-|a_1|}\\
|f(z)| &\leq
\frac{|a_1|^2}{4}\prod_{i=2}^{d-1}(2|a_i|)\prod_{i=1}^{d-1}\frac{1}{1-\delta}
%\\&= \frac{2^{d-2}|a_1|}{4}\prod_{i=1}^{d-1}(|a_i|)\left(\frac{1}{1-\delta}\right)^{d-1}\\&
% \leq \frac{1-\delta}{16}\left(\frac{2\delta}{1-\delta}\right)^d.
 \leq \left(\frac{2\delta}{1-\delta}\right)^d.
\end{align*}

 Let $r=\left(\frac{2\delta}{1-\delta}\right)^d$. By the above estimate the set $f^{-1}\left(D(0,r)\right)$ contains the interval between $0$ and $a_1$. As both endpoints of
 this interval are mapped to $0$ and $D(0,r)$ is simply connected, the connected component of $f^{-1}\left(D(0,r)\right)$ that contains this interval must also contain a
 critical point $c$. By Lemma \ref{lemma:SL} the Poincar\'{e} distance from $c$ to $0$ can therefore be estimated by
$$
2d\log\left(\frac{1+r^{1/d}}{1-r^{1/d}}\right)=2d\log\left(\frac{1+\frac{2\delta}{1-\delta}}{1-\frac{2\delta}{1-\delta}}\right)\leq2d \cdot 4 \frac{2\delta}{1-\delta}\leq
32d\delta%=16d\frac{\delta}{1-\delta}.
$$
For the first inequality, we used our assumption that $\delta <
1/4$.
\end{proof}

Let $f\colon \mathbb D \rightarrow \mathbb D$ be a proper holomorphic
map of degree $d$. Suppose that $x \in \mathbb{D}$ satisfies
$d_{\mathbb{D}}(x,c_i) > 32d\delta$ for each critical point $c_i$.
Write $f(x)=y$ and define the functions
$g(z)=\frac{z+x}{1+\bar{x}z}$ and $h(z)=\frac{z-y}{1-\bar{y}z}$.
Since $g$ and $h$ preserve the Poincar\'{e} metric, the point
$0=g^{-1}(x)$ has Poincar\'e distance at least $32d\delta$ to any
critical point of $h\circ f \circ g$, and Lemma
\ref{lemma:distance} gives
$$
\delta^{d-1} \leq |(h\circ f \circ
g)'(0)|=|h'(y)||f'(x)||g'(0)|=\frac{1-|x|^2}{1-|y|^2}|f'(x)|.
$$

\begin{prop}\label{prop:area inv}
Let $f\colon \mathbb D \rightarrow \mathbb D$ be a proper holomorphic
map of degree $d$, and let $R \subseteq \mathbb{D}$ have
Poincar\'{e} area $A$. If $d\cdot A^{1/2d}<1/8$, the inverse image
of $R$ under $f$ will have Poincar\'{e} area at most $C_3d^3
A^{1/d}$ for a uniform constant $C_3$.
\end{prop}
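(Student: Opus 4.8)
The plan is to remove small hyperbolic disks around the critical points of $f$, estimate separately the hyperbolic area of $f^{-1}(R)$ outside those disks and the hyperbolic area of the disks themselves, and then choose the radius of the disks so as to balance the two contributions.

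Let $c_1,\dots,c_{d-1}$ be the critical points of $f$ in $\mathbb{D}$, listed with multiplicity (a proper holomorphic self-map of $\mathbb{D}$ of degree $d$ is a Blaschke product of degree $d$, so there are exactly $d-1$ of them). Fix $\delta\in(0,1/4)$ to be specified and set $\Omega=\mathbb{D}\setminus\bigcup_{i=1}^{d-1}B_{\mathbb{D}}(c_i,32d\delta)$, where $B_{\mathbb{D}}$ denotes a hyperbolic disk. The computation carried out immediately before the statement shows that for every $x\in\Omega$ the hyperbolic operator norm of $D_xf$ satisfies $\|D_xf\|=\frac{1-|x|^2}{1-|f(x)|^2}\,|f'(x)|\ge\delta^{\,d-1}$; that is, $f$ expands the hyperbolic metric on $\Omega$ by at least the factor $\delta^{\,d-1}$.

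First I would estimate $\mathrm{Area}_{\mathbb{D}}(f^{-1}(R)\cap\Omega)$. Since $f$ is proper of degree $d$, the change-of-variables formula gives $\int_{f^{-1}(R)}\|D_xf\|^2\,d\mathrm{Area}_{\mathbb{D}}(x)=d\cdot A$, the factor $d$ being the (generic) number of preimages of a point of $R$. Restricting the integral to $f^{-1}(R)\cap\Omega$ and using $\|D_xf\|^2\ge\delta^{\,2(d-1)}$ there yields $\mathrm{Area}_{\mathbb{D}}(f^{-1}(R)\cap\Omega)\le dA\,\delta^{-2(d-1)}$. For the excised part, $f^{-1}(R)\setminus\Omega\subseteq\bigcup_iB_{\mathbb{D}}(c_i,32d\delta)$, and since a hyperbolic disk of radius $\rho$ has area $2\pi(\cosh\rho-1)$ there is a universal constant $C_0$ with $\mathrm{Area}_{\mathbb{D}}(B_{\mathbb{D}}(c,\rho))\le C_0\rho^2$ for all $\rho\le 4$; hence, provided $32d\delta\le 4$, one gets $\mathrm{Area}_{\mathbb{D}}(f^{-1}(R)\setminus\Omega)\le (d-1)C_0(32d\delta)^2\le 32^2C_0\,d^3\delta^2$. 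Adding the two bounds, $\mathrm{Area}_{\mathbb{D}}(f^{-1}(R))\le dA\,\delta^{-2(d-1)}+32^2C_0\,d^3\delta^2$.

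Finally I would take $\delta:=(A/d^2)^{1/2d}$, so that $\delta^{2d}=A/d^2$. Then the first term becomes $dA\,\delta^{-2(d-1)}=d^3\delta^2=d^{\,3-2/d}A^{1/d}\le d^3A^{1/d}$, and similarly $32^2C_0\,d^3\delta^2\le 32^2C_0\,d^3A^{1/d}$, so the total is at most $(1+32^2C_0)\,d^3A^{1/d}$; this is the claim, with $C_3=1+32^2C_0$. The hypothesis $d\,A^{1/2d}<1/8$ is exactly what legitimizes this choice: it gives $\delta=A^{1/2d}d^{-1/d}<\tfrac{1}{8d}\le\tfrac18<\tfrac14$, so that Lemma \ref{lemma:distance}, and hence the expansion estimate, applies, and also $32d\delta<4$, so that the hyperbolic-disk area estimate applies. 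I expect the only real subtlety to be this balancing: one must simultaneously keep the expansion factor $\delta^{\,d-1}$ from becoming too small and the excised disks from becoming too large, and check that the single quantitative hypothesis on $A$ and $d$ controls both constraints uniformly in $d$ (the degenerate case $d=1$, where $\Omega=\mathbb{D}$ and $f$ is a hyperbolic isometry, being included automatically). Everything else is routine bookkeeping with the area formula for proper maps.
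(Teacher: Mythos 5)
Your proof is correct and takes essentially the same route as the paper's: the same excision of hyperbolic disks of radius $32d\delta$ around the critical points, the same lower bound $\delta^{d-1}$ on the hyperbolic derivative off those disks (via Lemma \ref{lemma:distance}), and the same two-term area estimate. The only differences are cosmetic --- you phrase the first estimate via the change-of-variables formula rather than summing over conformal branches, and you take $\delta=(A/d^2)^{1/2d}$ instead of the paper's $\delta=A^{1/2d}$, both of which yield the same $C_3d^3A^{1/d}$ bound under the hypothesis $dA^{1/2d}<1/8$.
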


\begin{proof}
Define $\delta=A^{1/2d}$ and let $S=\{z\in \mathbb{D}~|~
d_{\mathbb{D}}(z,c_i)>32d\delta \textrm{ for all } i \}$. Then
$f^{-1}(R) \subseteq f|_S^{-1}(R) \cup (\mathbb{D} \setminus S)$
and we can estimate the areas of these sets separately.

To estimate the area of $\mathbb{D} \setminus S$, note that the
hyperbolic area of a disk of hyperbolic radius $s$ is equal to
$4\pi \sinh^2(s/2)$, which is smaller than $4\pi s^2$ whenever
$s<4$. As $\mathbb{D}\setminus S$ is a union of $d-1$ disks of
hyperbolic radius $32d\delta<4$, we find
$$
\textrm{Area}_{\mathbb{D}}(\mathbb{D}\setminus S) \leq (d-1)4\pi
2^{10} d^2 \delta^2<2^{12} \pi d^3 \delta^2.
$$

To estimate $f|_S^{-1}(R)$, we use our estimate on the derivative.
For any open set $U \subseteq f|_S^{-1}(R)$ such that $f|_U$ is
conformal, we find
\begin{align*}
\textrm{Area}_{\mathbb{D}}(f(U))&=\iint \limits_{f(U)}\frac{4}{(1-|w|^2)^2}\,d\lambda(w)=\iint \limits_U \frac{4}{(1-|f(z)|^2)^2}|f'(z)|^2\,d\lambda(z) \\
& \geq \iint \limits_U
\frac{4}{(1-|f(z)|^2)^2}\frac{(1-|f(z)|^2)^2}{(1-|z|^2)^2}\delta^{2d-2}\,d\lambda(z)=\delta^{2d-2}
\textrm{Area}_{\mathbb{D}}(U).
\end{align*}

Since $f$ has topological degree $d$, we can therefore estimate
the Poincar\'{e} area of $f|_S^{-1}(R)$ by
$\frac{d}{\delta^{2d-2}} \cdot A=d\delta^2$. Combining this with
our estimate on the area of $S$ gives:
$$
\textrm{Area}_{\mathbb{D}}(f^{-1}(R))\leq d \delta^2+2^{12} \pi
d^3\delta^2 \leq C_3 d^3 A^{1/d}.
$$
\end{proof}

\end{document}